\newtheorem{theorem}{Theorem}[section]
\newtheorem{corollary}{Corollary}[section]
\newtheorem{lemma}{Lemma}[section]
\newtheorem{remark}{Remark}[section]
\newtheorem{proposition}{Proposition}[section]
\newtheorem{definition}{Definition}[section]
\newcounter{nextauthor}
\def\mathrm{\mbox}
\numberwithin{remark}{section}
\begin{document}
\title{{\bf  Time-inconsistent reinsurance and investment optimization problem with delay under random risk aversion}\thanks{This work was supported by the National Natural Science Foundation of China (12171339, 12471296, 12501438), the Natural Science Foundation of Sichuan Province, China (2026NSFSC0793), Chongqing Natural Science Foundation General Project, China (CSTB2025NSCQ-GPX0815), the Scientific and Technological Research Program of Chongqing Municipal Education Commission, China (KJQN202400819) and the grant from Chongqing Technology and Business University, China (2356004).}}
\author{Jian-hao Kang$^a$, Zhun Gou$^b$ and Nan-jing Huang$^c$ \thanks{Corresponding author: nanjinghuang@hotmail.com; njhuang@scu.edu.cn}\\
{\small a. School of Mathematics, Southwest Jiaotong University, Chengdu, Sichuan 610031, P.R. China}\\
{\small b.  School of Mathematics and Statistics, and Chongqing Key Laboratory of Statistical Intelligent}\\
{\small Computing and Monitoring, Chongqing Technology and Business University, Chongqing 400067, P.R. China}\\
{\small c. Department of Mathematics, Sichuan University, Chengdu, Sichuan 610064, P.R. China}}
\date{}
\maketitle \vspace*{-9mm}
\begin{abstract}
\noindent
This paper considers a newly delayed reinsurance and investment optimization problem incorporating random risk aversion, in which an insurer pursues maximization of the expected certainty equivalent of her/his terminal wealth and the cumulative delayed information of the wealth over a period. Specially, the insurer's surplus dynamics are approximated using a drifted Brownian motion, while the financial market is described by the constant elasticity of variance (CEV) model. Moreover, the performance-linked capital flow feature is incorporated and the wealth process is formulated via a stochastic delay differential equation (SDDE). By adopting a game-theoretic approach, a verification theorem with rigorous proofs is established to capture the equilibrium reinsurance and investment strategy along with the equilibrium value function. Furthermore, analytical or semi-analytical equilibrium reinsurance and investment strategies, together with their equilibrium value functions, are obtained under the CEV model for the exponential utility and derived under the Black-Scholes model for both exponential and power utilities. Finally, several numerical experiments are conducted to analyze the behavioral characteristics of the freshly-derived equilibrium reinsurance and investment strategy.
 \\ \ \\
\noindent {\bf Keywords}: Risk management; reinsurance and investment; delay; random risk aversion; time-inconsistency; equilibrium strategy.
\\ \ \\
\noindent \textbf{AMS Subject Classification:} 62P05, 91B30, 93E20, 91G10.
\end{abstract}

\section{Introduction}
Risk management has always been a crucial subject for insurers. Reinsurance serves as an efficient mean for insurers to manage claim-related risk exposure and safeguard themselves against catastrophic loss scenarios. Within a reinsurance contract, an insurer transfers a specified portion of claim losses to a reinsurer and remits the reinsurance premium for the risk transfer. Conversely, the reinsurer has the obligation to reimburse a portion of the claims that the insurer has incurred. Generally, the magnitude of reinsurance premium rises in line with the proportion of ceded claims, which compels the insurer to strike a balance between the reinsurance premium and ceded claims in order to attain the optimally achieved state of the reinsurance arrangement. Moreover, nowadays, insurers are actively engaged in investment pursuits within financial markets. Thus, it is also highly relevant to formulate suitable investment strategies to boost returns to the fullest extent or/and cut down risks to the minimum when dealing with different investment tools.

A substantial number of studies have been conducted on formulating optimal reinsurance and investment strategies for insurers. In these studies, many scholars have dedicated to expanding the previous works to include a variety of optimality criteria and distinct foundational models for the insurer's surplus dynamics and risky asset evolution. For example, Browne \cite{Browne1995} studied the optimal portfolio allocation problem for an insurer, taking into account the criteria of utility maximization and minimization of ruin probability separately; H{\o}jgaard and Taksar \cite{Hojgaard1998} aimed at enabling an insurer to pursue maximization of the return function for the proportional reinsurance problem; Yang and Zhang \cite{Yang2005} adopted a jump-diffusion model to capture an insurer's surplus process and investigated the optimal investment issue. Many more studies and outcomes associated with the reinsurance and investment problems are available in the literature (see, for instance, Bai and Guo \cite{Bai2008}, Elliott and Siu \cite{Elliott2011}, Li et al. \cite{Li2012}, Wang et al. \cite{Wang2024} and the references therein).

Within the realm of reinsurance and investment optimization research, the issue of time inconsistency will arise when the insurer's objective function contains the non-exponential discounting \cite{Wang2024} or the mean-variance criterion \cite{Li2012}, which suggests that the optimal strategy obtained at the current moment might cease to be optimal at a future point. In order to find time-consistent strategies for the problems of time inconsistency, scholars have attempted to utilize the framework of game theory to obtain equilibrium strategies, rather than optimal strategies in the traditional sense. For example, following the classical dynamic programming framework, Bj{\"{o}}rk et al. \cite{Bjork2017} along with Bj{\"{o}}rk and Murgoci \cite{Bjork2014} derived an extended Hamilton-Jacobi-Bellman (HJB) equation to describe the equilibrium solution for a general Markovian time-inconsistent control problem; Hu et al. \cite{Hu2012} defined the equilibrium in the context of open-loop control framework for a general time-inconsistent stochastic linear-quadratic control problem and got the equilibrium with the help of the stochastic maximum principle; Yong \cite{Yong2012} proposed the so-called equilibrium HJB equation in an attempt to devise the equilibrium strategy in the setting of a multi-person differential game featuring a hierarchical structure for a time-inconsistent optimal control problem. For more studies and findings regarding the time-inconsistent problems, we refer the interested readers to Balter et al. \cite{Balter2021}, Bensoussan et al. \cite{Bensoussan2022}, Bj{\"{o}}rk et al. \cite{Bjork2021}, Hu et al. \cite{Hu2017}, Strotz \cite{Strotz1955}, Yong \cite{Yong2017}, Zhang et al. \cite{Zhang2024} and the references therein.

One typical feature of the above researches is that these researches concentrated on the controlled state system in the absence of delay considerations. Specially, it was commonly supposed that the system's future state has no connection with past states and is governed exclusively by the present situation. Nevertheless, in real-world systems, insurers often take past information into account when formulating their reinsurance and investment strategies, as past information can help insurers understand the cyclical patterns of markets and enhance the robustness of decision-making. This feature is frequently termed as the memory feature or the delay feature. Chang et al. \cite{Chang2011} used the HJB equation to study a consumption and investment problem featuring bounded memory, where the return of the risky asset was associated with the historic performance. Then, Shen and Zeng \cite{Shen2014} pioneered the incorporation of bounded wealth memory into a reinsurance and investment problem subject to the mean-variance criterion, deriving the pre-committed strategy through the stochastic maximum principle. It was pointed out in \cite{Chang2011, Shen2014} that since the delayed optimal control problems are usually infinite-dimensional, a significant challenge emerges, namely, the non-existence of the explicit solutions. Therefore, some specific constraints on model parameters were given in \cite{Chang2011, Shen2014} to render the original problems finite-dimensional and solvable. Further research works and results related to delayed optimal control problems  (including delayed optimal reinsurance/investment problems) are well-documented in the literature (see, for example, A et al. \cite{A2018}, A and Shao \cite{A2020}, A et al. \cite{A2022}, Bai et al. \cite{Bai2022}, Elsanosi et al. \cite{Elsanosi2000}, Federico \cite{Federico2011}, Meng et al. \cite{Meng2025}, Yan and Wong \cite{Yan2022}, Yuan et al. \cite{Yuan2023} and the references therein).

It is worthy of note that the majority of aforementioned studies assumed that investors/insurers had a constant risk aversion coefficient. In reality, investors/insurers cannot easily determine their exact level of risk aversion. This has motivated extensive research on risk aversion estimation through diverse methodologies such as employing data from the market on labor income \cite{Chetty2006}, using data pertaining to implied and realized volatilities from the market \cite{Bollerslev2011} and examining survey forms of representative samples \cite{Burgaard2020}. Recently, Desmettre and Steffensen \cite{Desmettre2023} proposed that it is more reasonable to characterize the risk aversion coefficient of investors with a random variable and they formulated a new investment optimization problem in the Black-Scholes financial market by employing expected certainty equivalent, which is time inconsistent. They also provided a strict verification theorem to capture the equilibrium investment strategy, deriving (semi-)analytical strategies for both power and exponential utility specifications. More recently, based on the research of \cite{Desmettre2023}, Kang et al. \cite{Kang2026} considered an optimal reinsurance and investment problem incorporating random risk aversion in the financial market characterized by the Heston stochastic volatility model. They also presented the pseudo HJB equation to characterize the equilibrium reinsurance and investment strategy, from which the semi-analytical equilibrium strategy was obtained under the exponential utility.

To the best knowledge of the authors, there is currently no literature that adopts the method of expected certainty equivalent to study the delayed optimal reinsurance and investment problem under random risk aversion. Thus, this paper systematically addresses this gap by applying expected certainty equivalent to consider a new delayed optimal reinsurance and investment problem incorporating random risk aversion. In particular, under the diffusion approximation modeling of the insurer's surplus process, the insurer can cede risk exposure to a reinsurer by employing proportional reinsurance strategy or seek growth through new business expansion and can also optimize asset allocations in a financial market following the CEV model. Furthermore, it is hypothesized that capital inflows and outflows affect the insurer's present wealth, with the associated wealth dynamics formulated via an SDDE. Then, an approach based on expected certainty equivalent is employed to formulate the general delayed optimal reinsurance and investment problem considering random risk aversion that can be characterized as a combination of nonlinear functions of expectations.

The above newly established problem exhibits time inconsistency because Bellman's dynamic programming principle is inapplicable. In order to obtain the (semi-)analytical equilibrium reinsurance and investment strategy for such a problem, we identify the following challenges. Firstly, due to the introduction of delay, the methodologies in \cite{Desmettre2023, Kang2026} for providing the verification theorem are no longer directly applicable. In this paper, we draw on \cite{Elsanosi2000} to derive the It\^{o} formula under delay scenario, and then adopt the methodologies from \cite{Bjork2017, Bjork2014, Desmettre2023, Kang2026}, integrating them with the aforementioned It\^{o} formula to provide a verification theorem in delay context. Secondly, as discussed earlier, the inclusion of delay leads to the problem being infinite-dimensional. To overcome this difficulty, we follow \cite{Chang2011, Shen2014} to impose specific assumptions on model parameters to transform the problem into a finite-dimensional framework, which facilitates our derivation of (semi-)analytical equilibrium reinsurance and investment strategies under the exponential utility and the power utility. Finally, we find that only a small number of studies, such as \cite{Bai2022, Chang2011}, have proven the admissibility of their delayed optimal (or equilibrium) strategies and the conditions in their verification theorem. However, these proofs have certain limitations and cannot be directly applied to this paper. For example, substituting equations (56) and (57) into equation (12) in \cite{Chang2011}, we discover that the proof process of Lemma 2.2 is no longer applicable and thus $c^{*}(t)\geq 0$ in the proof of Theorem 4.2 cannot be easy to check. What is more, Bai et al. \cite{Bai2022} did not consider the verification of the integrability of the value function (see the assumption $(A1)$ in this paper). To address this issue, we utilize the properties of solutions to stochastic differential equations (SDEs) and the assumption of model parameters to complete the corresponding proofs. After overcoming these challenges, we would like to point out that the newly derived equilibrium reinsurance and investment strategies extend some results in \cite{A2018, A2020, Desmettre2023, Kang2026} and also arrive at several novel and interesting conclusions. For instance, the expressions of the equilibrium reinsurance strategy and the equilibrium investment strategy under the power utility function in the Black-Scholes financial market demonstrate that even under the independence assumption between financial and insurance markets, both strategies are unexpectedly influenced by the model parameters from the reinsurance and financial market setups.

The primary contributions of this paper can be outlined from four aspects: (i) a newly delayed optimal reinsurance and investment model with random risk aversion is established, which fully generalizes the model in \cite{Desmettre2023} and partially generalizes the model in \cite{Kang2026}, while also enriching the research on time-inconsistent problems under delay scenarios; (ii) a complete verification theorem is provided to capture the equilibrium reinsurance and investment strategy and the associated equilibrium value function; (iii) analytical or semi-analytical equilibrium reinsurance and investment strategies, together with their equilibrium value functions, are obtained under the CEV model for the exponential utility and derived under the Black-Scholes model for both exponential and power utilities, followed by the proofs of their admissibility and the validity of the verification theorem's conditions; (iv) computational experiments quantify parameter sensitivity in the equilibrium reinsurance and investment strategy, which yields novel and intriguing findings.

The subsequent sections of this paper are structured as outlined below. Section 2 details the model framework and sets forth the fundamental assumptions. Section 3 poses the delayed optimal reinsurance and investment problem under random risk aversion and gives a verification theorem capturing the equilibrium reinsurance and investment strategy along with the equilibrium value function. Section 4 derives the (semi-)closed-form solutions to equilibrium reinsurance and investment strategies under the CEV model for the exponential utility. Section 5 obtains (semi-)closed-form equilibrium reinsurance and investment strategies under the Black-Scholes model for exponential and power utilities separately. Section 6 presents numerical simulations to quantify how model parameters influence the equilibrium reinsurance and investment strategy. Section 7 offers a comprehensive summary of this study and Appendices include the detailed proofs.

\section{Model setup}
Consider a filtered probability space $(\Omega,\mathcal{F},\mathbb{F}:=\{\mathcal{F}_{t}\}_{t\in[0,T]},\mathbb{P})$ that fulfills the standard conditions of completeness and right-continuity, with $T >0$ representing a finite final time. Throughout the entire study, we consider every stochastic process as adapted to the given probability space. Furthermore, our analysis is based on the assumption of frictionless insurance and financial markets with continuous trading opportunities, where neither transaction costs nor taxes exist. Moreover, for an arbitrary positive integer $j$, we denote the set $\mathbb{S}^{j}_{\mathcal{F}}(0,T;\mathbb{R})$ as the family of $\mathcal{F}_{t}$-adapted
c\`{a}dl\`{a}g processes $Z(\cdot)$ satisfying $\mathbb{E}\left[\sup\limits_{0\leq t\leq T}|Z(t)|^{j}\right]<\infty$ and let the set $\mathbb{L}^{j}_{\mathcal{F}}(0,T;\mathbb{R})$ be defined as the class of $\mathcal{F}_{t}$-adapted stochastic processes $Z(\cdot)$ with the property that $\mathbb{E}\left[\int^{T}_{0}| Z(t)|^{j}\mathrm{d}t\right]<\infty$, and for any continuous function $\psi$ on $[-h,0]$, we denote by $\|\psi\|=\max\limits_{t\in[-h,0]}|\psi(t)|$ and $\psi_{s}(t)=\psi(s+t)$  for any $t\in[-h,0]$.

\subsection{Surplus process}
In scenarios without reinsurance or investment activities, the dynamics of the insurer's surplus are modeled within the traditional Crem$\acute{e}$r-Lundberg framework, which can be formally described as
\begin{equation*}\label{eq1}
  R(t)=\tilde{u}_{0}+c t-\sum\limits_{i=1}^{N(t)}Z_{i}.
\end{equation*}
Here, $\tilde{u}_{0}\geq0$ stands for the initial surplus, $c$ represents the premium rate, and the aggregate claims process is modeled as a compound Poisson process $\sum\limits_{i=1}^{N(t)}Z_{i}$, in which $\{N(t)\}_{t\in[0,T]}$ follows a homogeneous Poisson process with intensity $\lambda_{1}>0$ and counts claims in $[0,t]$, and random variables $\{Z_{i}\}_{i\in\mathbb{N}}$ are positive, independent and identically distributed and denote claim sizes, satisfying $\mathbb{E}[Z_{i}]=\mu_{1}>0$ and $\mathbb{E}[Z^{2}_{i}]=\mu_{2}>0$ as well as $\{Z_{i}\}_{i\in\mathbb{N}}$ and $\{N(t)\}_{t\in[0,T]}$ are independent of each other. Moreover, we postulate that the premium rate $c$ is set in accordance with the expected value principle, which takes the form $c=(1+\eta_{1})\lambda_{1}\mu_{1}$, where $\eta_{1}>0$ represents the safety loading factor of the insurer.

We further assume that the insurer's risk management strategy incorporates two operational modes through the risk retention level $q(t)$ at time $t$. For $q(t)\in[0,1]$, the insurer engages in proportional reinsurance, retaining $100q(t)\%$ portion of each claim while the reinsurer covers the remaining $100(1-q(t))\%$ portion.
It is postulated that the reinsurance premium is also determined in line with the expected value principle, which can be expressed as $(1+\eta_{2})(1-q(t))\lambda_{1}\mu_{1}$, where $\eta_{2}(\geq\eta_{1})$ represents the reinsurer's safety loading and $\eta_{2}\geq\eta_{1}$ ensures no arbitrage opportunities. Additionally, for $q(t)>1$, the insurer assumes a reinsurer's role by acquiring new business from other insurance providers, thereby expanding its risk and generating additional premium income. For notational convenience, we refer to the stochastic process $\{q(t)\}_{t\in[0,T]}$ as the reinsurance strategy hereafter. If the insurer adopts such a strategy, then the behavior of the insurer's surplus process can be characterized by
\begin{align}\label{eq2}
   R(t)&=\tilde{u}_{0}+\int^{t}_{0}\left[c-(1+\eta_{2})(1-q(s))\lambda_{1}\mu_{1}\right]\mathrm{d}s-\sum\limits_{i=1}^{N(t)}q(T_{i})Z_{i}\nonumber\\
   &=\tilde{u}_{0}+\int^{t}_{0}\left[\eta+(1+\eta_{2})q(s)\right]\lambda_{1}\mu_{1}\mathrm{d}s-\sum\limits_{i=1}^{N(t)}q(T_{i})Z_{i},
\end{align}
where $\eta=\eta_{1}-\eta_{2}\leq0$ and $T_{i}$ denotes the $i$-th claim's occurring time. Following the methodology established in \cite{Grandell1990}, we can approximate the surplus process $R(t)$ in \eqref{eq2} by using a diffusion model, which yields the following SDE
\begin{align*}
   \mathrm{d}R(t)=a[\eta+\eta_{2}q(t)]\mathrm{d}t+bq(t)\mathrm{d}W_{1}(t),
\end{align*}
where $a=\lambda_{1}\mu_{1}$, $b=\sqrt{\lambda_{1}\mu_{2}}$ and $W_{1}:=\{W_{1}(t)\}_{t\in[0,T]}$ is a standard $\mathbb{P}$-Brownian motion.

\subsection{Financial market}
The financial market under consideration comprises two primary investment instruments: a risk-free asset (e.g., bank deposit or government bond) and a risky asset (e.g., equity securities). The price dynamics of the risk-free asset, denoted by $S_{0}(t)$, evolve according to the subsequent ordinary differential equation (ODE)
\begin{equation*}\label{eq4}
  \mathrm{d}S_{0}(t)=rS_{0}(t)\mathrm{d}t, \quad S_{0}(0)=s_{0}>0,
\end{equation*}
where $r>0$ is a constant denoting the risk-free interest rate. In addition, the price process $S_{1}(t)$ of the risky asset is governed by the CEV model, which satisfies
\begin{equation*}\label{eq5}
  \mathrm{d}S_{1}(t)=\mu S_{1}(t)\mathrm{d}t+\sigma S^{\delta+1}_{1}(t)\mathrm{d}W_{2}(t), \quad S_{1}(0)=\bar{s}_{1}>0,
\end{equation*}
where $\mu$ and $\sigma S^{\delta}_{1}(t)$ characterize respectively the expected return rate and instantaneous volatility of $S_{1}(t)$ with $\mu>r$ and $\sigma>0$, $\delta>0$ denotes the elasticity parameter, and $W_{2}:=\{W_{2}(t)\}_{t\in[0,T]}$ is a standard $\mathbb{P}$-Brownian motion independent of $W_{1}$. We note that the CEV model degenerates into geometric Brownian motion when $\delta=0$.

\subsection{Wealth process with delay}
We hypothesize that the insurer's strategic decision-making involves a dual approach: transferring risks by means of proportional reinsurance or growing the business through new policy acquisitions, while simultaneously optimizing financial market investment portfolios. Let $\pi(t)$ represent the fraction of the insurer's wealth placed into the risky asset at time $t$, with $1-\pi(t)$ denoting the fraction allocated to the risk-free asset. Then a pair of stochastic processes $u(t):=(q(t),\pi(t))$ can be used to stand for the insurer's trading strategy and the wealth process $X^{u}(t)$ of the insurer can be expressed by the subsequent SDE
\begin{align*}
   \mathrm{d}X^{u}(t)&=\frac{(1-\pi(t))X^{u}(t)}{S_{0}(t)}\mathrm{d}S_{0}(t)+\frac{\pi(t)X^{u}(t)}{S_{1}(t)}\mathrm{d}S_{1}(t)+\mathrm{d}R(t)\\
   &=[X^{u}(t)\left(r+\pi(t)(\mu-r)\right)+a\eta+a\eta_{2}q(t)]\mathrm{d}t+bq(t)\mathrm{d}W_{1}(t)+\pi(t)X^{u}(t)\sigma S^{\delta}_{1}(t)\mathrm{d}W_{2}(t),
\end{align*}
where the wealth value at time 0 is $X^{u}(0)=x_{0}>0$.

Now, we consider the impact of historical performance on the wealth development of the insurer. As mentioned in \cite{Shen2014}, the historical trajectory of the insurer's wealth position influences capital movements, resulting in potential injections or withdrawals from the current wealth accumulation process. Similar to \cite{Chang2011, Federico2011, Shen2014}, we apply $M_{1}^{u}(t)$ and $M_{2}^{u}(t)$ to denote the cumulative and instantaneous delayed information of the insurer's wealth within the past horizon $[t-h,t]$, respectively, with these defined by
$M_{1}^{u}(t)=\int^{0}_{-h}e^{\alpha s}X^{u}(t+s)\mathrm{d}s$ and
$M_{2}^{u}(t)=X^{u}(t-h)$ for all $t\in[\tau,T]$, where $\tau\in[0,T]$ is an initial time, $h>0$ is a delay parameter and $\alpha\geq 0$ is an average parameter. Then the differential form of $M_{1}^{u}(t)$ can be derived as follows
\begin{align}\label{eq9}
\frac{\mathrm{d}M_{1}^{u}(t)}{\mathrm{d}t}=&\frac{\mathrm{d}}{\mathrm{d}t}\left[\int^{0}_{-h}e^{\alpha s}X^{u}(t+s)\mathrm{d}s\right]
=\frac{\mathrm{d}}{\mathrm{d}t}\left[\int^{t}_{t-h}e^{\alpha (\upsilon-t)}X^{u}(\upsilon)\mathrm{d}\upsilon\right]\nonumber\\
=&X^{u}(t)-e^{-\alpha h }X^{u}(t-h)-\alpha\int^{t}_{t-h}e^{\alpha (\upsilon-t)}X^{u}(\upsilon)\mathrm{d}\upsilon\nonumber\\
=&X^{u}(t)-e^{-\alpha h }X^{u}(t-h)-\alpha\int^{0}_{-h}e^{\alpha s}X^{u}(t+s)\mathrm{d}s\nonumber\\
=&X^{u}(t)-\alpha M_{1}^{u}(t)-e^{-\alpha h }M_{2}^{u}(t).
\end{align}
Moreover, we adopt the function $f(t, X^{u}(t)-M_{1}^{u}(t), X^{u}(t)-M_{2}^{u}(t))$ to stand for the amount of injections/withdrawals, where $X^{u}(t)-M_{1}^{u}(t)$ measures the average performance of the insurer's wealth over the period $[t-h,t]$ and $X^{u}(t)-M_{2}^{u}(t)$ represents the absolute performance during the same time interval.

To ensure the solvability of the optimization problem, inspired by previous studies (see, for example, \cite{Chang2011, Shen2014}), we adopt the assumption that the magnitude of injections/withdrawals is determined by a linear combination of average and absolute performance of the insurer's wealth with respective weights applied, which is given by
\begin{equation*}\label{eq910}
f(t, X^{u}(t)-M_{1}^{u}(t), X^{u}(t)-M_{2}^{u}(t))=B(X^{u}(t)-M_{1}^{u}(t))+C(X^{u}(t)-M_{2}^{u}(t)),
\end{equation*}
where $B$ and $C$ are nonnegative constants.

Next, taking account of the amount of injections or withdrawals, the wealth process of the insurer $X^{u}(t)$ can be rewritten via the subsequent SDDE
\begin{align}\label{eq11}
   \mathrm{d}X^{u}(t)=&[X^{u}(t)\left(r+\pi(t)(\mu-r)\right)+a\eta+a\eta_{2}q(t)]\mathrm{d}t+bq(t)\mathrm{d}W_{1}(t)+\pi(t)X^{u}(t)\sigma S^{\delta}_{1}(t)\mathrm{d}W_{2}(t)\nonumber\\
   &-f(t, X^{u}(t)-M_{1}^{u}(t), X^{u}(t)-M_{2}^{u}(t))\mathrm{d}t\nonumber\\
   =&[X^{u}(t)\left(A+\pi(t)(\mu-r)\right)+BM_{1}^{u}(t)+CM_{2}^{u}(t)+a\eta+a\eta_{2}q(t)]\mathrm{d}t\nonumber\\
   &+bq(t)\mathrm{d}W_{1}(t)+\pi(t)X^{u}(t)\sigma S^{\delta}_{1}(t)\mathrm{d}W_{2}(t),
\end{align}
where $A=r-B-C$. It can be seen from the equation \eqref{eq11} that $f>0$ means capital withdrawal and $f<0$ implies capital injection. Based on the expression of $f$, it can be deduced that if $X^{u}(t)>M_{1}^{u}(t)$ and $X^{u}(t)>M_{2}^{u}(t)$, then $f>0$. Actually, if the previous performance is good, which results in that the current wealth exceeds the cumulative delayed information of the wealth for the insurer, then she/he may utilize a part of the gain to distribute dividends to her/his shareholders and other interested parties or offer incentive payments to her/his management. This is the scenario of capital withdrawal. On the other hand, if $X^{u}(t)<M_{1}^{u}(t)$ and $X^{u}(t)<M_{2}^{u}(t)$, then $f<0$. In reality, if the previous performance is bad, which leads to that the current wealth falls below the cumulative delayed information of the wealth for the insurer, then she/he may employ her/his reserves as an injection of capital or raise funds from the capital market to compensate for the loss, ensuring that the ultimate performance target remains achievable. This is tantamount to a capital injection.

We suppose that the initial condition for $X^{u}(t)$ is its historical performance from $\tau-h$ to $\tau$ formulated as $X^{u}(t)=\psi(t-\tau)$ for $t\in[\tau-h,\tau]$, where $\psi>0$ is a continuous function on $[-h,0]$. If $\tau=0$, then we suppose that the insurer's initial wealth is constant, namely, for any $t\in[-h,0]$, $X^{u}(t)=\psi(t)=x_{0}$, which implies that the insurer enters the market with initial capital $x_{0}$, commencing all business activities ((re)insurance and investment) precisely at time $t=0$. Therefore, the initial condition for $M_{1}^{u}(t)$ is $M_{1}^{u}(0)=\frac{x_{0}(1-e^{-\alpha h})}{\alpha}$. Furthermore, we adopt the following notations for convenience. For $t\in[-h,0]$, denote $M_{1}^{u}(t)=m_{10}$ and $M_{2}^{u}(t)=m_{20}$. For $\tau\in[0,T]$ and $t\in[\tau-h,\tau]$, denote $S_{1}(t)=s_{1}$, $X^{u}(t)=x$, $M_{1}^{u}(t)=m_{1}$ and $M_{2}^{u}(t)=m_{2}$.

\section{Problem formulation and verification theorem}
This section will establish a theoretical framework for optimizing delayed reinsurance and investment decisions under random risk aversion. We will first formalize the optimization problem and then derive a verification theorem to characterize equilibrium reinsurance and investment strategies via game theory.

Among existing studies, the objective function of reinsurance and investment optimization problems with delay incorporates both the insurer's terminal wealth $X^{u}(T)$ and the cumulative delayed information of the wealth over the period $[T-h,T]$, i.e., $M_{1}^{u}(T)$. Especially, the corresponding reinsurance and investment optimization problem is presented as follows
\begin{equation}\label{eq12}
  \widehat{V}(t,x,s_{1},m_{1},m_{2}):=\sup\limits_{u} \; \mathbb{E}_{t}\left[\varphi\left(X^{u}(T)+\beta M_{1}^{u}(T)\right)\right],
\end{equation}
where $\varphi:\mathbb{R}\rightarrow \mathbb{R}$ stands for a utility function, the operator $\mathbb{E}_{t}[\cdot]$ computes the $\mathcal{F}_{t}$-conditional expectation under the probability measure $\mathbb{P}$ and the weight of $M_{1}^{u}(T)$ is determined by the constant $\beta\geq 0$, which quantifies the influence of $M_{1}^{u}(T)$ on the final performance. It is widely known that if a reinsurance-investment strategy $u$ attains the supremum value in problem \eqref{eq12}, then it is deemed an optimal strategy. Therefore, the insurer's optimization problem can be reformulated as maximizing the certainty equivalent of $X^{u}(T)$ and $M_{1}^{u}(T)$, which is mathematically equivalent to the following reward functional maximization
\begin{equation}\label{eq13}
  \widehat{J}^{u}(t,x,s_{1},m_{1},m_{2}):=\varphi^{-1}\left(\mathbb{E}_{t}\left[\varphi\left(X^{u}(T)+\beta M_{1}^{u}(T)\right)\right]\right),
\end{equation}
where $\varphi^{-1}(\cdot)$ is the inverse function of $\varphi(\cdot)$.

Regarding the reward functional \eqref{eq13}, we adopt the framework of \cite{Desmettre2023, Kang2026} by modeling the utility function's risk aversion coefficient $\gamma$ as a random variable. Building upon the certainty equivalent maximization principle for the terminal wealth and the cumulative delayed performance under random risk aversion, our objective becomes the optimization of the subsequent reward functional
\begin{equation}\label{eq14}
  J^{u}(t,x,s_{1},m_{1},m_{2}):=\int(\varphi^{\gamma})^{-1}\left(\mathbb{E}_{t}\left[\varphi^{\gamma}\left(X^{u}(T)+\beta M_{1}^{u}(T)\right)\right]\right)\mathrm{d}\Gamma(\gamma).
\end{equation}
Here, the random variable characterized by $\gamma$ has $\Gamma$ as its cumulative distribution function and the integration is performed throughout the region $supp(\Gamma)=\{\gamma \in(0,\infty)|\;\Gamma(\gamma)>0\}$. We would like to point out that we employ the superscript notation $\varphi^{\gamma}$ to explicitly denote the functional dependence of $\varphi$ on the risk aversion parameter $\gamma$. To simplify notation in subsequent analysis, we introduce the following symbolic conventions:
\begin{equation}\label{eq15}
  y^{u,\gamma}(t,x,s_{1},m_{1},m_{2}):=\mathbb{E}_{t}\left[\varphi^{\gamma}\left(X^{u}(T)+\beta M_{1}^{u}(T)\right)\right]
\end{equation}
and $\iota^{\gamma}(y):=\frac{\mathrm{d}}{\mathrm{d} y}(\varphi^{\gamma})^{-1}(y)$. Thus, the reward functional \eqref{eq14} can be expressed alternatively as
\begin{equation}\label{eq17}
  J^{u}(t,x,s_{1},m_{1},m_{2})=\int(\varphi^{\gamma})^{-1}\left(y^{u,\gamma}(t,x,s_{1},m_{1},m_{2})\right)\mathrm{d}\Gamma(\gamma)
\end{equation}
and then the insurer aims to achieve the supremum of functional \eqref{eq17}.

From equations \eqref{eq15} and \eqref{eq17}, we observe that the equation \eqref{eq17} can be regarded as a summation of nonlinear functions dependent on expectations, leading to time-inconsistency in the optimization problem. To address the time-inconsistency issue, we build upon the studies of \cite{Bjork2017, Bjork2021, Desmettre2023, Kang2026, Yuan2023} to pursue the time-consistent equilibrium strategy for reinsurance and investment by employing a game-theoretic framework. To begin with, letting $\mathcal{Q}:=[0,T]\times\mathcal{O}$ with $\mathcal{O}:=\mathbb{R}\times[0,\infty)\times\mathbb{R}\times\mathbb{R}$, we formally define the class of admissible reinsurance and investment strategies as follows.
\begin{definition}\label{definition3.1}
A reinsurance and investment strategy $u(t)=(q(t),\pi(t))$ is admissible on $[0,T]$ if it satisfies the following conditions:
\begin{itemize}
\item[(i)] for each initial point $(t,x,s_{1},m_{1},m_{2})\in\mathcal{Q}$, the SDDE \eqref{eq11} possesses a unique strong solution $X^{u}$;
\item[(ii)] $\{u(t)\}_{t\in[0,T]}$ is $\mathcal{F}_{t}$-progressively measurable and continuous, $\mathbb{E}\left[\int^{T}_{0}\left(q^{2}(t)+\pi^{2}(t)(X^{u}(t))^{2}(S^{\delta}_{1}(t))^{2}\right)\mathrm{d}t\right]<\infty$ and $q(t)\geq0$;
\item[(iii)] $\int\left|(\varphi^{\gamma})^{-1}\left(\mathbb{E}_{t}\left[\varphi^{\gamma}(X^{u}(T)+\beta M_{1}^{u}(T))\right]\right)\right|\mathrm{d}\Gamma(\gamma)<\infty$.
\end{itemize}
We denote by $\Pi$ the collection of all admissible reinsurance and investment strategies.
\end{definition}

Under the assumption $(i)$ of Definition \ref{definition3.1}, when the solution to the SDDE \eqref{eq11} belongs to $\mathbb{S}^{2}_{\mathcal{F}}(0,T;\mathbb{R})$, we can immediately obtain the subsequent results.

\begin{lemma}\label{lemma3.2}
If $X^{u}(\cdot)\in\mathbb{S}^{2}_{\mathcal{F}}(0,T;\mathbb{R})$, then $X^{u}(\cdot),M_{1}^{u}(\cdot),M_{2}^{u}(\cdot)\in\mathbb{S}^{1}_{\mathcal{F}}(0,T;\mathbb{R})$ and $X^{u}(\cdot),M_{1}^{u}(\cdot),M_{2}^{u}(\cdot)\in\mathbb{L}^{1}_{\mathcal{F}}(0,T;\mathbb{R})$.
\end{lemma}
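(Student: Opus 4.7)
The plan is to reduce every claim to elementary domination bounds, combining the hypothesis $\mathbb{E}[\sup_{0\le t\le T}|X^u(t)|^2]<\infty$ with the boundedness of the deterministic initial segment, namely $\|\psi\|=\max_{s\in[-h,0]}|\psi(s)|<\infty$ (which holds because $\psi$ is continuous on the compact interval $[-h,0]$, as stated in Section 2). First I would dispose of $X^u$ itself: since $[0,T]$ is bounded, the Cauchy–Schwarz inequality gives
\[
\mathbb{E}\Big[\sup_{0\le t\le T}|X^u(t)|\Big]\le\Big(\mathbb{E}\Big[\sup_{0\le t\le T}|X^u(t)|^2\Big]\Big)^{1/2}<\infty,
\]
so $X^u\in\mathbb{S}^1_{\mathcal{F}}(0,T;\mathbb{R})$.

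Next I would handle $M_2^u(t)=X^u(t-h)$ by splitting $[0,T]$ into $[0,h]$ and $[h,T]$. On $[0,h]$ the shift $t-h$ lies in $[-h,0]$, so $X^u(t-h)=\psi(t-h)$ is dominated pointwise by $\|\psi\|$; on $[h,T]$ the shift lies in $[0,T-h]\subset[0,T]$, so $|X^u(t-h)|\le\sup_{0\le s\le T}|X^u(s)|$. Hence
\[
\sup_{0\le t\le T}|M_2^u(t)|\le\|\psi\|+\sup_{0\le s\le T}|X^u(s)|,
\]
whose expectation is finite by the previous step. For $M_1^u(t)=\int_{-h}^{0}e^{\alpha s}X^u(t+s)\,ds$, the hypothesis $\alpha\ge 0$ together with $s\le 0$ gives $e^{\alpha s}\le 1$, so, splitting the integrand again according to whether $t+s\in[-h,0]$ or $t+s\in[0,T]$,
\[
|M_1^u(t)|\le\int_{-h}^{0}|X^u(t+s)|\,ds\le h\Big(\|\psi\|+\sup_{0\le r\le T}|X^u(r)|\Big),
\]
which shows $M_1^u\in\mathbb{S}^1_{\mathcal{F}}(0,T;\mathbb{R})$. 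Adaptedness and the càdlàg property transfer automatically: $M_2^u$ is a time shift of $X^u$ (continuous on $[0,h]$ by continuity of $\psi$), and $M_1^u$ is continuous in $t$ by dominated convergence.

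Finally, all three $\mathbb{L}^1_{\mathcal{F}}$ inclusions are a free consequence of the $\mathbb{S}^1_{\mathcal{F}}$ statements via the elementary bound
\[
\mathbb{E}\Big[\int_0^T|Z(t)|\,dt\Big]\le T\,\mathbb{E}\Big[\sup_{0\le t\le T}|Z(t)|\Big],
\]
valid on the bounded horizon $[0,T]$. No step is a genuine obstacle; the proof is essentially a bookkeeping exercise, and the only point requiring a moment's care is the splitting of the time domain for $M_1^u$ and $M_2^u$ to absorb the deterministic history $\psi$ on $[-h,0]$.
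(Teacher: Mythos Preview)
Your proposal is correct and follows essentially the same approach as the paper's own proof: both use Cauchy--Schwarz to pass from $\mathbb{S}^2$ to $\mathbb{S}^1$ for $X^u$, then dominate $M_1^u$ and $M_2^u$ by the sum of the deterministic initial datum and $\sup_{[0,T]}|X^u|$, and finally obtain the $\mathbb{L}^1$ inclusions from the $\mathbb{S}^1$ ones via the trivial bound $\int_0^T|Z|\,dt\le T\sup|Z|$. Your write-up is in fact slightly more careful than the paper's in that you make the domain splitting explicit and note the adaptedness/c\`adl\`ag properties.
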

\begin{proof}
See Appendix A.
\end{proof}

Then following game-theoretic ways, we will define the equilibrium reinsurance and investment strategy.
\begin{definition}\label{definition3.2}
For an arbitrary initial point $(t,x,s_{1},m_{1},m_{2})\in\mathcal{Q}$, denote an admissible reinsurance and investment strategy by $\hat{u}(t,x,s_{1},m_{1},m_{2})=(\hat{q}(t,x,s_{1},m_{1},m_{2}),\hat{\pi}(t,x,s_{1},m_{1},m_{2}))$. Consider arbitrary parameters $q\geq0$, $\pi\in\mathbb{R}$ and $\varepsilon>0$, then define a reinsurance and investment strategy through
\begin{align*}
\left(q_{\varepsilon}(s,\widetilde{x},\tilde{s}_{1},\widetilde{m}_{1},\widetilde{m}_{2}),\pi_{\varepsilon}(s,\widetilde{x},\tilde{s}_{1},\widetilde{m}_{1},\widetilde{m}_{2})\right)=
\begin{cases}
(q,\pi), \quad\quad\quad\quad\quad\;\quad\quad\;\quad\quad\quad\quad\quad\quad\quad\;\;\text{for} \quad\, t\leq s < t+\varepsilon,\\
(\hat{q}(s,\widetilde{x},\tilde{s}_{1},\widetilde{m}_{1},\widetilde{m}_{2}),\hat{\pi}(s,\widetilde{x},\tilde{s}_{1},\widetilde{m}_{1},\widetilde{m}_{2})), \quad\text{for}\;\quad  t+\varepsilon\leq s < T,
\end{cases}
\end{align*}
where $(\widetilde{x},\tilde{s}_{1},\widetilde{m}_{1},\widetilde{m}_{2})\in \mathcal{O}$.
Let $u_{\varepsilon}(s,\widetilde{x},\tilde{s}_{1},\widetilde{m}_{1},\widetilde{m}_{2}):=(q_{\varepsilon}(s,\widetilde{x},\tilde{s}_{1},\widetilde{m}_{1},\widetilde{m}_{2}),\pi_{\varepsilon}(s,\widetilde{x},\tilde{s}_{1},\widetilde{m}_{1},\widetilde{m}_{2}))$. If $$\underset{\varepsilon\rightarrow0}\liminf\frac{J^{\hat{u}}(t,x,s_{1},m_{1},m_{2})-J^{u_{\varepsilon}}(t,x,s_{1},m_{1},m_{2})}{\varepsilon}
\geq0
$$
for all $(q,\pi)\in\Pi$, then $\hat{u}(t,x,s_{1},m_{1},m_{2})$ is defined as an equilibrium reinsurance and investment strategy, with its associated equilibrium value function $V(t,x,s_{1},m_{1},m_{2}):=J^{\hat{u}}(t,x,s_{1},m_{1},m_{2})$.
\end{definition}

Pursuant to Definition \ref{definition3.2}, the insurer seeks to maximize the functional \eqref{eq17} by determining the equilibrium reinsurance and investment strategy. To streamline our presentation, we establish the following notations. For $\mathcal{D}=[0,T]\times\mathbb{R}\times[0,\infty)\times\mathbb{R}$, denote
\begin{align*}
C^{1,2,2,1}(\mathcal{D})=&\left\{\phi(t,x,s_{1},m_{1})|\phi(t,\cdot,\cdot,\cdot)\; \text{is once continuously differentiable on} \;[0,T],\right. \\
 &\;\left.\phi(\cdot,x,\cdot,\cdot)\;\text{is twice continuously differentiable on}\; \mathbb{R}, \; \right.  \\
 &\;\left.\phi(\cdot,\cdot,s_{1},\cdot)\;\text{is twice continuously differentiable on}\; [0,\infty) \;  \text{and} \right. \\
 &\;\left. \phi(\cdot,\cdot,\cdot,m_{1})\; \text{is once continuously differentiable on} \;\mathbb{R}\right\}
\end{align*}
and for any $\phi(t,x,s_{1},m_{1})\in C^{1,2,2,1}(\mathcal{D})$ and $u=(q,\pi)$, let
\begin{align*}
  \mathcal{A}^{u}\phi(t,x,s_{1},m_{1})=&\phi_{t}(t,x,s_{1},m_{1})+\left[(A+\pi(\mu-r))x+Bm_{1}+Cm_{2}+a\eta+a\eta_{2}q\right]\phi_{x}(t,x,s_{1},m_{1})\\
  &+0.5\left(b^{2}q^{2}+\pi^{2}x^{2}\sigma^{2}s^{2 \delta}_{1}\right)\phi_{xx}(t,x,s_{1},m_{1})+\mu s_{1}\phi_{s_{1}}(t,x,s_{1},m_{1})+0.5\sigma^{2}s^{2 \delta+2}_{1}\phi_{s_{1}s_{1}}(t,x,s_{1},m_{1})\\
  &+\pi x \sigma^{2}s^{2 \delta+1}_{1}\phi_{xs_{1}}(t,x,s_{1},m_{1})+\left(x-\alpha m_{1}-e^{-\alpha h}m_{2}\right)\phi_{m_{1}}(t,x,s_{1},m_{1}).
\end{align*}
Furthermore, we assume that
\begin{itemize}
  \item[$(A1)$] $\mathcal{A}^{u}\phi(t,X^{u}(t),S_{1}(t),M_{1}^{u}(t))\in\mathbb{L}^{1}_{\mathcal{F}}(0,T;\mathbb{R})$ and $\sigma S^{\delta+1}_{1}(t)\phi_{s_{1}}(t,X^{u}(t),S_{1}(t),M_{1}^{u}(t))$, $bq(t)\phi_{x}(t,X^{u}(t),S_{1}(t),M_{1}^{u}(t))$, $\pi(t) X^{u}(t)\sigma S^{\delta}_{1}(t)\phi_{x}(t,X^{u}(t),S_{1}(t),M_{1}^{u}(t))\in\mathbb{L}^{2}_{\mathcal{F}}(0,T;\mathbb{R})$.
\end{itemize}

Next, with the above notations, we will present the It\^{o} formula for the delayed reinsurance and investment problem.

\begin{lemma}\label{lemma3.1}
If $G(t,x,s_{1},m_{1})\in C^{1,2,2,1}(\mathcal{D})$ and $G(t,X^{u}(t),S_{1}(t),M_{1}^{u}(t))$ satisfies the condition $(A1)$, then
\begin{align*}
\mathrm{d}G(t,x,s_{1},m_{1})=&\mathcal{A}^{u}G(t,x,s_{1},m_{1})\mathrm{d}t+bqG_{x}(t,x,s_{1},m_{1})\mathrm{d}W_{1}(t)\\
&+\left[\sigma s^{\delta+1}_{1}G_{s_{1}}(t,x,s_{1},m_{1})+\pi x\sigma s^{\delta}_{1}G_{x}(t,x,s_{1},m_{1})\right]\mathrm{d}W_{2}(t).
\end{align*}
\end{lemma}
\begin{proof}
See Appendix B.
\end{proof}


We now establish a verification theorem to characterize the equilibrium strategy for reinsurance and investment.
\begin{theorem}\label{theorem3.1}
Suppose there exist functions $U, Y^{\gamma}, H\in C^{1,2,2,1}(\mathcal{D})$ satisfying
\begin{align}\label{eq20}
\sup\limits_{u} \;\left\{\mathcal{A}^{u}U(t,x,s_{1},m_{1})-\mathcal{A}^{u}H(t,x,s_{1},m_{1})
+\int\iota^{\gamma}(Y^{\gamma}(t,x,s_{1},m_{1}))\mathcal{A}^{u}Y^{\gamma}(t,x,s_{1},m_{1})\mathrm{d}\Gamma(\gamma)\right\}=0
\end{align}
and
\begin{equation}\label{eq21}
  \mathcal{A}^{\hat{u}}Y^{\gamma}(t,x,s_{1},m_{1})=0
\end{equation}
with boundary conditions $U(T,x,s_{1},m_{1})=x+\beta m_{1}$ and $Y^{\gamma}(T,x,s_{1},m_{1})=\varphi^{\gamma}(x+\beta m_{1})$ for all $\gamma$, where
\begin{equation}\label{eq22}
H(t,x,s_{1},m_{1})=\int(\varphi^{\gamma})^{-1}\left(Y^{\gamma}(t,x,s_{1},m_{1})\right)\mathrm{d}\Gamma(\gamma)
\end{equation}
and
\begin{align*}\label{eq23}
\hat{u}=\arg\sup\limits_{u} \;\left\{\mathcal{A}^{u}U(t,x,s_{1},m_{1})-\mathcal{A}^{u}H(t,x,s_{1},m_{1})
+\int\iota^{\gamma}(Y^{\gamma}(t,x,s_{1},m_{1}))\mathcal{A}^{u}Y^{\gamma}(t,x,s_{1},m_{1})\mathrm{d}\Gamma(\gamma)\right\}.
\end{align*}
Suppose further that $U$, $Y^{\gamma}$ and $H$ satisfy the condition $(A1)$ for all $\gamma$. Then $\hat{u}=(\hat{q},\hat{\pi})$ is an equilibrium
reinsurance and investment strategy and $V(t,x,s_{1},m_{1},m_{2})=U(t,x,s_{1},m_{1})$ as well as $y^{\hat{u},\gamma}(t,x,s_{1},m_{1},m_{2})=Y^{\gamma}(t,x,s_{1},m_{1})$.
\end{theorem}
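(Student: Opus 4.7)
The plan is to adapt the extended HJB verification blueprint of Bj\"{o}rk--Murgoci to the delay setting using the delayed It\^{o} formula of Lemma \ref{lemma3.1}, and proceed in three steps: (a) identify $Y^{\gamma}$ with the conditional expectation $y^{\hat{u},\gamma}$; (b) establish $U=H=J^{\hat{u}}$ throughout $\mathcal{D}$; (c) verify the equilibrium $\liminf$ inequality of Definition \ref{definition3.2} by a spike perturbation.

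For step (a), I would apply Lemma \ref{lemma3.1} to $Y^{\gamma}(s,X^{\hat{u}}(s),M_{1}^{\hat{u}}(s))$ on $[t,T]$. Equation \eqref{eq21} makes the drift vanish and assumption $(A1)$ imposed on $Y^\gamma$ makes both stochastic integrals true martingales, so taking the $\mathcal{F}_t$-conditional expectation together with the boundary condition $Y^{\gamma}(T,x,m_1)=\varphi^{\gamma}(x+\beta m_{1})$ yields $Y^{\gamma}(t,x,m_{1})=\mathbb{E}_{t}[\varphi^{\gamma}(X^{\hat{u}}(T)+\beta M_{1}^{\hat{u}}(T))]=y^{\hat{u},\gamma}(t,x,m_{1},m_{2})$; in particular $y^{\hat{u},\gamma}$ is $m_2$-independent, so substituting into \eqref{eq22} gives $H(t,x,m_{1})=J^{\hat{u}}(t,x,m_{1},m_{2})$. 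For step (b), evaluating \eqref{eq20} at $u=\hat{u}$ and invoking \eqref{eq21} produces the pointwise identity $\mathcal{A}^{\hat{u}}U=\mathcal{A}^{\hat{u}}H$. Applying Lemma \ref{lemma3.1} to $U-H$ along $X^{\hat{u}}$ on $[t,T]$, the drift vanishes, $(A1)$ kills the two martingale pieces in expectation, and the boundary identity $U(T,x,m_{1})=x+\beta m_{1}=H(T,x,m_{1})$ yields $U=H=J^{\hat{u}}$ on $\mathcal{D}$, so that once step (c) is complete, $V(t,x,m_1)=J^{\hat{u}}(t,x,m_{1},m_{2})=U(t,x,m_{1})$.

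For step (c), fix $(q,\pi)\in\Pi$ and build the spike-perturbed strategy $u_{\varepsilon}$ of Definition \ref{definition3.2}. Since $u_{\varepsilon}$ coincides with $\hat{u}$ on $[t+\varepsilon,T]$, the tower property combined with step (a) gives $y^{u_{\varepsilon},\gamma}(t,x,m_{1},m_{2})=\mathbb{E}_{t}[Y^{\gamma}(t+\varepsilon,X^{u_{\varepsilon}}(t+\varepsilon),M_{1}^{u_{\varepsilon}}(t+\varepsilon))]$, and applying Lemma \ref{lemma3.1} to $Y^{\gamma}$ on $[t,t+\varepsilon]$ under the constant control $(q,\pi)$ followed by $\mathbb{E}_t$ yields
\begin{align*}
y^{u_{\varepsilon},\gamma}(t,x,m_{1},m_{2})-Y^{\gamma}(t,x,m_{1})=\mathbb{E}_{t}\!\left[\int_{t}^{t+\varepsilon}\mathcal{A}^{(q,\pi)}Y^{\gamma}(s,X^{u_{\varepsilon}}(s),M_{1}^{u_{\varepsilon}}(s))\mathrm{d}s\right]=O(\varepsilon).
\end{align*}
A first-order Taylor expansion of $(\varphi^{\gamma})^{-1}$ about $Y^{\gamma}(t,x,m_{1})$, whose derivative is $\iota^{\gamma}$, integrated against $\mathrm{d}\Gamma(\gamma)$ via Fubini, produces
\begin{align*}
\frac{J^{u_{\varepsilon}}-J^{\hat{u}}}{\varepsilon}=\frac{1}{\varepsilon}\mathbb{E}_{t}\!\left[\int_{t}^{t+\varepsilon}\!\!\int\iota^{\gamma}(Y^{\gamma}(t,x,m_{1}))\mathcal{A}^{(q,\pi)}Y^{\gamma}(s,X^{u_{\varepsilon}}(s),M_{1}^{u_{\varepsilon}}(s))\mathrm{d}\Gamma(\gamma)\mathrm{d}s\right]+o(1).
\end{align*}
Passing $\varepsilon\downarrow 0$ via the Lebesgue differentiation theorem together with the sample-path continuity of $X^{u_{\varepsilon}}$ and $M_{1}^{u_{\varepsilon}}$, the limit equals $\int\iota^{\gamma}(Y^{\gamma})\mathcal{A}^{(q,\pi)}Y^{\gamma}\mathrm{d}\Gamma(\gamma)$ evaluated at $(t,x,m_{1})$, which by \eqref{eq20} together with $\mathcal{A}^{(q,\pi)}U=\mathcal{A}^{(q,\pi)}H$ (inherited from $U=H$) is $\leq 0$. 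Therefore $\liminf_{\varepsilon\downarrow 0}(J^{\hat{u}}-J^{u_{\varepsilon}})/\varepsilon\geq 0$, completing the verification.

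The main technical obstacle will be controlling the Taylor remainder uniformly in $\gamma\in\mathrm{supp}(\Gamma)$ so as to justify Fubini and the exchange of the $\varepsilon\downarrow 0$ limit with the $\gamma$-integration; this should follow from local Lipschitz regularity of $\iota^{\gamma}$, the integrability clause (iii) in Definition \ref{definition3.1}, and assumption $(A1)$, combined with standard moment bounds on the state over the shrinking interval $[t,t+\varepsilon]$ that are available through Lemma \ref{lemma3.2}.
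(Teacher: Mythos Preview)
Your proposal is correct and follows essentially the same three-step verification as the paper (Feynman--Kac for $Y^\gamma=y^{\hat u,\gamma}$; delayed It\^o for $U=H=J^{\hat u}$; spike perturbation combined with a first-order Taylor expansion of $(\varphi^\gamma)^{-1}$). The only cosmetic difference is in step (c): you exploit the identity $U=H$ to collapse \eqref{eq20} to $\sup_u\int\iota^\gamma\mathcal{A}^uY^\gamma\,\mathrm{d}\Gamma=0$ and work directly with $J^{u_\varepsilon}-J^{\hat u}$, whereas the paper keeps $U$ and $H$ separate via the Bj\"ork--Murgoci three-term decomposition \eqref{eq29} of $J^{u_\varepsilon}$ and applies It\^o to $U$, $H$ and $y^{\hat u,\gamma}$ individually before invoking \eqref{eq20}; both routes yield the same first-order inequality.
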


\begin{proof}
See Appendix C.
\end{proof}

\begin{remark}
It follows from the proof of Theorem \ref{theorem3.1} that $Y^{\gamma}(t,x,s_{1},m_{1})= y^{\hat{u},\gamma}(t,x,s_{1},m_{1},m_{2})$ and $U(t,x,s_{1},m_{1})=J^{\hat{u}}(t,x,s_{1},m_{1},m_{2})=V(t,x,s_{1},m_{1},m_{2})$. This implies $y^{\hat{u},\gamma}$, $J^{\hat{u}}$ and $V$ are independent of $m_{2}$. In the sequel, we will adopt notations $y^{\hat{u},\gamma}(t,x,s_{1},m_{1})$, $J^{\hat{u}}(t,x,s_{1},m_{1})$ and $V(t,x,s_{1},m_{1})$ for simplicity.
\end{remark}

\section{Equilibrium strategies under the CEV model}
In this section, we first seek to tackle the optimization problem of delayed reinsurance and investment with random risk aversion under the CEV model, adopting a general utility function framework to the greatest extent possible. Afterward, to derive (semi-)analytical equilibrium strategy for reinsurance and investment, we narrow our scope to analyze the exponential utility function.

Following \cite{Desmettre2023, Kang2026}, one can formulate a more explicit expression of the pseudo HJB equation to characterize the equilibrium strategy for reinsurance and investment. For this purpose, by the equation \eqref{eq22}, one has
\begin{eqnarray}\label{eq35}
\left\{
\begin{aligned}
H_{t}(t,x,s_{1},m_{1})=&\int\iota^{\gamma}\left(Y^{\gamma}(t,x,s_{1},m_{1})\right)Y_{t}^{\gamma}(t,x,s_{1},m_{1})\mathrm{d}\Gamma(\gamma),\\
H_{x}(t,x,s_{1},m_{1})=&\int\iota^{\gamma}(Y^{\gamma}(t,x,s_{1},m_{1}))Y_{x}^{\gamma}(t,x,s_{1},m_{1})\mathrm{d}\Gamma(\gamma),\\
H_{s_{1}}(t,x,s_{1},m_{1})=&\int\iota^{\gamma}(Y^{\gamma}(t,x,s_{1},m_{1}))Y_{s_{1}}^{\gamma}(t,x,s_{1},m_{1})\mathrm{d}\Gamma(\gamma),\\
H_{m_{1}}(t,x,s_{1},m_{1})=&\int\iota^{\gamma}(Y^{\gamma}(t,x,s_{1},m_{1}))Y_{m_{1}}^{\gamma}(t,x,s_{1},m_{1})\mathrm{d}\Gamma(\gamma),\\
H_{xx}(t,x,s_{1},m_{1})=&\int\iota^{\gamma}(Y^{\gamma}(t,x,s_{1},m_{1}))Y_{xx}^{\gamma}(t,x,s_{1},m_{1})\mathrm{d}\Gamma(\gamma)\\
&+\int(\iota^{\gamma})^{'}(Y^{\gamma}(t,x,s_{1},m_{1}))(Y_{x}^{\gamma}(t,x,s_{1},m_{1}))^{2}\mathrm{d}\Gamma(\gamma),\\
H_{s_{1}s_{1}}(t,x,s_{1},m_{1})=&\int\iota^{\gamma}(Y^{\gamma}(t,x,s_{1},m_{1}))Y_{s_{1}s_{1}}^{\gamma}(t,x,s_{1},m_{1})\mathrm{d}\Gamma(\gamma)\\
&+\int(\iota^{\gamma})^{'}(Y^{\gamma}(t,x,s_{1},m_{1}))(Y_{s_{1}}^{\gamma}(t,x,s_{1},m_{1}))^{2}\mathrm{d}\Gamma(\gamma),\\
H_{xs_{1}}(t,x,s_{1},m_{1})=&\int\iota^{\gamma}(Y^{\gamma}(t,x,s_{1},m_{1}))Y_{xs_{1}}^{\gamma}(t,x,s_{1},m_{1})\mathrm{d}\Gamma(\gamma)\\
&+\int(\iota^{\gamma})^{'}(Y^{\gamma}(t,x,s_{1},m_{1}))Y_{x}^{\gamma}(t,x,s_{1},m_{1})Y_{s_{1}}^{\gamma}(t,x,s_{1},m_{1})\mathrm{d}\Gamma(\gamma),
\end{aligned}
\right.
\end{eqnarray}
where $(\iota^{\gamma})^{'}$ represents the derivative of $\iota^{\gamma}$. Therefore, by substituting the system of equations \eqref{eq35} into the pseudo HJB equation \eqref{eq20}, the equation \eqref{eq20} admits the following equivalent representation
\begin{align*}
  \sup\limits_{u} \;&\bigg\{\mathcal{A}^{u}U(t,x,s_{1},m_{1})-0.5\sigma^{2}s^{2 \delta+2}_{1}\int(\iota^{\gamma})^{'}(Y^{\gamma}(t,x,s_{1},m_{1}))(Y_{s_{1}}^{\gamma}(t,x,s_{1},m_{1}))^{2}\mathrm{d}\Gamma(\gamma)\nonumber\\
  &\quad-0.5(b^{2}q^{2}+\pi^{2}x^{2}\sigma^{2}s^{2 \delta}_{1})\int(\iota^{\gamma})^{'}(Y^{\gamma}(t,x,s_{1},m_{1}))(Y_{x}^{\gamma}(t,x,s_{1},m_{1}))^{2}\mathrm{d}\Gamma(\gamma)\nonumber\\
  &\quad-\pi x\sigma^{2}s^{2 \delta+1}_{1}\int(\iota^{\gamma})^{'}(Y^{\gamma}(t,x,s_{1},m_{1}))Y_{x}^{\gamma}(t,x,s_{1},m_{1})Y_{s_{1}}^{\gamma}(t,x,s_{1},m_{1})\mathrm{d}\Gamma(\gamma)\bigg\}=0.
\end{align*}

After deriving the above pseudo HJB equation, we can take the partial derivatives of the terms inside the brackets of the above equation with respect to $q$ and $\pi$ individually. This procedure enables us to get
\begin{equation}\label{eq72}
  \hat{q}(t)=-\frac{a \eta_{2}U_{x}(t,x,s_{1},m_{1})}{b^{2}\left[U_{xx}(t,x,s_{1},m_{1})-\int(\iota^{\gamma})^{'}(Y^{\gamma}(t,x,s_{1},m_{1}))(Y_{x}^{\gamma}(t,x,s_{1},m_{1}))^{2}\mathrm{d}\Gamma(\gamma)\right]}
\end{equation}
and
\begin{align}\label{eq73}
\hat{\pi}(t)=&-\frac{(\mu- r)U_{x}(t,x,s_{1},m_{1})}
{x\sigma^{2}s^{2\delta}_{1}\left[U_{xx}(t,x,s_{1},m_{1})-\int(\iota^{\gamma})^{'}(Y^{\gamma}(t,x,s_{1},m_{1}))(Y_{x}^{\gamma}(t,x,s_{1},m_{1}))^{2}\mathrm{d}\Gamma(\gamma)\right]}\nonumber\\
&-\frac{s_{1}\left[U_{xs_{1}}(t,x,s_{1},m_{1})-\int(\iota^{\gamma})^{'}(Y^{\gamma}(t,x,s_{1},m_{1}))Y_{x}^{\gamma}(t,x,s_{1},m_{1})Y_{s_{1}}^{\gamma}(t,x,s_{1},m_{1})\mathrm{d}\Gamma(\gamma)\right]}
{x\left[U_{xx}(t,x,s_{1},m_{1})-\int(\iota^{\gamma})^{'}(Y^{\gamma}(t,x,s_{1},m_{1}))(Y_{x}^{\gamma}(t,x,s_{1},m_{1}))^{2}\mathrm{d}\Gamma(\gamma)\right]},
\end{align}
where we denote by $(\hat{q}(t),\hat{\pi}(t)):=(\hat{q}(t,x,s_{1},m_{1}),\hat{\pi}(t,x,s_{1},m_{1}))$ the candidate equilibrium reinsurance and investment strategy.

Due to the equation \eqref{eq28}, we know that $
U(t,x,s_{1},m_{1})=H(t,x,s_{1},m_{1})
=\int(\varphi^{\gamma})^{-1}\left(Y^{\gamma}(t,x,s_{1},m_{1})\right)\mathrm{d}\Gamma(\gamma)$. Therefore, equations \eqref{eq72} and \eqref{eq73} take the following forms
\begin{equation}\label{eq74}
  \hat{q}(t)=-\frac{a \eta_{2}\int\iota^{\gamma}(Y^{\gamma}(t,x,s_{1},m_{1}))Y_{x}^{\gamma}(t,x,s_{1},m_{1})\mathrm{d}\Gamma(\gamma)}{b^{2}\int\iota^{\gamma}(Y^{\gamma}(t,x,s_{1},m_{1}))
  Y_{xx}^{\gamma}(t,x,s_{1},m_{1})\mathrm{d}\Gamma(\gamma)}
\end{equation}
and
\begin{align}\label{eq75}
   \hat{\pi}(t)=&-\frac{(\mu- r)\int\iota^{\gamma}(Y^{\gamma}(t,x,s_{1},m_{1}))Y_{x}^{\gamma}(t,x,s_{1},m_{1})\mathrm{d}\Gamma(\gamma)}
   {x\sigma^{2}s^{2\delta}_{1}\int\iota^{\gamma}(Y^{\gamma}(t,x,s_{1},m_{1}))Y_{xx}^{\gamma}(t,x,s_{1},m_{1})\mathrm{d}\Gamma(\gamma)}\nonumber\\
   &-\frac{s_{1}\int\iota^{\gamma}(Y^{\gamma}(t,x,s_{1},m_{1}))Y_{xs_{1}}^{\gamma}(t,x,s_{1},m_{1})\mathrm{d}\Gamma(\gamma)}
   {x\int\iota^{\gamma}(Y^{\gamma}(t,x,s_{1},m_{1}))Y_{xx}^{\gamma}(t,x,s_{1},m_{1})\mathrm{d}\Gamma(\gamma)}.
\end{align}
Hereafter, our analysis centers on characterizing $Y^{\gamma}(t,x,s_{1},m_{1})$ for all $\gamma$.

Under the assumption that $\varphi^{\gamma}$ follows an exponential utility
\begin{equation}\label{eq49}
\varphi^{\gamma}(\cdot)=-\frac{1}{\gamma}e^{-\gamma (\cdot)},
\end{equation}
where $\gamma>0$, one can give its inverse function and the derivative of the inverse function as follows
$$
(\varphi^{\gamma})^{-1}(\cdot)=-\frac{1}{\gamma}\ln(-\gamma (\cdot)),\quad
\frac{\mathrm{d}}{\mathrm{d} y}(\varphi^{\gamma})^{-1}(y)=-\frac{1}{\gamma y}.
$$
Consequently, equations \eqref{eq74} and \eqref{eq75} can be rewritten as
$$
   \hat{q}(t)=-\frac{a \eta_{2} \int \frac{Y_{x}^{\gamma}(t,x,s_{1},m_{1})}{\gamma Y^{\gamma}(t,x,s_{1},m_{1})}\mathrm{d}\Gamma(\gamma)}
{b^{2}\int \frac{Y_{xx}^{\gamma}(t,x,s_{1},m_{1})}{\gamma Y^{\gamma}(t,x,s_{1},m_{1})}\mathrm{d}\Gamma(\gamma)},\;
   \hat{\pi}(t)=-\frac{(\mu- r) \int \frac{Y_{x}^{\gamma}(t,x,s_{1},m_{1})}{\gamma Y^{\gamma}(t,x,s_{1},m_{1})}\mathrm{d}\Gamma(\gamma)}
{x\sigma^{2}s^{2\delta}_{1}\int \frac{Y_{xx}^{\gamma}(t,x,s_{1},m_{1})}{\gamma Y^{\gamma}(t,x,s_{1},m_{1})}\mathrm{d}\Gamma(\gamma)}
-\frac{s^{}_{1}
   \int \frac{Y_{xs_{1}}^{\gamma}(t,x,s_{1},m_{1})}{\gamma Y^{\gamma}(t,x,s_{1},m_{1})}\mathrm{d}\Gamma(\gamma)}
{x\int \frac{Y_{xx}^{\gamma}(t,x,s_{1},m_{1})}{\gamma Y^{\gamma}(t,x,s_{1},m_{1})}\mathrm{d}\Gamma(\gamma)}.
$$

Now, we propose an ansatz $Y^{\gamma}(t,x,s_{1},m_{1})=-\frac{1}{\gamma}e^{\widehat{g}_{1}^{\gamma}(t)(x+\beta m_{1})+\widehat{g}_{2}^{\gamma}(t)+\widehat{g}_{3}^{\gamma}(t,s_{1})}$, where $\widehat{g}_{1}^{\gamma}(t)$ and $\widehat{g}_{2}^{\gamma}(t)$ are deterministic functions of $t\in[0,T]$ with $\widehat{g}_{1}^{\gamma}(T)=-\gamma$ and $\widehat{g}_{2}^{\gamma}(T)=0$, and $\widehat{g}_{3}^{\gamma}(t,s_{1})$ is a deterministic function of $t\in[0,T]$ and $s_{1}\in[0,\infty)$ with $\widehat{g}_{3}^{\gamma}(T,s_{1})=0$. It should be noted that the superscript $\gamma$ on functions $\widehat{g}_{1}^{\gamma}(t)$, $\widehat{g}_{2}^{\gamma}(t)$ and $\widehat{g}_{3}^{\gamma}(t,s_{1})$ signifies their dependence on $\gamma$. These notations do not imply that $\widehat{g}_{1}(t)$, $\widehat{g}_{2}(t)$ and $\widehat{g}_{3}(t,s_{1})$ are raised to the $\gamma$-th power. Similar notations will be used in Section 5 when there is no ambiguity. Thus, one can derive the subsequent derivatives
\begin{eqnarray*}
\left\{
\begin{aligned}
   &Y^{\gamma}_{t}(t,x,s_{1},m_{1})=Y^{\gamma}(t,x,s_{1},m_{1})\left[\frac{\partial \widehat{g}_{1}^{\gamma}(t)}{\partial t}\left(x+\beta m_{1}\right)+\frac{\partial \widehat{g}_{2}^{\gamma}(t)}{\partial t}+\frac{\partial \widehat{g}_{3}^{\gamma}(t,s_{1})}{\partial t}\right],\nonumber\\
   &Y^{\gamma}_{x}(t,x,s_{1},m_{1})=\widehat{g}_{1}^{\gamma}(t)Y^{\gamma}(t,x,s_{1},m_{1}),\quad
   Y^{\gamma}_{xx}(t,x,s_{1},m_{1})=\left(\widehat{g}_{1}^{\gamma}(t)\right)^{2}Y^{\gamma}(t,x,s_{1},m_{1}),
   \nonumber\\
   &Y^{\gamma}_{s_{1}}(t,x,s_{1},m_{1})=Y^{\gamma}(t,x,s_{1},m_{1})\frac{\partial \widehat{g}_{3}^{\gamma}(t,s_{1})}{\partial s_{1}},\;
   Y^{\gamma}_{xs_{1}}(t,x,s_{1},m_{1})=\widehat{g}_{1}^{\gamma}(t)Y^{\gamma}(t,x,s_{1},m_{1})\frac{\partial \widehat{g}_{3}^{\gamma}(t,s_{1})}{\partial s_{1}},\nonumber\\
   &Y^{\gamma}_{s_{1}s_{1}}(t,x,s_{1},m_{1})=Y^{\gamma}(t,x,s_{1},m_{1})\left[\frac{\partial^{2} \widehat{g}_{3}^{\gamma}(t,s_{1})}{\partial s^{2}_{1}}+\left(\frac{\partial \widehat{g}_{3}^{\gamma}(t,s_{1})}{\partial s_{1}}\right)^{2}\right],\nonumber\\
   &Y^{\gamma}_{m_{1}}(t,x,s_{1},m_{1})=\beta\widehat{g}_{1}^{\gamma}(t)Y^{\gamma}(t,x,s_{1},m_{1}).
\end{aligned}
\right.
\end{eqnarray*}
Moreover,  the candidate equilibrium reinsurance and investment strategy can be given by
\begin{align}\label{eq76}
   \hat{q}(t)=-\frac{a \eta_{2} \int \frac{\widehat{g}_{1}^{\gamma}(t)}{\gamma }\mathrm{d}\Gamma(\gamma)}
{b^{2}\int \frac{(\widehat{g}_{1}^{\gamma}(t))^{2}}{\gamma }\mathrm{d}\Gamma(\gamma)},\quad
   \hat{\pi}(t)=-\frac{(\mu- r) \int \frac{\widehat{g}_{1}^{\gamma}(t)}{\gamma }\mathrm{d}\Gamma(\gamma)}
{x\sigma^{2}s^{2\delta}_{1}\int \frac{(\widehat{g}_{1}^{\gamma}(t))^{2}}{\gamma }\mathrm{d}\Gamma(\gamma)}
-\frac{s^{}_{1}
   \int \frac{\widehat{g}_{1}^{\gamma}(t)}{\gamma }\frac{\partial \widehat{g}_{3}^{\gamma}(t,s_{1})}{\partial s_{1}}\mathrm{d}\Gamma(\gamma)}
{x\int \frac{(\widehat{g}_{1}^{\gamma}(t))^{2}}{\gamma }\mathrm{d}\Gamma(\gamma)}.
\end{align}
Through the insertion of the partial derivatives of $Y^{\gamma}(t,x,s_{1},m_{1})$ into \eqref{eq21}, we arrive at
\begin{align*}
  0=&\frac{\partial \widehat{g}_{1}^{\gamma}(t)}{\partial t}(x+\beta m_{1})+\frac{\partial \widehat{g}_{2}^{\gamma}(t)}{\partial t}+\frac{\partial \widehat{g}_{3}^{\gamma}(t,s_{1})}{\partial t}
  +\left[(A+\hat{\pi}(\mu-r))x+Bm_{1}+Cm_{2}+a\eta+a\eta_{2}\hat{q}\right]\widehat{g}_{1}^{\gamma}(t)\nonumber\\
&+0.5(b^{2}\hat{q}^{2}+\hat{\pi}^{2}x^{2}\sigma^{2}s^{2\delta}_{1})(\widehat{g}_{1}^{\gamma}(t))^{2}+\mu s_{1}\frac{\partial \widehat{g}_{3}^{\gamma}(t,s_{1})}{\partial s_{1}}
+0.5\sigma^{2}s^{2\delta+2}_{1}\left[\frac{\partial^{2} \widehat{g}_{3}^{\gamma}(t,s_{1})}{\partial s^{2}_{1}}+\left(\frac{\partial \widehat{g}_{3}^{\gamma}(t,s_{1})}{\partial s_{1}}\right)^{2}\right]\nonumber\\
&+\hat{\pi}x\sigma^{2}s^{2\delta+1}_{1}\widehat{g}_{1}^{\gamma}(t)\frac{\partial \widehat{g}_{3}^{\gamma}(t,s_{1})}{\partial s_{1}}
+\left(x-\alpha m_{1}-e^{-\alpha h}m_{2}\right)\beta \widehat{g}_{1}^{\gamma}(t),
\end{align*}
where $\hat{q}$ and $\hat{\pi}$ are presented by the equation \eqref{eq76}. By adopting the following assumption
\begin{itemize}
  \item[$(A2)$] $C=\beta e^{-\alpha h}$ and $B e^{-\alpha h}=(\alpha+A+\beta)C$,
\end{itemize}
the above equation becomes
\begin{align}\label{eq77}
  0=&\frac{\partial \widehat{g}_{1}^{\gamma}(t)}{\partial t}(x+\beta m_{1})+\frac{\partial \widehat{g}_{2}^{\gamma}(t)}{\partial t}+\frac{\partial \widehat{g}_{3}^{\gamma}(t,s_{1})}{\partial t}
  +\left[\hat{\pi}(\mu-r)x+a\eta+a\eta_{2}\hat{q}\right]\widehat{g}_{1}^{\gamma}(t)+\mu s_{1}\frac{\partial \widehat{g}_{3}^{\gamma}(t,s_{1})}{\partial s_{1}}\nonumber\\
&+0.5(b^{2}\hat{q}^{2}+\hat{\pi}^{2}x^{2}\sigma^{2}s^{2\delta}_{1})(\widehat{g}_{1}^{\gamma}(t))^{2}
+0.5\sigma^{2}s^{2\delta+2}_{1}\left[\frac{\partial^{2} \widehat{g}_{3}^{\gamma}(t,s_{1})}{\partial s^{2}_{1}}+\left(\frac{\partial \widehat{g}_{3}^{\gamma}(t,s_{1})}{\partial s_{1}}\right)^{2}\right]\nonumber\\
&+\hat{\pi}x\sigma^{2}s^{2\delta+1}_{1}\widehat{g}_{1}^{\gamma}(t)\frac{\partial \widehat{g}_{3}^{\gamma}(t,s_{1})}{\partial s_{1}}
+\left(A+\beta\right)\left(x+\beta m_{1}\right) \widehat{g}_{1}^{\gamma}(t).
\end{align}

\begin{remark}
Note that the assumption $(A2)$ can be used to ensure the existence of a (semi-)closed-form solution for the equation \eqref{eq21}. Although such an assumption limits the generality to some extent, it is beneficial to transform the delayed reinsurance and investment optimization problem into a finite-dimensional one. Furthermore, it is a sufficient condition to make sure that $V$ depends solely on $(t,x,s_{1},m_{1})$.
\end{remark}

Next, letting $a_{1}=\frac{\int \frac{\widehat{g}_{1}^{\gamma}(t)}{\gamma }\mathrm{d}\Gamma(\gamma)}
{\int \frac{(\widehat{g}_{1}^{\gamma}(t))^{2}}{\gamma }\mathrm{d}\Gamma(\gamma)}$ and $a_{2}=\frac{
   \int \frac{\widehat{g}_{1}^{\gamma}(t)}{\gamma }\frac{\partial \widehat{g}_{3}^{\gamma}(t,s_{1})}{\partial s_{1}}\mathrm{d}\Gamma(\gamma)}
{\int \frac{(\widehat{g}_{1}^{\gamma}(t))^{2}}{\gamma }\mathrm{d}\Gamma(\gamma)}$, one can substitute the equation \eqref{eq76} into the equation \eqref{eq77} to obtain
\begin{align*}
  0=&\frac{\partial \widehat{g}_{1}^{\gamma}(t)}{\partial t}(x+\beta m_{1})+\frac{\partial \widehat{g}_{2}^{\gamma}(t)}{\partial t}+\frac{\partial \widehat{g}_{3}^{\gamma}(t,s_{1})}{\partial t}
  +\left[-\frac{(\mu-r)^{2}a_{1}}{\sigma^{2}s^{2\delta}_{1}}-(\mu-r)s_{1}a_{2}
  +a\eta-\frac{a^{2}\eta^{2}_{2}a_{1}}{b^{2}}\right]\widehat{g}_{1}^{\gamma}(t)\nonumber\\
&+\mu s_{1}\frac{\partial \widehat{g}_{3}^{\gamma}(t,s_{1})}{\partial s_{1}}+0.5\left(\frac{a^{2}\eta^{2}_{2}a^{2}_{1}}{b^{2}}+\frac{(\mu-r)^{2}a^{2}_{1}}{\sigma^{2}s^{2\delta}_{1}}+2(\mu-r)s_{1}a_{1}a_{2}+
\sigma^{2}s^{2\delta+2}_{1}a^{2}_{2}\right)(\widehat{g}_{1}^{\gamma}(t))^{2}
\nonumber\\
&+0.5\sigma^{2}s^{2\delta+2}_{1}\left[\frac{\partial^{2} \widehat{g}_{3}^{\gamma}(t,s_{1})}{\partial s^{2}_{1}}+\left(\frac{\partial \widehat{g}_{3}^{\gamma}(t,s_{1})}{\partial s_{1}}\right)^{2}\right]
+\left(A+\beta\right)\left(x+\beta m_{1}\right) \widehat{g}_{1}^{\gamma}(t)\nonumber\\
&-\left[(\mu-r)s_{1}a_{1}+\sigma^{2}s^{2\delta+2}_{1}a_{2}\right]\widehat{g}_{1}^{\gamma}(t)\frac{\partial \widehat{g}_{3}^{\gamma}(t,s_{1})}{\partial s_{1}}.
\end{align*}
By the method of comparing coefficients, we can decompose the above equation into $\frac{\partial \widehat{g}_{1}^{\gamma}(t)}{\partial t}+(A+\beta) \widehat{g}_{1}^{\gamma}(t)=0$, $\frac{\partial \widehat{g}_{2}^{\gamma}(t)}{\partial t}+\left(a \eta-\frac{a^{2}\eta^{2}_{2}a_{1}}{b^{2}}\right)\widehat{g}_{1}^{\gamma}(t)+0.5\frac{a^{2}\eta^{2}_{2}a^{2}_{1}}{b^{2}}(\widehat{g}_{1}^{\gamma}(t))^{2}=0$ and
\begin{align*}
\frac{\partial \widehat{g}_{3}^{\gamma}(t,s_{1})}{\partial t}&+\left[-\frac{(\mu-r)^{2}a_{1}}{\sigma^{2}s^{2\delta}_{1}}-(\mu-r)s_{1}a_{2}\right]\widehat{g}_{1}^{\gamma}(t)+\mu s_{1}\frac{\partial \widehat{g}_{3}^{\gamma}(t,s_{1})}{\partial s_{1}}+0.5\left[\frac{(\mu-r)^{2}a^{2}_{1}}{\sigma^{2}s^{2\delta}_{1}}\right.\nonumber \\
&+2(\mu-r)s_{1}a_{1}a_{2}+
\sigma^{2}s^{2\delta+2}_{1}a^{2}_{2}\bigg](\widehat{g}_{1}^{\gamma}(t))^{2}+0.5\sigma^{2}s^{2\delta+2}_{1}\left[\frac{\partial^{2} \widehat{g}_{3}^{\gamma}(t,s_{1})}{\partial s^{2}_{1}}+\left(\frac{\partial \widehat{g}_{3}^{\gamma}(t,s_{1})}{\partial s_{1}}\right)^{2}\right]\nonumber \\
&-\left[(\mu-r)s_{1}a_{1}+\sigma^{2}s^{2\delta+2}_{1}a_{2}\right]\widehat{g}_{1}^{\gamma}(t)\frac{\partial \widehat{g}_{3}^{\gamma}(t,s_{1})}{\partial s_{1}}=0.
\end{align*}
Since $\widehat{g}_{1}^{\gamma}(T)=-\gamma$, it is easy to derive that $\widehat{g}_{1}^{\gamma}(t)=-\gamma e^{(A+\beta)(T-t)}$ and $a_{1}=-\frac{1}{\mathbb{E}[\gamma]}e^{-(A+\beta)(T-t)}$ with $\mathbb{E}[\gamma]=\int\gamma \mathrm{d}\Gamma(\gamma)$. Moreover, by $\widehat{g}_{2}^{\gamma}(T)=0$, one can derive that $\widehat{g}_{2}^{\gamma}(t)=\frac{\gamma a \eta}{A+\beta}\left[1-e^{(A+\beta)(T-t)}\right]+\frac{a^{2}\eta^{2}_{2}}{b^{2}}(\frac{0.5\gamma^{2}}{\mathbb{E}^{2}[\gamma]}-\frac{\gamma}{\mathbb{E}[\gamma]})(T-t)$. Then, we assume that $\widehat{g}_{3}^{\gamma}(t,s_{1})=\widehat{g}_{4}^{\gamma}(t,z)$ with $\widehat{g}_{4}^{\gamma}(T,z)=0$, where $z=s^{-2 \delta}_{1}$. Therefore, $\frac{\partial \widehat{g}_{3}^{\gamma}(t,s_{1})}{\partial t}=\frac{\partial \widehat{g}_{4}^{\gamma}(t,z)}{\partial t}$, $\frac{\partial \widehat{g}_{3}^{\gamma}(t,s_{1})}{\partial s_{1}}=-2\delta s^{-2 \delta-1}_{1}\frac{\partial \widehat{g}_{4}^{\gamma}(t,z)}{\partial z}$, $\frac{\partial^{2} \widehat{g}_{3}^{\gamma}(t,s_{1})}{\partial s^{2}_{1}}=2\delta(2\delta+1) s^{-2 \delta-2}_{1}\frac{\partial \widehat{g}_{4}^{\gamma}(t,z)}{\partial z}+4\delta^{2} s^{-4 \delta-2}_{1}\frac{\partial^{2} \widehat{g}_{4}^{\gamma}(t,z)}{\partial z^{2}}$ and $a_{2}=\frac{2\delta s^{-2 \delta-1}_{1}}{\mathbb{E}[\gamma]}\int \frac{\partial \widehat{g}_{4}^{\gamma}(t,z)}{\partial z}\mathrm{d}\Gamma(\gamma)e^{-(A+\beta)(T-t)}$. Thus, the equation for $\widehat{g}_{3}^{\gamma}(t,s_{1})$ can be rewritten as
\begin{align*}
&\frac{\partial \widehat{g}_{4}^{\gamma}(t,z)}{\partial t}-\frac{(\mu-r)^{2}\gamma z}{\sigma^{2}\mathbb{E}[\gamma]}+\frac{2(\mu-r)\delta\gamma z}{\mathbb{E}[\gamma]}\int \frac{\partial \widehat{g}_{4}^{\gamma}(t,z)}{\partial z}\mathrm{d}\Gamma(\gamma)-2\mu\delta z\frac{\partial \widehat{g}_{4}^{\gamma}(t,z)}{\partial z}+\frac{0.5(\mu-r)^{2}\gamma^{2} z}{\sigma^{2}\mathbb{E}^{2}[\gamma]}\nonumber \\
&\quad\quad-\frac{2(\mu-r)\delta\gamma^{2} z}{\mathbb{E}^{2}[\gamma]}\int \frac{\partial \widehat{g}_{4}^{\gamma}(t,z)}{\partial z}\mathrm{d}\Gamma(\gamma)
+\frac{2\delta^{2}\sigma^{2}\gamma^{2} z}{\mathbb{E}^{2}[\gamma]}\left(\int \frac{\partial \widehat{g}_{4}^{\gamma}(t,z)}{\partial z}\mathrm{d}\Gamma(\gamma)\right)^{2}
+2\delta^{2}\sigma^{2}z\frac{\partial^{2} \widehat{g}_{4}^{\gamma}(t,z)}{\partial z^{2}}\nonumber \\
&\quad\quad +\delta(2\delta+1)\sigma^{2}\frac{\partial \widehat{g}_{4}^{\gamma}(t,z)}{\partial z}+2\delta^{2}\sigma^{2}z\left(\frac{\partial \widehat{g}_{4}^{\gamma}(t,z)}{\partial z}\right)^{2}+\frac{2\delta \gamma z}{\mathbb{E}[\gamma]}\left[\mu-r-2\sigma^{2} \delta\int \frac{\partial \widehat{g}_{4}^{\gamma}(t,z)}{\partial z}\mathrm{d}\Gamma(\gamma)\right]\frac{\partial \widehat{g}_{4}^{\gamma}(t,z)}{\partial z}=0.
\end{align*}
In addition, we assume that $\widehat{g}_{4}^{\gamma}(t,z)=\widehat{g}_{5}^{\gamma}(t)+\widehat{g}_{6}^{\gamma}(t)z$ with $\widehat{g}_{5}^{\gamma}(T)=\widehat{g}_{6}^{\gamma}(T)=0$. By substituting the assumed solution form for $\widehat{g}_{4}^{\gamma}(t,z)$ into the above equation and using the method of comparing coefficients, we can obtain that $\frac{\partial \widehat{g}_{5}^{\gamma}(t)}{\partial t}+\delta(2\delta+1)\sigma^{2}\widehat{g}_{6}^{\gamma}(t)=0$ and
\begin{align}\label{eq78}
\frac{\partial \widehat{g}_{6}^{\gamma}(t)}{\partial t}&+\frac{(\mu-r)^{2}}{\sigma^{2}}\left[\frac{0.5\gamma^{2}}{\mathbb{E}^{2}[\gamma]}-\frac{\gamma}{\mathbb{E}[\gamma]}\right]-2(\mu-r)\delta \int \widehat{g}_{6}^{\gamma}(t)\mathrm{d}\Gamma(\gamma)\left[\frac{\gamma^{2}}{\mathbb{E}^{2}[\gamma]}-\frac{\gamma}{\mathbb{E}[\gamma]}\right]+2\delta^{2}\sigma^{2}\left( \widehat{g}_{6}^{\gamma}(t)\right)^{2}\nonumber \\
&+\frac{2\delta^{2}\sigma^{2}\gamma^{2}}{\mathbb{E}^{2}[\gamma]}\left(\int \widehat{g}_{6}^{\gamma}(t)\mathrm{d}\Gamma(\gamma)\right)^{2}-2\delta\widehat{g}_{6}^{\gamma}(t)\left[\mu-\frac{(\mu-r)\gamma}{\mathbb{E}[\gamma]}+
\frac{2\sigma^{2}\delta\gamma}{\mathbb{E}[\gamma]}\int \widehat{g}_{6}^{\gamma}(t)\mathrm{d}\Gamma(\gamma)\right]=0.
\end{align}

We note that the equation \eqref{eq78} is infinite-dimensional on account of the distribution of $\Gamma$. Although (semi-)closed-form solutions for $\widehat{g}_{6}^{\gamma}(t)$ may be derived under specific distributional assumptions regarding $\Gamma$, the general scenario defies analytical handling. Consequently, we seek to discretize $\Gamma$, the integral of $\widehat{g}_{6}^{\gamma}(t)$ is converted into a finite sum, thereby converting the ODE into a form that is more amenable to analysis.

\subsection{Special case with $n$ possible risk aversions}
We assume that the random risk aversion conforms to an $n$-point discrete distribution, featuring possible positive outcomes $\gamma_{i}\;(i=1,\cdot\cdot\cdot,n)$ and their respective probabilities $p_{i}=p(\gamma_{i})$ with $\sum\limits_{i=1}^{n}p_{i}=1$. Then, for $i=1,\cdot\cdot\cdot,n$, the equations of $\widehat{g}_{5}^{\gamma_{i}}(t)$ and $\widehat{g}_{6}^{\gamma_{i}}(t)$ can be given by $\frac{\partial \widehat{g}_{5}^{\gamma_{i}}(t)}{\partial t}+\delta(2\delta+1)\sigma^{2}\widehat{g}_{6}^{\gamma_{i}}(t)=0$ and
\begin{align}\label{eq79}
-\frac{\partial \widehat{g}_{6}^{\gamma_{i}}(t)}{\partial t}=&\frac{(\mu-r)^{2}}{\sigma^{2}}\left(0.5a^{2}_{3i}-a_{3i}\right)-2(\mu-r)\delta\left(a^{2}_{3i}-a_{3i}\right) \sum\limits_{i=1}^{n}\widehat{g}_{6}^{\gamma_{i}}(t)p_{i}+2\delta^{2}\sigma^{2}\left( \widehat{g}_{6}^{\gamma_{i}}(t)\right)^{2}\nonumber \\
&+2\delta^{2}\sigma^{2}a^{2}_{3i}\left(\sum\limits_{i=1}^{n}\widehat{g}_{6}^{\gamma_{i}}(t)p_{i}\right)^{2}-2\delta\widehat{g}_{6}^{\gamma_{i}}(t)\left[\mu-(\mu-r)a_{3i}+
2\sigma^{2}\delta a_{3i}\sum\limits_{i=1}^{n}\widehat{g}_{6}^{\gamma_{i}}(t)p_{i}\right],
\end{align}
where $a_{3i}=\frac{\gamma_{i}}{\sum\limits_{i=1}^{n}\gamma_{i}p_{i}}$. It is not difficult to discern that the right-hand side of \eqref{eq79} is a quadratic polynomial in
$\widehat{g}_{6}^{\gamma_{i}}(t)\;(i=1,\cdot\cdot\cdot,n)$, whose coefficient functions are bounded and Lipschitz continuous with regard to $t$. Therefore, by the uniqueness theorem \cite{Zwillinger1998}, the equation \eqref{eq79} has a unique solution. Subsequently, the uniqueness of the solution for $\widehat{g}_{5}^{\gamma_{i}}(t)$ can be secured. Furthermore, the candidate equilibrium reinsurance and investment strategy can be characterized by
\begin{equation}\label{eq80}
\hat{q}(t)=\frac{a \eta_{2}}{b^{2}\sum\limits_{i=1}^{n}\gamma_{i}p_{i}}e^{-(A+\beta)(T-t)},\quad
   \hat{\pi}(t)=\left(\frac{\mu-r}{\sigma^{2}}-2\delta\sum\limits_{i=1}^{n}\widehat{g}_{6}^{\gamma_{i}}(t)p_{i}\right)
   \frac{e^{-(A+\beta)(T-t)}}{xs^{2\delta}_{1}\sum\limits_{i=1}^{n}\gamma_{i}p_{i}}.
\end{equation}

Under mild conditions, it will be showed that assumptions of Theorem \ref{theorem3.1} are met and the candidate strategy \eqref{eq80} is admissible, thereby establishing it as an equilibrium reinsurance and investment strategy.

\begin{theorem}\label{theorem4.4}
Assume that the utility function $\varphi^{\gamma}$ satisfies the equation \eqref{eq49} and the assumption $(A2)$ holds. Moreover, assume that there are $n$ possible risk aversions $\gamma_{i}\in[\epsilon_{1},\infty)$ with $0<\epsilon_{1}<\infty$ for $i=1,\cdot\cdot\cdot,n$, the solution to the equation \eqref{eq79} is non-positive and the parameters satisfy $-32\gamma_{i}(\mu-r)\delta^{2}\widetilde{\pi}(s)
  +128\gamma_{i}^{2}\delta^{2}\sigma^{2}\widetilde{\pi}^{2}(s)\leq\frac{\kappa^{2}}{2\sigma^{2}}$ for $s\in[0,T]$, where $\widetilde{\pi}(s)$ is given by the equation \eqref{eq84}. Then the equilibrium reinsurance and investment strategy satisfies the equation \eqref{eq80} and the associated equilibrium value function is
$V(t,x,s_{1},m_{1})=-\sum\limits_{i=1}^{n}\frac{1}{\gamma_{i}}\left[\widehat{g}_{1}^{\gamma_{i}}(t)(x+\beta m_{1})+\widehat{g}_{2}^{\gamma_{i}}(t)+\widehat{g}_{5}^{\gamma_{i}}(t)+\widehat{g}_{6}^{\gamma_{i}}(t)s^{-2\delta}_{1}\right]p_{i}$, where $\widehat{g}_{1}^{\gamma_{i}}(t)=-\gamma_{i} e^{(A+\beta)(T-t)}$, $\widehat{g}_{2}^{\gamma_{i}}(t)=\frac{\gamma_{i} a \eta}{A+\beta}\left[1-e^{(A+\beta)(T-t)}\right]+\frac{a^{2}\eta^{2}_{2}}{b^{2}}(0.5a^{2}_{3i}-a_{3i})(T-t)$, $\widehat{g}_{5}^{\gamma_{i}}(t)$ is given by $\frac{\partial \widehat{g}_{5}^{\gamma_{i}}(t)}{\partial t}+\delta(2\delta+1)\sigma^{2}\widehat{g}_{6}^{\gamma_{i}}(t)=0$ and $\widehat{g}_{6}^{\gamma_{i}}(t)$ meets the equation \eqref{eq79}.
\end{theorem}
\begin{proof}
See Appendix D.
\end{proof}

\begin{corollary}
If $\alpha=h=B=C=\beta=0$, which implies that delay is not embedded in the model, then the equilibrium reinsurance and investment strategy is
$$
\hat{q}(t)=\frac{a \eta_{2}}{b^{2}\sum\limits_{i=1}^{n}\gamma_{i}p_{i}}e^{-r(T-t)},\quad
   \hat{\pi}(t)=\left(\frac{\mu-r}{\sigma^{2}}-2\delta\sum\limits_{i=1}^{n}\widehat{g}_{6}^{\gamma_{i}}(t)p_{i}\right)
   \frac{e^{-r(T-t)}}{xs^{2\delta}_{1}\sum\limits_{i=1}^{n}\gamma_{i}p_{i}}
$$
and the equilibrium value function becomes $
V(t,x,s_{1},m_{1})=-\sum\limits_{i=1}^{n}\frac{1}{\gamma_{i}}\left[\widehat{g}_{1}^{\gamma_{i}}(t)x+\widehat{g}_{2}^{\gamma_{i}}(t)+\widehat{g}_{5}^{\gamma_{i}}(t)+\widehat{g}_{6}^{\gamma_{i}}(t)s^{-2\delta}_{1}\right]p_{i}$,
where $\widehat{g}_{1}^{\gamma_{i}}(t)=-\gamma_{i} e^{r(T-t)}$, $\widehat{g}_{2}^{\gamma_{i}}(t)=\frac{\gamma_{i} a \eta}{r}\left[1-e^{r(T-t)}\right]+\frac{a^{2}\eta^{2}_{2}}{b^{2}}(0.5a^{2}_{3i}-a_{3i})(T-t)$, $\widehat{g}_{5}^{\gamma_{i}}(t)$ and $\widehat{g}_{6}^{\gamma_{i}}(t)$ are as given in Theorem \ref{theorem4.4}.
\end{corollary}

\subsection{Special case with one possible risk aversion}
When the distribution of risk aversion is reduced to a single realization $\gamma$, one has
$\frac{\partial \widehat{g}_{6}^{\gamma}(t)}{\partial t}-2r \delta \widehat{g}_{6}^{\gamma}(t)-\frac{0.5(\mu-r)^{2}}{\sigma^{2}}=0$. Moreover, by using $\widehat{g}_{5}^{\gamma}(T)=\widehat{g}_{6}^{\gamma}(T)=0$, we can obtain that $\widehat{g}_{6}^{\gamma}(t)=\frac{(\mu-r)^{2}}{4 r \delta \sigma^{2}}\left(e^{-2r \delta(T-t)}-1\right)$ and $\widehat{g}_{5}^{\gamma}(t)=\frac{(2\delta+1)(\mu-r)^{2}}{4r}\left[\frac{1-e^{-2r \delta(T-t)}}{2r \delta}-T+t\right]$. Therefore, the candidate equilibrium reinsurance and investment strategy satisfies
\begin{equation}\label{eq81}
\hat{q}(t)=\frac{a \eta_{2}}{b^{2}\gamma}e^{-(A+\beta)(T-t)},\quad
   \hat{\pi}(t)=\left[\frac{\mu-r}{\sigma^{2}}-\frac{(\mu-r)^{2}}{2r \sigma^{2}}\left(e^{-2r \delta(T-t)}-1\right)\right]
   \frac{e^{-(A+\beta)(T-t)}}{xs^{2\delta}_{1}\gamma}
\end{equation}
and the candidate equilibrium value function is
\begin{align}\label{eq82}
U(t,x,s_{1},m_{1})=&(x+\beta m_{1})e^{(A+\beta)(T-t)}-\frac{a \eta}{A+\beta}\left[1-e^{(A+\beta)(T-t)}\right]
\nonumber \\
&+\frac{0.5a^{2} \eta^{2}_{2}}{b^{2}\gamma}(T-t)-\frac{1}{\gamma}\left[\widehat{g}_{5}^{\gamma}(t)+\widehat{g}_{6}^{\gamma}(t) s^{-2\delta}_{1}\right].
\end{align}

Given that $\widehat{g}_{5}^{\gamma}(t)$ and $\widehat{g}_{6}^{\gamma}(t)$ exhibit unique global solutions, the methodology applied to $n$ potential risk aversions can be fully followed to verify the admissibility conditions and assumptions outlined in Theorem \ref{theorem3.1}, but with greater simplicity. As a result, the subsequent findings are achievable.

\begin{theorem}\label{theorem4.5}
Assume that the utility function $\varphi^{\gamma}$ satisfies the equation \eqref{eq49} and the assumption $(A2)$ holds. Moreover, assume that there is one possible risk aversion $\gamma\in[\epsilon_{1},\infty)$ and the parameters satisfy $-32\gamma(\mu-r)\delta^{2}\widetilde{\pi}(s)
  +128\gamma^{2}\delta^{2}\sigma^{2}\widetilde{\pi}^{2}(s)\leq\frac{\kappa^{2}}{2\sigma^{2}}$ for $s\in[0,T]$, where $\widetilde{\pi}(s)=\frac{\mu-r}{\gamma\sigma^{2}}-\frac{(\mu-r)^{2}}{2\gamma r \sigma^{2}}\left(e^{-2r \delta(T-s)}-1\right)$. Then the equilibrium reinsurance and investment strategy and the associated equilibrium value function are characterized by equations \eqref{eq81} and \eqref{eq82}, respectively.
\end{theorem}

\begin{corollary}
If $\alpha=h=B=C=\beta=0$, which indicates that delay is not incorporated into the model, then the equilibrium reinsurance and investment strategy becomes
\begin{equation*}
\hat{q}(t)=\frac{a \eta_{2}}{b^{2}\gamma}e^{-r(T-t)},\quad
   \hat{\pi}(t)=\left[\frac{\mu-r}{\sigma^{2}}-\frac{(\mu-r)^{2}}{2r \sigma^{2}}\left(e^{-2r \delta(T-t)}-1\right)\right]
   \frac{e^{-r(T-t)}}{xs^{2\delta}_{1}\gamma}
\end{equation*}
and the equilibrium value function is $V(t,x,s_{1})=xe^{r(T-t)}-\frac{a \eta}{r}\left[1-e^{r(T-t)}\right]
+\frac{0.5a^{2} \eta^{2}_{2}}{b^{2}\gamma}(T-t)-\frac{1}{\gamma}\left[\widehat{g}_{5}^{\gamma}(t)+\widehat{g}_{6}^{\gamma}(t) s^{-2\delta}_{1}\right]$.
\end{corollary}

\section{Equilibrium strategies under the geometric Brownian motion}
In this section, we first attempt to address the delayed reinsurance and investment problem with random risk aversion under the geometric Brownian motion in a general utility function framework as far as possible. Subsequently, in order to obtain (semi-)analytical equilibrium reinsurance and investment strategy, we narrow our focus to analyze the exponential utility function and the power utility function, respectively.

If $\delta=0$, which implies that the CEV model degenerates into geometric Brownian motion, then $U(t,x,s_{1},m_{1})$, $H(t,x,s_{1},m_{1})$ and $y^{\hat{u},\gamma}(t,x,s_{1},m_{1})$ are independent of $s_{1}$. In the sequel, we will adopt the notations $U(t,x,m_{1})$, $H(t,x,m_{1})$ and $y^{\hat{u},\gamma}(t,x,m_{1})$ for simplicity. Then, one can derive the derivatives $H_{t}(t,x,m_{1})$, $H_{x}(t,x,m_{1})$, $H_{m_{1}}(t,x,m_{1})$ and $H_{xx}(t,x,m_{1})$ that are similar to those shown in the equation \eqref{eq35}. Thus, the equation \eqref{eq20} can be rewritten as
\begin{equation}\label{eq36}
  \sup\limits_{u} \;\left\{\mathcal{\widetilde{A}}^{u}U(t,x,m_{1})-0.5(b^{2}q^{2}+\pi^{2}x^{2}\sigma^{2})\int(\iota^{\gamma})^{'}(Y^{\gamma}(t,x,m_{1}))
  (Y_{x}^{\gamma}(t,x,m_{1}))^{2}\mathrm{d}\Gamma(\gamma)\right\}=0,
\end{equation}
where
\begin{align*}
  \mathcal{\widetilde{A}}^{u}\widetilde{\phi}(t,x,m_{1})=&\widetilde{\phi}_{t}(t,x,m_{1})+\left[(A+\pi(\mu-r))x+Bm_{1}+Cm_{2}+a\eta+a\eta_{2}q\right]\widetilde{\phi}_{x}(t,x,m_{1})\\
  &+0.5\left(b^{2}q^{2}+\pi^{2}x^{2}\sigma^{2}\right)\widetilde{\phi}_{xx}(t,x,m_{1})+\left(x-\alpha m_{1}-e^{-\alpha h}m_{2}\right)\widetilde{\phi}_{m_{1}}(t,x,m_{1})
\end{align*}
for any $\widetilde{\phi}(t,x,m_{1})\in C^{1,2,1}(\mathcal{\widetilde{D}})$ and $u=(q,\pi)$ with $\mathcal{\widetilde{D}}=[0,T]\times\mathbb{R}\times\mathbb{R}$.

Having derived the equation \eqref{eq36} that characterizes the equilibrium reinsurance and investment strategy, we can perform partial differentiation on the terms within the brackets of equation \eqref{eq36} with respect to $q$ and $\pi$ separately. This step yields the candidate equilibrium reinsurance and investment strategy
\begin{equation}\label{eq37}
  \hat{q}(t)=-\frac{a \eta_{2}U_{x}(t,x,m_{1})}{b^{2}\left[U_{xx}(t,x,m_{1})-\int(\iota^{\gamma})^{'}(Y^{\gamma}(t,x,m_{1}))(Y_{x}^{\gamma}(t,x,m_{1}))^{2}\mathrm{d}\Gamma(\gamma)\right]}
\end{equation}
and
\begin{equation}\label{eq38}
\hat{\pi}(t)=-\frac{(\mu- r)U_{x}(t,x,m_{1})}{\sigma^{2}x\left[U_{xx}(t,x,m_{1})-\int(\iota^{\gamma})^{'}(Y^{\gamma}(t,x,m_{1}))(Y_{x}^{\gamma}(t,x,m_{1}))^{2}\mathrm{d}\Gamma(\gamma)\right]}.
\end{equation}
It follows from the equation \eqref{eq28} that
$
U(t,x,m_{1})=H(t,x,m_{1})
=\int(\varphi^{\gamma})^{-1}\left(Y^{\gamma}(t,x,m_{1})\right)\mathrm{d}\Gamma(\gamma).
$
Then, through the system of equations \eqref{eq35}, equations \eqref{eq37} and \eqref{eq38} admit the following reformulation
\begin{equation}\label{eq39}
  \hat{q}(t)=-\frac{a \eta_{2}\int\iota^{\gamma}(Y^{\gamma}(t,x,m_{1}))Y_{x}^{\gamma}(t,x,m_{1})\mathrm{d}\Gamma(\gamma)}{b^{2}\int\iota^{\gamma}(Y^{\gamma}(t,x,m_{1}))Y_{xx}^{\gamma}(t,x,m_{1})\mathrm{d}\Gamma(\gamma)}
\end{equation}
and
\begin{equation}\label{eq40}
   \hat{\pi}(t)=-\frac{(\mu- r)\int\iota^{\gamma}(Y^{\gamma}(t,x,m_{1}))Y_{x}^{\gamma}(t,x,m_{1})\mathrm{d}\Gamma(\gamma)}{\sigma^{2}x\int\iota^{\gamma}(Y^{\gamma}(t,x,m_{1}))Y_{xx}^{\gamma}(t,x,m_{1})\mathrm{d}\Gamma(\gamma)}.
\end{equation}
Henceforth, our analysis focuses on the characterization of $Y^{\gamma}(t,x,m_{1})$ for all $\gamma$.
\subsection{The exponential utility}
If $\varphi^{\gamma}$ follows the exponential utility given by the equation \eqref{eq49}, then equations \eqref{eq39} and \eqref{eq40} reduce to
$$
   \hat{q}(t)=-\frac{a \eta_{2} \int \frac{Y_{x}^{\gamma}(t,x,m_{1})}{\gamma Y^{\gamma}(t,x,m_{1})}\mathrm{d}\Gamma(\gamma)}
{b^{2}\int \frac{Y_{xx}^{\gamma}(t,x,m_{1})}{\gamma Y^{\gamma}(t,x,m_{1})}\mathrm{d}\Gamma(\gamma)},\quad
   \hat{\pi}(t)=-\frac{(\mu- r) \int \frac{Y_{x}^{\gamma}(t,x,m_{1})}{\gamma Y^{\gamma}(t,x,m_{1})}\mathrm{d}\Gamma(\gamma)}
{\sigma^{2}x\int \frac{Y_{xx}^{\gamma}(t,x,m_{1})}{\gamma Y^{\gamma}(t,x,m_{1})}\mathrm{d}\Gamma(\gamma)}.
$$
We postulate an ansatz where $Y^{\gamma}(t,x,m_{1})=-\frac{1}{\gamma}e^{g_{1}^{\gamma}(t)(x+\beta m_{1})+g_{2}^{\gamma}(t)}$ with $g_{1}^{\gamma}(t)$ and $g_{2}^{\gamma}(t)$ being deterministic functions of $t\in[0,T]$ with $g_{1}^{\gamma}(T)=-\gamma$ and $g_{2}^{\gamma}(T)=0$.

Next, the partial derivatives associated with the ansatz are derived as
\begin{eqnarray*}
\left\{
\begin{aligned}
   &Y^{\gamma}_{t}(t,x,m_{1})=Y^{\gamma}(t,x,m_{1})\left[\frac{\partial g_{1}^{\gamma}(t)}{\partial t}\left(x+\beta m_{1}\right)+\frac{\partial g_{2}^{\gamma}(t)}{\partial t}\right],\quad
   Y^{\gamma}_{x}(t,x,m_{1})=g_{1}^{\gamma}(t)Y^{\gamma}(t,x,m_{1}),\nonumber\\
   &Y^{\gamma}_{xx}(t,x,m_{1})=(g_{1}^{\gamma}(t))^{2}Y^{\gamma}(t,x,m_{1}),\quad
   Y^{\gamma}_{m_{1}}(t,x,m_{1})=\beta g_{1}^{\gamma}(t)Y^{\gamma}(t,x,m_{1})
\end{aligned}
\right.
\end{eqnarray*}
and the candidate equilibrium reinsurance and investment strategy can be given by
\begin{equation}\label{eq41}
  \hat{q}(t)=-\frac{a \eta_{2} \int \frac{g_{1}^{\gamma}(t)}{\gamma }\mathrm{d}\Gamma(\gamma)}
{b^{2}\int \frac{(g_{1}^{\gamma}(t))^{2}}{\gamma }\mathrm{d}\Gamma(\gamma)}
\end{equation}
and
\begin{equation}\label{eq42}
 \hat{\pi}(t)=-\frac{(\mu- r) \int \frac{g_{1}^{\gamma}(t)}{\gamma }\mathrm{d}\Gamma(\gamma)}
{\sigma^{2}x\int \frac{(g_{1}^{\gamma}(t))^{2}}{\gamma }\mathrm{d}\Gamma(\gamma)}.
\end{equation}
Inserting the derivatives of $Y^{\gamma}(t,x,m_{1})$ into the equation \eqref{eq21}, where $\mathcal{A}^{u}$ is replaced by $\mathcal{\widetilde{A}}^{u}$, we have
\begin{align*}
  0=&\frac{\partial g_{1}^{\gamma}(t)}{\partial t}(x+\beta m_{1})+\frac{\partial g_{2}^{\gamma}(t)}{\partial t}
  +\left[(A+\hat{\pi}(\mu-r))x+Bm_{1}+Cm_{2}+a\eta+a\eta_{2}\hat{q}\right]g_{1}^{\gamma}(t)\nonumber\\
&+0.5(b^{2}\hat{q}^{2}+\hat{\pi}^{2}x^{2}\sigma^{2})(g_{1}^{\gamma}(t))^{2}
+\left(x-\alpha m_{1}-e^{-\alpha h}m_{2}\right)\beta g_{1}^{\gamma}(t),
\end{align*}
where $\hat{q}$ and $\hat{\pi}$ are provided by equations \eqref{eq41} and \eqref{eq42}. Then, under the assumption $(A2)$, the above equation arrives at
\begin{align}\label{eq44}
  0=&\frac{\partial g_{1}^{\gamma}(t)}{\partial t}(x+\beta m_{1})+\frac{\partial g_{2}^{\gamma}(t)}{\partial t}
  +\left(A+\beta\right)\left(x+\beta m_{1}\right) g_{1}^{\gamma}(t)\nonumber\\
  &+\left[\hat{\pi}(\mu-r)x+a\eta+a\eta_{2}\hat{q}\right]g_{1}^{\gamma}(t)+0.5(b^{2}\hat{q}^{2}+\hat{\pi}^{2}x^{2}\sigma^{2})(g_{1}^{\gamma}(t))^{2}.
\end{align}

One can see that $\hat{q}$ and $\hat{\pi}x$ are independent of $x$ and $m_{1}$ due to equations \eqref{eq41} and \eqref{eq42}. Thus, applying the coefficient comparison method to the terms containing $x+\beta m_{1}$ in the equation \eqref{eq44} generates
\begin{equation}\label{eq45}
 \frac{\partial g_{1}^{\gamma}(t)}{\partial t}+\left(A+\beta\right) g_{1}^{\gamma}(t)=0
\end{equation}
and
\begin{equation}\label{eq46}
 \frac{\partial g_{2}^{\gamma}(t)}{\partial t}+\left[\hat{\pi}(\mu-r)x+a\eta+a\eta_{2}\hat{q}\right]g_{1}^{\gamma}(t)+0.5(b^{2}\hat{q}^{2}+\hat{\pi}^{2}x^{2}\sigma^{2})(g_{1}^{\gamma}(t))^{2}=0.
\end{equation}
With the boundary condition $g_{1}^{\gamma}(T)=-\gamma$, solving the equation \eqref{eq45} gives $g_{1}^{\gamma}(t)=-\gamma e^{(A+\beta)(T-t)}$. Since $g_{2}^{\gamma}(T)=0$, substituting the expression of $g_{1}^{\gamma}(t)$ into equations \eqref{eq41}, \eqref{eq42} and \eqref{eq46} yields
\begin{equation}\label{eq47}
\hat{q}(t)=\frac{a \eta_{2}}{b^{2}\mathbb{E}[\gamma]}e^{-(A+\beta)(T-t)},\quad
   \hat{\pi}(t)=\frac{\mu-r}{\sigma^{2}x\mathbb{E}[\gamma]}e^{-(A+\beta)(T-t)},
\end{equation}
and
\begin{equation}\label{eq48}
g_{2}^{\gamma}(t)=\frac{\gamma a \eta}{A+\beta}\left[1-e^{(A+\beta)(T-t)}\right]-D(T-t),
\end{equation}
where
$
D=\gamma\left[\frac{(\mu-r)^{2}}{\sigma^{2}\mathbb{E}[\gamma]}+\frac{a^{2} \eta^{2}_{2}}{b^{2}\mathbb{E}[\gamma]}\right]
-0.5\gamma^{2}\left[\frac{(\mu-r)^{2}}{\sigma^{2}\mathbb{E}^{2}[\gamma]}+\frac{a^{2} \eta^{2}_{2}}{b^{2}\mathbb{E}^{2}[\gamma]}\right].
$
By using the equation \eqref{eq28}, one can further derive that the candidate equilibrium value function is
\begin{equation}\label{eq50}
U(t,x,m_{1})=(x+\beta m_{1})e^{(A+\beta)(T-t)}-\frac{a \eta}{A+\beta}\left[1-e^{(A+\beta)(T-t)}\right]
+0.5\left[\frac{(\mu-r)^{2}}{\sigma^{2}\mathbb{E}[\gamma]}+\frac{a^{2} \eta^{2}_{2}}{b^{2}\mathbb{E}[\gamma]}\right](T-t).
\end{equation}

In addition, provided the expression for the candidate equilibrium reinsurance and investment strategy, one can present the following sensitivity analysis on the effects of the delay parameters on $(\hat{q}(t), \hat{\pi}(t)x)$.

\begin{proposition}\label{proposition4.1}
If the candidate equilibrium reinsurance and investment strategy is captured by the equation \eqref{eq47}, then one has $\frac{\partial \hat{q}(t)}{\partial h}<0$, $\frac{\partial [\hat{\pi}(t)x]}{\partial h}<0$ and
\begin{align}\label{eq70}
sgn\left(\frac{\partial \hat{f}(t)}{\partial \alpha}\right)=
\begin{cases}
1, \quad\quad\text{for}\; \quad\, \alpha>-\frac{1}{h}\ln\frac{1}{h},\\
0, \quad\quad\text{for}\;\quad\,  \alpha=-\frac{1}{h}\ln\frac{1}{h},\\
-1,  \quad\;\text{for}\;\quad\,  \alpha<-\frac{1}{h}\ln\frac{1}{h},
\end{cases}
\end{align}
where $\hat{f}(t)\in\left\{\hat{q}(t),\hat{\pi}(t)x\right\}$. Furthermore, if $r+\alpha<1$, then one can obtain that
\begin{align}\label{eq71}
sgn\left(\frac{\partial \hat{f}(t)}{\partial \beta}\right)=
\begin{cases}
1, \quad\quad\text{for}\; \quad\, h<-\frac{1}{\alpha}\ln(1-r-\alpha),\\
0, \quad\quad\text{for}\;\quad\,  h=-\frac{1}{\alpha}\ln(1-r-\alpha),\\
-1,  \quad\;\text{for}\;\quad\,  h>-\frac{1}{\alpha}\ln(1-r-\alpha).
\end{cases}
\end{align}
\end{proposition}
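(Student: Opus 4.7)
The plan is to reduce everything to a single scalar quantity, namely the exponent $A+\beta$, and then do elementary calculus. First I would use assumption $(A2)$: the two relations $C=\beta e^{-\alpha h}$ and $Be^{-\alpha h}=(\alpha+A+\beta)C$ give $B=(\alpha+A+\beta)\beta$; combining with $A=r-B-C$ and solving the resulting linear equation in $A$ yields the explicit formula
\[
A+\beta=\frac{r+\beta(1-e^{-\alpha h}-\alpha)}{1+\beta}.
\]
The point of this reformulation is that both $\hat q(t)$ and $\hat\pi(t)x$ in \eqref{eq47} are strictly positive constants times $e^{-(A+\beta)(T-t)}$, so their logarithms are identical affine functions of $A+\beta$. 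Consequently, for any one of the parameters $\theta\in\{h,\alpha,\beta\}$,
\[
\frac{\partial \hat f(t)}{\partial\theta}=-(T-t)\,\hat f(t)\,\frac{\partial(A+\beta)}{\partial\theta},\qquad \hat f(t)\in\{\hat q(t),\hat\pi(t)x\},
\]
so the sign of $\partial\hat f/\partial\theta$ is the opposite of the sign of $\partial(A+\beta)/\partial\theta$ (using $T>t$ and $\hat f(t)>0$).

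Next I would just differentiate the closed-form expression above three times. For $h$: $\partial(A+\beta)/\partial h=\alpha\beta e^{-\alpha h}/(1+\beta)$, which is strictly positive, immediately yielding $\partial\hat q/\partial h<0$ and $\partial(\hat\pi x)/\partial h<0$. For $\alpha$: $\partial(A+\beta)/\partial\alpha=\beta(he^{-\alpha h}-1)/(1+\beta)$, and the sign of $he^{-\alpha h}-1$ is controlled by the inequality $\alpha h\gtrless\ln h$, i.e.\ $\alpha\gtrless-\frac{1}{h}\ln\frac{1}{h}$; flipping the sign gives exactly the trichotomy \eqref{eq70}. For $\beta$: a short quotient-rule computation gives $\partial(A+\beta)/\partial\beta=(K-r)/(1+\beta)^2$ where $K:=1-e^{-\alpha h}-\alpha$, so its sign is that of $K-r=1-r-\alpha-e^{-\alpha h}$. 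Under the hypothesis $r+\alpha<1$, the condition $K-r>0$ rearranges to $e^{-\alpha h}<1-r-\alpha$, i.e.\ $h>-\frac{1}{\alpha}\ln(1-r-\alpha)$; flipping the sign once more produces the trichotomy \eqref{eq71}.

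There is no serious analytic obstacle here; the only delicate point is bookkeeping. The main thing to be careful about is the sign flip coming from the factor $-(T-t)$ in front of $\partial(A+\beta)/\partial\theta$, which is what turns ``$h$ above a threshold'' into ``derivative negative'' in the $\beta$-case and what makes the inequality $\alpha>-\frac{1}{h}\ln\frac{1}{h}$ correspond to a \emph{positive} derivative in the $\alpha$-case. The assumptions $\beta>0$ and $\alpha>0$ (used implicitly when rearranging exponentials) and $r+\alpha<1$ in the $\beta$-case are exactly what make the threshold values well defined, so no further case analysis is needed beyond the three listed sign cases.
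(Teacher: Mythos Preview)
Your proposal is correct and follows essentially the same route as the paper: both use assumption $(A2)$ together with $A=r-B-C$ to obtain the closed form $A+\beta=\dfrac{r+\beta(1-e^{-\alpha h}-\alpha)}{1+\beta}$ (equivalently, the paper writes the exponent of $\hat q$ as $-r+\frac{\beta}{1+\beta}(r+\alpha+e^{-\alpha h}-1)$), then differentiate this expression with respect to $h$, $\alpha$, $\beta$ and read off the signs. Your formulation via $\partial\hat f/\partial\theta=-(T-t)\hat f(t)\,\partial(A+\beta)/\partial\theta$ is just a compact repackaging of the paper's three displayed derivatives, and your threshold computations coincide with theirs.
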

\begin{proof}
See Appendix E.
\end{proof}
Based on the above investigations, we are now in a position to derive the following result addressing the equilibrium reinsurance and investment strategy when the utility function $\varphi^{\gamma}$ follows the exponential utility.
\begin{theorem}\label{theorem4.1}
Assume that the utility function $\varphi^{\gamma}$ is given by the equation \eqref{eq49} with $\gamma\in[\epsilon_{1},\infty)$ and the assumption $(A2)$ holds. Then the equilibrium reinsurance and investment strategy and the corresponding equilibrium value function are given by equations \eqref{eq47} and \eqref{eq50}, respectively.
\end{theorem}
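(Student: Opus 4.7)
The plan is to verify Theorem 4.1 by invoking the verification theorem (Theorem 3.1): I will exhibit explicit candidates $U$, $Y^{\gamma}$, $H\in C^{1,2,1}(\mathcal{D})$, check that they satisfy the pseudo HJB system and the boundary conditions, and finally confirm the admissibility of $\hat{u}$ together with the integrability hypothesis $(A1)$. The candidates are read off from the derivation preceding the statement: namely $Y^{\gamma}(t,x,m_{1})=-\gamma^{-1}\exp\{g_{1}^{\gamma}(t)(x+\beta m_{1})+g_{2}^{\gamma}(t)\}$ with $g_{1}^{\gamma}(t)=-\gamma e^{(A+\beta)(T-t)}$ and $g_{2}^{\gamma}(t)$ given by \eqref{eq48}, the function $U$ as in \eqref{eq50}, and $H$ defined through \eqref{eq22}. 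These are clearly smooth, and their boundary values at $t=T$ are immediate from $g_{1}^{\gamma}(T)=-\gamma$ and $g_{2}^{\gamma}(T)=0$.

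First I would record the identity $H=U$: substituting the explicit form of $Y^{\gamma}$ into $(\varphi^{\gamma})^{-1}(y)=-\gamma^{-1}\ln(-\gamma y)$ produces $-\gamma^{-1}[g_{1}^{\gamma}(t)(x+\beta m_{1})+g_{2}^{\gamma}(t)]$, and integrating this expression against $\Gamma$ collapses the $\gamma$-dependent terms via $\int \gamma^{-1}D\,\mathrm{d}\Gamma(\gamma)$ to reproduce exactly the coefficient $0.5[(\mu-r)^{2}/(\sigma^{2}\mathbb{E}[\gamma])+a^{2}\eta_{2}^{2}/(b^{2}\mathbb{E}[\gamma])]$ appearing in \eqref{eq50}. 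Next I would verify \eqref{eq21} for $Y^{\gamma}$: inserting the candidate strategy \eqref{eq47} into $\mathcal{A}^{\hat{u}}Y^{\gamma}$ reduces to the identity \eqref{eq43}, which, under $(A2)$, collapses to \eqref{eq44}; the split into \eqref{eq45}--\eqref{eq46} then holds by coefficient matching since $\hat{q}$ and $\hat{\pi}x$ are independent of $(x,m_{1})$. Because $H=U$, the pseudo HJB equation \eqref{eq20} is equivalent to \eqref{eq36}, whose supremum is attained at $\hat{u}$ given by \eqref{eq37}--\eqref{eq38} (which for the exponential ansatz reduce to \eqref{eq47}); substituting $\hat{u}$ back and using \eqref{eq45}--\eqref{eq46} makes the residual vanish.

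The more delicate part is the admissibility of $\hat{u}$ and verification of $(A1)$. Since $\hat{q}(t)$ is a bounded, strictly positive, deterministic function and $\hat{\pi}(t)X^{\hat{u}}(t)$ is also a bounded deterministic function of $t$, the controlled SDDE \eqref{eq11} becomes a linear SDDE with bounded deterministic coefficients; standard results on linear delay SDEs (together with Lemma 3.2) yield the existence of a unique strong solution $X^{\hat{u}}\in\mathbb{S}^{2}_{\mathcal{F}}(0,T;\mathbb{R})$ with finite moments of every order, and $\mathbb{E}[\int_{0}^{T}(\hat{q}^{2}+\hat{\pi}^{2}(X^{\hat{u}})^{2})\mathrm{d}t]<\infty$ is immediate. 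For $(A1)$ applied to $U$, note that $U_{x}=e^{(A+\beta)(T-t)}$ is bounded, so $b\hat{q}U_{x}$ and $\hat{\pi}X^{\hat{u}}\sigma U_{x}$ lie in $\mathbb{L}^{2}_{\mathcal{F}}$, and $\mathcal{A}^{\hat{u}}U\in\mathbb{L}^{1}_{\mathcal{F}}$ follows from Lemma 3.2 combined with the moment bounds on $X^{\hat{u}}$, $M_{1}^{\hat{u}}$, $M_{2}^{\hat{u}}$.

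The main obstacle will be $(A1)$ for $Y^{\gamma}$ and condition (iii) in Definition 3.1, because $Y^{\gamma}$ depends exponentially on $X^{\hat{u}}+\beta M_{1}^{\hat{u}}$ with coefficient $g_{1}^{\gamma}(t)=-\gamma e^{(A+\beta)(T-t)}$. To handle this, I would exploit that $X^{\hat{u}}$ solves a linear SDDE whose diffusion coefficients $b\hat{q}$ and $\hat{\pi}X^{\hat{u}}\sigma$ are bounded deterministic functions, so $X^{\hat{u}}(t)+\beta M_{1}^{\hat{u}}(t)$ is a Gaussian-like process with deterministic mean and bounded variance; this would give finite exponential moments $\mathbb{E}[\exp\{\kappa(X^{\hat{u}}(t)+\beta M_{1}^{\hat{u}}(t))\}]$ for every $\kappa\in\mathbb{R}$, which control both $Y^{\gamma}$, its derivatives, and the integrals $bq Y^{\gamma}_{x}$ and $\pi X^{\hat{u}}\sigma Y^{\gamma}_{x}$. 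The restriction $\gamma\in[\epsilon_{1},\infty)$ in the hypothesis is used precisely here, because it prevents blow-up in the integrand of $\int (\varphi^{\gamma})^{-1}(\cdot)\mathrm{d}\Gamma(\gamma)$ as $\gamma\downarrow 0$; decay as $\gamma\to\infty$ is controlled by the $\gamma^{-1}\ln(\cdot)$ prefactor and the cumulative distribution $\Gamma$. Combining these moment estimates with Fubini's theorem and dominated convergence would deliver both (iii) in Definition 3.1 and the remaining parts of $(A1)$, completing the verification.
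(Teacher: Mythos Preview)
Your proposal is correct and follows the same overall scheme as the paper: exhibit the explicit candidates, check the pseudo HJB system and terminal conditions, verify admissibility of $\hat{u}$, and confirm $(A1)$ for $U$, $H$ and $Y^{\gamma}$. The verification of the pseudo HJB equations, the identity $H=U$, and the treatment of $(A1)$ for the linear function $U$ are essentially identical to the paper's argument in Appendix~E.

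Where you diverge is in handling $(A1)$ for $Y^{\gamma}$. The paper expands $|Y^{\gamma}(t,X^{\hat{u}}(t),M_{1}^{\hat{u}}(t))|^{2}$ as a product of four exponential factors, bounds the product by a sum via convexity, and then shows each summand lies in $\mathbb{S}^{1}_{\mathcal{F}}$ by introducing auxiliary processes $M_{3},\ldots,M_{9}$ that solve linear SDEs with random coefficients and invoking general existence results (Lemma~2.1 in \cite{Sun2021}, Theorem~1.3.15 in \cite{Pham2009}). Your route---observing that under $\hat{u}$ the SDDE for $X^{\hat{u}}$ is linear with \emph{deterministic, bounded} diffusion coefficients and deterministic initial segment, hence $X^{\hat{u}}(t)+\beta M_{1}^{\hat{u}}(t)$ is a Gaussian process with uniformly bounded variance on $[0,T]$, so all exponential moments are finite---is more direct and yields a cleaner argument tailored to the exponential case. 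The paper's decomposition is heavier but does not depend on identifying the law of the state process. One small quibble: your stated reason for the restriction $\gamma\ge\epsilon_{1}$ (blow-up of the integrand in condition~(iii) as $\gamma\downarrow 0$) is not quite accurate, since $\gamma^{-1}[g_{1}^{\gamma}(t)(x+\beta m_{1})+g_{2}^{\gamma}(t)]$ actually remains bounded as $\gamma\to 0$; the lower bound on $\gamma$ is rather a technical convenience for uniform control of the prefactor $\gamma^{-1}$ in $Y^{\gamma}$ itself and for differentiating under the $\Gamma$-integral defining $H$.
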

\begin{proof}
See Appendix F.
\end{proof}

\begin{corollary}
If $\alpha=h=B=C=\beta=0$, which implies that the model does not include delay, then the equilibrium reinsurance and investment strategy reduces to
\begin{equation}\label{eq64}
 \hat{q}(t)=\frac{a \eta_{2}}{b^{2}\mathbb{E}[\gamma]}e^{-r(T-t)},\quad
   \hat{\pi}(t)=\frac{\mu-r}{\sigma^{2}x\mathbb{E}[\gamma]}e^{-r(T-t)}
\end{equation}
and the corresponding equilibrium value function becomes
$$
V(t,x)=xe^{r(T-t)}-\frac{a \eta}{r}\left[1-e^{r(T-t)}\right]
+0.5\left[\frac{(\mu-r)^{2}}{\sigma^{2}\mathbb{E}[\gamma]}+\frac{a^{2} \eta^{2}_{2}}{b^{2}\mathbb{E}[\gamma]}\right](T-t).
$$
\end{corollary}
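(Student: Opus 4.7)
The plan is to observe that this corollary is a direct specialization of Theorem \ref{theorem4.1}: all one needs to do is verify that the hypotheses of that theorem remain valid when the delay-related parameters are set to zero, and then substitute those values into the formulas \eqref{eq47} and \eqref{eq50}. Since the utility function $\varphi^{\gamma}$ is unchanged (still the exponential utility \eqref{eq49} with $\gamma\in[\epsilon_{1},\infty)$), the only thing to check up front is assumption $(A2)$.

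First I would verify $(A2)$ under $\alpha=h=B=C=\beta=0$. With $\beta=0$, the identity $C=\beta e^{-\alpha h}$ collapses to $0=0$, and with $B=C=0$ the second identity $Be^{-\alpha h}=(\alpha+A+\beta)C$ likewise collapses to $0=0$. Hence $(A2)$ is trivially satisfied. Next I would note that since $A=r-B-C$, the specialization $B=C=0$ yields $A=r$, and together with $\beta=0$ this gives $A+\beta=r$. Moreover, the wealth SDDE \eqref{eq11} degenerates to the standard controlled SDE of an insurer with surplus diffusion and a Black--Scholes risky asset (no delay terms), so $X^{u}$ depends only on $(t,x)$ and the admissibility conditions in Definition \ref{definition3.1} reduce to their classical analogues, which are automatically inherited from Theorem \ref{theorem4.1}.

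Second, I would substitute $A+\beta=r$ directly into the candidate equilibrium strategy \eqref{eq47} to recover
\[
\hat{q}(t)=\frac{a\eta_{2}}{b^{2}\mathbb{E}[\gamma]}e^{-r(T-t)},\qquad
\hat{\pi}(t)=\frac{\mu-r}{\sigma^{2}x\mathbb{E}[\gamma]}e^{-r(T-t)},
\]
which matches \eqref{eq64}. Third, plugging $\beta=0$ (so that the $\beta m_{1}$ term vanishes) and $A+\beta=r$ into the candidate equilibrium value function \eqref{eq50} gives
\[
V(t,x)=xe^{r(T-t)}-\frac{a\eta}{r}\left[1-e^{r(T-t)}\right]+0.5\left[\frac{(\mu-r)^{2}}{\sigma^{2}\mathbb{E}[\gamma]}+\frac{a^{2}\eta_{2}^{2}}{b^{2}\mathbb{E}[\gamma]}\right](T-t),
\]
as claimed; in particular there is no longer any dependence on $m_{1}$, consistent with the disappearance of the delay.

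There is essentially no genuine obstacle here, since the statement is a routine parameter specialization and the admissibility arguments used in Theorem \ref{theorem4.1} carry over verbatim (in fact they simplify, because the delay machinery vanishes). The only thing worth flagging, and which I would include in the write-up for clarity, is the verification that $(A2)$ holds in this degenerate setting, so that the invocation of Theorem \ref{theorem4.1} is legitimate; the remainder of the proof is pure substitution.
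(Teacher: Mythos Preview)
Your proposal is correct and follows exactly the approach the paper intends: the corollary is stated without proof as an immediate specialization of Theorem \ref{theorem4.1}, and your verification that $(A2)$ holds trivially together with the substitution $A+\beta=r$, $\beta=0$ into \eqref{eq47} and \eqref{eq50} is precisely the routine check underlying it.
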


\begin{remark}
Note that the equilibrium reinsurance strategy in \eqref{eq64} is the same as the one in the equation (4.11) of \cite{Kang2026} and the equilibrium investment strategy in \eqref{eq64} is equivalent to the one on page 966 of \cite{Desmettre2023}.
\end{remark}

\begin{corollary}
If $\gamma$ is assumed to be a constant, then the equilibrium reinsurance and investment strategy can be given by
\begin{equation}\label{eq65}
\hat{q}(t)=\frac{a \eta_{2}}{b^{2}\gamma}e^{-(A+\beta)(T-t)},\quad
   \hat{\pi}(t)=\frac{\mu-r}{\sigma^{2}x\gamma}e^{-(A+\beta)(T-t)}
\end{equation}
and the corresponding equilibrium value function satisfies
$$
V(t,x,m_{1})=(x+\beta m_{1})e^{(A+\beta)(T-t)}-\frac{a \eta}{A+\beta}\left[1-e^{(A+\beta)(T-t)}\right]
+0.5\left[\frac{(\mu-r)^{2}}{\sigma^{2}\gamma}+\frac{a^{2} \eta^{2}_{2}}{b^{2}\gamma}\right](T-t).
$$
\end{corollary}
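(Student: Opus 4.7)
The plan is to obtain this corollary as a degenerate instance of Theorem \ref{theorem4.1} by specializing the distribution $\Gamma$ to a Dirac mass concentrated at the deterministic value $\gamma$. First I would note that if $\gamma$ is constant, then $\Gamma = \delta_{\gamma}$, so for every Borel measurable $f$ one has $\int f(\gamma')\,\mathrm{d}\Gamma(\gamma') = f(\gamma)$. In particular, $\mathbb{E}[\gamma] = \int \gamma'\,\mathrm{d}\Gamma(\gamma') = \gamma$, and the support condition $\gamma \in [\epsilon_1,\infty)$ required by Theorem \ref{theorem4.1} is fulfilled trivially by choosing any $\epsilon_1 \in (0,\gamma]$. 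Assumption $(A2)$ is a structural condition on the model parameters that does not interact with $\gamma$, so it continues to hold.

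Next, I would invoke Theorem \ref{theorem4.1} under assumption $(A2)$ to conclude that the candidate triple $(\hat q,\hat \pi, V)$ supplied by \eqref{eq47} and \eqref{eq50} is indeed the equilibrium reinsurance and investment strategy together with the equilibrium value function. Substituting $\mathbb{E}[\gamma] = \gamma$ directly into \eqref{eq47} produces the two expressions
\[
\hat q(t) = \frac{a \eta_{2}}{b^{2}\gamma}e^{-(A+\beta)(T-t)},\qquad \hat\pi(t) = \frac{\mu-r}{\sigma^{2}x\gamma}e^{-(A+\beta)(T-t)},
\]
while the same substitution into \eqref{eq50} yields the advertised form of $V(t,x,m_{1})$. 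Thus the claimed formulas are obtained by a single evaluation of the $\Gamma$-integrals at the atom of $\Gamma$.

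Because the argument is essentially a substitution grounded in a theorem that is already proved in the paper, I do not anticipate any substantive obstacle. The only point worth verifying is that the admissibility conditions of Definition \ref{definition3.1} and the verification hypotheses $(A1)$ of Theorem \ref{theorem3.1} carry over to the constant-$\gamma$ case. This is immediate: the integrals over $\Gamma$ collapse to point evaluations, the deterministic functions $g_{1}^{\gamma}$ and $g_{2}^{\gamma}$ remain smooth in $t$, and the moment and integrability bounds used in the proof of Theorem \ref{theorem4.1} are inherited without modification. Hence the corollary follows at once.
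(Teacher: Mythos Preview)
Your proposal is correct and mirrors exactly the paper's intent: the corollary is an immediate specialization of Theorem \ref{theorem4.1} obtained by taking $\Gamma=\delta_{\gamma}$, whence $\mathbb{E}[\gamma]=\gamma$ and the formulas \eqref{eq47} and \eqref{eq50} reduce to the stated expressions. The paper provides no separate proof for this corollary, so your direct substitution argument is precisely what is expected.
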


\begin{remark}
Note that the equilibrium reinsurance strategy in \eqref{eq65} is consistent with the one in the equation (2.9) of \cite{A2020} and the equilibrium investment amount $\hat{\pi}(t)x$ in \eqref{eq65} coincides with the ones with $\beta=0$ in equations (3.11) and (3.14) of \cite{A2018}, which means that under singular risk aversion in the Black-Scholes financial market, the equilibrium strategy solving the problem \eqref{eq13} is mathematically equivalent to the optimal strategy solving the problem \eqref{eq12}.
\end{remark}

\subsection{The power utility}
Assuming that the function $\varphi^{\gamma}$ adopts a power utility form
\begin{equation}\label{eq51}
\varphi^{\gamma}(\cdot)=\frac{1}{1-\gamma}(\cdot)^{1-\gamma},
\end{equation}
where $\gamma>0$, we can derive its inverse function and the derivative of the inverse function as shown below
$$
(\varphi^{\gamma})^{-1}(\cdot)=(1-\gamma)^{\frac{1}{1-\gamma}}(\cdot)^{\frac{1}{1-\gamma}},\quad
\frac{\mathrm{d}}{\mathrm{d} y}(\varphi^{\gamma})^{-1}(y)=(1-\gamma)^{\frac{\gamma}{1-\gamma}}(y)^{\frac{\gamma}{1-\gamma}}.
$$
Equations \eqref{eq39} and \eqref{eq40} consequently yield the reduced formulation
\begin{equation*}
   \left\{ \begin{aligned}
   &\hat{q}(t)=-\frac{a \eta_{2} \int (1-\gamma)^{\frac{\gamma}{1-\gamma}}\left(Y^{\gamma}(t,x,m_{1})\right)^{\frac{\gamma}{1-\gamma}} Y_{x}^{\gamma}(t,x,m_{1})\mathrm{d}\Gamma(\gamma)}
{b^{2}\int (1-\gamma)^{\frac{\gamma}{1-\gamma}}\left(Y^{\gamma}(t,x,m_{1})\right)^{\frac{\gamma}{1-\gamma}} Y_{xx}^{\gamma}(t,x,m_{1})\mathrm{d}\Gamma(\gamma)},\\
   &\hat{\pi}(t)=-\frac{(\mu- r) \int (1-\gamma)^{\frac{\gamma}{1-\gamma}}\left(Y^{\gamma}(t,x,m_{1})\right)^{\frac{\gamma}{1-\gamma}} Y_{x}^{\gamma}(t,x,m_{1})\mathrm{d}\Gamma(\gamma)}
{\sigma^{2}x\int (1-\gamma)^{\frac{\gamma}{1-\gamma}}\left(Y^{\gamma}(t,x,m_{1})\right)^{\frac{\gamma}{1-\gamma}} Y_{xx}^{\gamma}(t,x,m_{1})\mathrm{d}\Gamma(\gamma)}.
  \end{aligned}\right.
\end{equation*}
We propose an ansatz in which $Y^{\gamma}(t,x,m_{1})=\frac{1}{1-\gamma}(g^{\gamma}(t))^{\gamma}(x+\beta m_{1})^{1-\gamma}$ with $g^{\gamma}(t)$ being a deterministic function of $t\in[0,T]$ with $g^{\gamma}(T)=1$.

Now we carry out the derivation of partial derivatives linked to the above ansatz, arriving at
\begin{eqnarray*}
\left\{
\begin{aligned}
   &Y^{\gamma}_{t}(t,x,m_{1})=\frac{\gamma}{1-\gamma}(g^{\gamma}(t))^{\gamma-1}(x+\beta m_{1})^{1-\gamma}\frac{\partial g^{\gamma}(t)}{\partial t},\quad
   Y^{\gamma}_{x}(t,x,m_{1})=(g^{\gamma}(t))^{\gamma}(x+\beta m_{1})^{-\gamma},\nonumber\\
   &Y^{\gamma}_{xx}(t,x,m_{1})=-\gamma(g^{\gamma}(t))^{\gamma}(x+\beta m_{1})^{-\gamma-1},\quad
   Y^{\gamma}_{m_{1}}(t,x,m_{1})=\beta(g^{\gamma}(t))^{\gamma}(x+\beta m_{1})^{-\gamma}.
\end{aligned}
\right.
\end{eqnarray*}
Substituting the above derivatives into the candidate equilibrium reinsurance and investment strategy yields
\begin{equation}\label{eq52}
  \hat{q}(t)=\frac{a \eta_{2} (x+\beta m_{1})}
{b^{2}\varpi(t,\Gamma)}
\end{equation}
and
\begin{equation}\label{eq53}
 \hat{\pi}(t)=\frac{(\mu- r) (x+\beta m_{1})}
{\sigma^{2}x\varpi(t,\Gamma)},
\end{equation}
where
\begin{equation}\label{eq54}
\varpi(t,\Gamma)=\frac{\int (g^{\gamma}(t))^{\frac{\gamma}{1-\gamma}}\gamma\mathrm{d}\Gamma(\gamma)}{\int (g^{\gamma}(t))^{\frac{\gamma}{1-\gamma}}\mathrm{d}\Gamma(\gamma)}.
\end{equation}
In addition, replacing $\mathcal{A}^{u}$ by $\mathcal{\widetilde{A}}^{u}$ in the equation \eqref{eq21} and inserting the above partial derivatives of $Y^{\gamma}(t,x,m_{1})$ into the equation \eqref{eq21} allow us to derive
\begin{align*}
  0=&\frac{\gamma}{1-\gamma}(g^{\gamma}(t))^{\gamma-1}(x+\beta m_{1})^{1-\gamma}\frac{\partial g^{\gamma}(t)}{\partial t}\nonumber\\
  &+\left[(A+\hat{\pi}(\mu-r))x+Bm_{1}+Cm_{2}+a\eta+a\eta_{2}\hat{q}\right](g^{\gamma}(t))^{\gamma}(x+\beta m_{1})^{-\gamma}\nonumber\\
&-0.5\gamma(b^{2}\hat{q}^{2}+\hat{\pi}^{2}x^{2}\sigma^{2})(g^{\gamma}(t))^{\gamma}(x+\beta m_{1})^{-\gamma-1}
+\beta\left(x-\alpha m_{1}-e^{-\alpha h}m_{2}\right)(g^{\gamma}(t))^{\gamma}(x+\beta m_{1})^{-\gamma}.
\end{align*}
If we assume that $\eta=\eta_{1}-\eta_{2}=0$ and the condition $(A2)$ holds, then we can obtain that
\begin{align}\label{eq55}
  \frac{\partial g^{\gamma}(t)}{\partial t}=\frac{1-\gamma}{\gamma}\left[-A-\beta-\frac{(\mu- r)^{2}}
{\sigma^{2}\varpi(t,\Gamma)}-\frac{a^{2} \eta^{2}_{2}}{b^{2}\varpi(t,\Gamma)}+\frac{0.5\gamma a^{2} \eta^{2}_{2}}
{b^{2}(\varpi(t,\Gamma))^{2}}+\frac{0.5\gamma(\mu- r)^{2}}
{\sigma^{2}(\varpi(t,\Gamma))^{2}}\right] g^{\gamma}(t).
\end{align}
By differentiating $\varpi(t,\Gamma)$ with respect to $t$, we further have
\begin{align*}
  \frac{\partial \varpi(t,\Gamma)}{\partial t}=&\frac{\left(\int (g^{\gamma}(t))^{\frac{\gamma}{1-\gamma}}\mathrm{d}\Gamma(\gamma)\right)\left(\int \gamma\frac{\gamma}{1-\gamma} (g^{\gamma}(t))^{\frac{\gamma}{1-\gamma}-1}\frac{\partial g^{\gamma}(t)}{\partial t}\mathrm{d}\Gamma(\gamma)\right)}{\left(\int (g^{\gamma}(t))^{\frac{\gamma}{1-\gamma}}\mathrm{d}\Gamma(\gamma)\right)^{2}}\nonumber\\
  &-\frac{\left(\int \frac{\gamma}{1-\gamma}(g^{\gamma}(t))^{\frac{\gamma}{1-\gamma}-1}\frac{\partial g^{\gamma}(t)}{\partial t}\mathrm{d}\Gamma(\gamma)\right)\left(\int (g^{\gamma}(t))^{\frac{\gamma}{1-\gamma}}\gamma\mathrm{d}\Gamma(\gamma)\right)}{\left(\int (g^{\gamma}(t))^{\frac{\gamma}{1-\gamma}}\mathrm{d}\Gamma(\gamma)\right)^{2}}\nonumber\\
  =&\int \gamma (g^{\gamma}(t))^{\frac{\gamma}{1-\gamma}}\left[-A-\beta-\frac{(\mu- r)^{2}}
{\sigma^{2}\varpi(t,\Gamma)}-\frac{a^{2} \eta^{2}_{2}}{b^{2}\varpi(t,\Gamma)}+\frac{0.5\gamma a^{2} \eta^{2}_{2}}
{b^{2}(\varpi(t,\Gamma))^{2}}+\frac{0.5\gamma(\mu- r)^{2}}
{\sigma^{2}(\varpi(t,\Gamma))^{2}}\right]\mathrm{d}\Gamma(\gamma)\nonumber\\
&\times\frac{\int (g^{\gamma}(t))^{\frac{\gamma}{1-\gamma}}\mathrm{d}\Gamma(\gamma)}{\left(\int (g^{\gamma}(t))^{\frac{\gamma}{1-\gamma}}\mathrm{d}\Gamma(\gamma)\right)^{2}}-\frac{\int (g^{\gamma}(t))^{\frac{\gamma}{1-\gamma}}\gamma\mathrm{d}\Gamma(\gamma)}{\left(\int (g^{\gamma}(t))^{\frac{\gamma}{1-\gamma}}\mathrm{d}\Gamma(\gamma)\right)^{2}}\nonumber\\
&\times\int (g^{\gamma}(t))^{\frac{\gamma}{1-\gamma}}\left[-A-\beta-\frac{(\mu- r)^{2}}
{\sigma^{2}\varpi(t,\Gamma)}-\frac{a^{2} \eta^{2}_{2}}{b^{2}\varpi(t,\Gamma)}+\frac{0.5\gamma a^{2} \eta^{2}_{2}}
{b^{2}(\varpi(t,\Gamma))^{2}}+\frac{0.5\gamma(\mu- r)^{2}}
{\sigma^{2}(\varpi(t,\Gamma))^{2}}\right]\mathrm{d}\Gamma(\gamma),
\end{align*}
where we apply the equation \eqref{eq55} in the second equality. By further simplification, we can get
\begin{align}\label{eq56}
  \frac{\partial \varpi(t,\Gamma)}{\partial t}=&\frac{ 0.5}
{(\varpi(t,\Gamma))^{2}}\left(\frac{ a^{2} \eta^{2}_{2}}
{b^{2}}+\frac{(\mu- r)^{2}}
{\sigma^{2}}\right)\times\frac{\left(\int (g^{\gamma}(t))^{\frac{\gamma}{1-\gamma}}\mathrm{d}\Gamma(\gamma)\right)\left(\int \gamma^{2}(g^{\gamma}(t))^{\frac{\gamma}{1-\gamma}}\mathrm{d}\Gamma(\gamma)\right)}{\left(\int (g^{\gamma}(t))^{\frac{\gamma}{1-\gamma}}\mathrm{d}\Gamma(\gamma)\right)^{2}}\nonumber\\
&-\frac{ 0.5}
{(\varpi(t,\Gamma))^{2}}\left(\frac{ a^{2} \eta^{2}_{2}}
{b^{2}}+\frac{(\mu- r)^{2}}
{\sigma^{2}}\right)\times\left(\frac{\int (g^{\gamma}(t))^{\frac{\gamma}{1-\gamma}}\gamma\mathrm{d}\Gamma(\gamma)}{\int (g^{\gamma}(t))^{\frac{\gamma}{1-\gamma}}\mathrm{d}\Gamma(\gamma)}\right)^{2}\nonumber\\
=&\frac{ 0.5}
{(\varpi(t,\Gamma))^{2}}\left(\frac{ a^{2} \eta^{2}_{2}}
{b^{2}}+\frac{(\mu- r)^{2}}
{\sigma^{2}}\right)\times\left[\frac{\int \gamma^{2}(g^{\gamma}(t))^{\frac{\gamma}{1-\gamma}}\mathrm{d}\Gamma(\gamma)}{\int (g^{\gamma}(t))^{\frac{\gamma}{1-\gamma}}\mathrm{d}\Gamma(\gamma)}-(\varpi(t,\Gamma))^{2}\right].
\end{align}
Through the expression of the equation \eqref{eq54}, we find that equations \eqref{eq55} and \eqref{eq56} are infinite-dimensional. Moreover, by the terminal condition $g^{\gamma}(T)=1$, it follows from equations \eqref{eq54} and \eqref{eq55} that
\begin{align*}
  g^{\gamma}(t)=\exp\left\{-\int^{T}_{t}\frac{1-\gamma}{\gamma}\left[-A-\beta-\frac{(\mu- r)^{2}}
{\sigma^{2}\varpi(s,\Gamma)}-\frac{a^{2} \eta^{2}_{2}}{b^{2}\varpi(s,\Gamma)}+\frac{0.5\gamma a^{2} \eta^{2}_{2}}
{b^{2}(\varpi(s,\Gamma))^{2}}+\frac{0.5\gamma(\mu- r)^{2}}
{\sigma^{2}(\varpi(s,\Gamma))^{2}}\right]\mathrm{d}s \right\}
\end{align*}
and
\begin{equation}\label{eq57}
\varpi(t,\Gamma)=\frac{\int \exp\left\{-\int^{T}_{t}\left[-A-\beta-\frac{(\mu- r)^{2}}
{\sigma^{2}\varpi(s,\Gamma)}-\frac{a^{2} \eta^{2}_{2}}{b^{2}\varpi(s,\Gamma)}+\frac{0.5\gamma a^{2} \eta^{2}_{2}}
{b^{2}(\varpi(s,\Gamma))^{2}}+\frac{0.5\gamma(\mu- r)^{2}}
{\sigma^{2}(\varpi(s,\Gamma))^{2}}\right]\mathrm{d}s \right\}\gamma\mathrm{d}\Gamma(\gamma)}{\int \exp\left\{-\int^{T}_{t}\left[-A-\beta-\frac{(\mu- r)^{2}}
{\sigma^{2}\varpi(s,\Gamma)}-\frac{a^{2} \eta^{2}_{2}}{b^{2}\varpi(s,\Gamma)}+\frac{0.5\gamma a^{2} \eta^{2}_{2}}
{b^{2}(\varpi(s,\Gamma))^{2}}+\frac{0.5\gamma(\mu- r)^{2}}
{\sigma^{2}(\varpi(s,\Gamma))^{2}}\right]\mathrm{d}s \right\}\mathrm{d}\Gamma(\gamma)}.
\end{equation}
Similarly, the equation \eqref{eq56} can be rewritten as
\begin{align}\label{eq58}
  \frac{\partial \varpi(t,\Gamma)}{\partial t}
=&\frac{ 0.5}
{(\varpi(t,\Gamma))^{2}}\left(\frac{ a^{2} \eta^{2}_{2}}
{b^{2}}+\frac{(\mu- r)^{2}}
{\sigma^{2}}\right)\times\bigg[-(\varpi(t,\Gamma))^{2}\nonumber\\
&+\left.\frac{\int \gamma^{2}\exp\left\{-\int^{T}_{t}\left[-A-\beta-\frac{(\mu- r)^{2}}
{\sigma^{2}\varpi(s,\Gamma)}-\frac{a^{2} \eta^{2}_{2}}{b^{2}\varpi(s,\Gamma)}+\frac{0.5\gamma a^{2} \eta^{2}_{2}}
{b^{2}(\varpi(s,\Gamma))^{2}}+\frac{0.5\gamma(\mu- r)^{2}}
{\sigma^{2}(\varpi(s,\Gamma))^{2}}\right]\mathrm{d}s \right\}\mathrm{d}\Gamma(\gamma)}{\int \exp\left\{-\int^{T}_{t}\left[-A-\beta-\frac{(\mu- r)^{2}}
{\sigma^{2}\varpi(s,\Gamma)}-\frac{a^{2} \eta^{2}_{2}}{b^{2}\varpi(s,\Gamma)}+\frac{0.5\gamma a^{2} \eta^{2}_{2}}
{b^{2}(\varpi(s,\Gamma))^{2}}+\frac{0.5\gamma(\mu- r)^{2}}
{\sigma^{2}(\varpi(s,\Gamma))^{2}}\right]\mathrm{d}s \right\}\mathrm{d}\Gamma(\gamma)}\right].
\end{align}

It is worth mentioning that equations \eqref{eq57} and \eqref{eq58} demonstrate that because of the linear architecture of the ODE for $g^{\gamma}(t)$, which is presented in the equation \eqref{eq55}, we are able to transform the infinite-dimensional system of ODEs into a description using a single equation. Nevertheless, the equation \eqref{eq58} implicitly defines $\varpi(t,\Gamma)$ through a nonlinear operator equation. While (semi-)closed-form solutions for $\varpi(t,\Gamma)$ may exist under special distributional assumptions on $\Gamma$, the general case resists analytical treatment. Therefore, we attempt to discretize $\Gamma$. For discrete distributions, the outer integral over $\Gamma$ reduces to a finite sum, transforming the system of ODEs into a more tractable form.

\subsubsection{Special case with $n$ possible risk aversions}
Suppose the random risk aversion is considered to conform to an $n$-point distribution having possible outcomes $\gamma_{i}>0\;(i=1,\cdot\cdot\cdot,n)$, which occur with probabilities $p_{i}=p(\gamma_{i})$ and $\sum\nolimits_{i=1}^{n}p_{i}=1$. Then, one has
\begin{equation}\label{eq59}
\varpi(t,\Gamma)=\frac{\sum\limits_{i=1}^{n}(g^{\gamma_{i}}(t))^{\frac{\gamma_{i}}{1-\gamma_{i}}}\gamma_{i}p_{i}}{\sum\limits_{i=1}^{n} (g^{\gamma_{i}}(t))^{\frac{\gamma_{i}}{1-\gamma}_{i}}p_{i}},
\end{equation}
where, for $i=1,\cdot\cdot\cdot,n$,
\begin{align}\label{eq60}
  \frac{\partial g^{\gamma_{i}}(t)}{\partial t}=\frac{1-\gamma_{i}}{\gamma_{i}}\left[-A-\beta-\frac{(\mu- r)^{2}}
{\sigma^{2}\varpi(t,\Gamma)}-\frac{a^{2} \eta^{2}_{2}}{b^{2}\varpi(t,\Gamma)}+\frac{0.5\gamma_{i} a^{2} \eta^{2}_{2}}
{b^{2}(\varpi(t,\Gamma))^{2}}+\frac{0.5\gamma_{i}(\mu- r)^{2}}
{\sigma^{2}(\varpi(t,\Gamma))^{2}}\right] g^{\gamma_{i}}(t).
\end{align}
In addition, the equation \eqref{eq56} reduces to
\begin{align}\label{eq61}
  \frac{\partial \varpi(t,\Gamma)}{\partial t}
=\frac{ 0.5}
{(\varpi(t,\Gamma))^{2}}\left(\frac{ a^{2} \eta^{2}_{2}}
{b^{2}}+\frac{(\mu- r)^{2}}
{\sigma^{2}}\right)\times\left[\frac{\sum\limits_{i=1}^{n} \gamma_{i}^{2}(g^{\gamma_{i}}(t))^{\frac{\gamma_{i}}{1-\gamma_{i}}}p_{i}}{\sum\limits_{i=1}^{n} (g^{\gamma_{i}}(t))^{\frac{\gamma_{i}}{1-\gamma_{i}}}p_{i}}-(\varpi(t,\Gamma))^{2}\right].
\end{align}
Consequently, equations \eqref{eq60} and \eqref{eq61} constitute an $n$-dimensional system of ODEs, as opposed to the infinite-dimensional system when
$\Gamma$ follows the general distribution. As a result, the inquiry regarding the existence of a solution within the general infinite-dimensional ODE system is essentially reduced to the question of the solvability of this $n$-dimensional system.

Having established these foundations, we proceed to characterize the equilibrium strategy for reinsurance and investment with $\varphi^{\gamma}$ being the power utility in the case of $n$ possible risk aversions.

\begin{theorem}\label{theorem4.2}
In the situation where there are $n$ possible risk aversions $\gamma_{i}\in(0,\epsilon_{2}]$ with $0<\epsilon_{2}<1$ for $i=1,\cdot\cdot\cdot,n$, assume that the utility function $\varphi^{\gamma}$ is given by the equation \eqref{eq51}, $\eta=0$ and the condition $(A2)$ holds. Moreover, assume that there exists at least a local solution to $g^{\gamma_{i}}(t)$ in the equation \eqref{eq60} for $i=1,\cdot\cdot\cdot,n$. Then the equilibrium reinsurance and investment strategy is captured by
\begin{equation}\label{eq66}
\hat{q}(t)=\frac{a \eta_{2} (x+\beta m_{1})}{b^{2}\varpi(t,\Gamma)},\quad
\hat{\pi}(t)=\frac{(\mu- r) (x+\beta m_{1})}{\sigma^{2}x\varpi(t,\Gamma)}
\end{equation}
and the corresponding equilibrium value function is
$V(t,x,m_{1})=(x+\beta m_{1})\sum\limits_{i=1}^{n}(g^{\gamma_{i}}(t))^{\frac{\gamma_{i}}{1-\gamma_{i}}}p_{i},$
where $\varpi(t,\Gamma)$ is given by the equation \eqref{eq59}.
\end{theorem}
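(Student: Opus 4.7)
The plan is to apply the verification theorem (Theorem \ref{theorem3.1}) with the power-utility ansatz $Y^{\gamma_{i}}(t,x,m_{1})=\frac{1}{1-\gamma_{i}}(g^{\gamma_{i}}(t))^{\gamma_{i}}(x+\beta m_{1})^{1-\gamma_{i}}$ and the corresponding $U(t,x,m_{1})=(x+\beta m_{1})\sum_{i=1}^{n}p_{i}(g^{\gamma_{i}}(t))^{\gamma_{i}/(1-\gamma_{i})}$ obtained via \eqref{eq22}. This reduces the proof to four items: (a) boundary conditions; (b) the PDE \eqref{eq21}; (c) $\hat u$ realises the supremum in \eqref{eq20}; and (d) admissibility of $\hat u$ and validity of condition $(A1)$ for $U$, $Y^{\gamma_{i}}$ and $H$.

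For (a), $g^{\gamma_{i}}(T)=1$ gives $Y^{\gamma_{i}}(T,x,m_{1})=\varphi^{\gamma_{i}}(x+\beta m_{1})$ and, using $\sum_{i=1}^{n}p_{i}=1$, $U(T,x,m_{1})=x+\beta m_{1}$. For (b), substituting the partial derivatives of the ansatz into \eqref{eq21}, factoring out $(g^{\gamma_{i}}(t))^{\gamma_{i}}(x+\beta m_{1})^{-\gamma_{i}}$, and using $(A2)$ together with $\eta=0$ to eliminate the $m_{1}$, $m_{2}$ and constant-drift contributions, yields precisely the ODE \eqref{eq60}, which is solvable by hypothesis. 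For (c), the formulas \eqref{eq52}–\eqref{eq53} were obtained from the first-order conditions of the quadratic inner problem in \eqref{eq36}, and the ansatz makes the bracket in \eqref{eq36} strictly concave in $(q,\pi)$, so $\hat u$ is indeed the argmax; the explicit form of $U$ follows by substituting the ansatz into \eqref{eq22}.

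The non-trivial step is (d). Let $Z(t):=X^{\hat u}(t)+\beta M_{1}^{\hat u}(t)$. Combining the SDDE \eqref{eq11} with the delay equation \eqref{eq9}, and exploiting $(A2)$ and $\eta=0$ to cancel all $m_{1}$, $m_{2}$ and constant terms, I obtain the linear geometric SDE
\begin{equation*}
\mathrm{d}Z(t)=Z(t)\left[\kappa(t)\mathrm{d}t+\frac{a\eta_{2}}{b\varpi(t,\Gamma)}\mathrm{d}W_{1}(t)+\frac{\mu-r}{\sigma\varpi(t,\Gamma)}\mathrm{d}W_{2}(t)\right],
\end{equation*}
where $\kappa(t)=(A+\beta)+\frac{(\mu-r)^{2}}{\sigma^{2}\varpi(t,\Gamma)}+\frac{a^{2}\eta_{2}^{2}}{b^{2}\varpi(t,\Gamma)}$ and $Z(0)=x_{0}+\beta x_{0}(1-e^{-\alpha h})/\alpha>0$. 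Its exponential representation shows $Z(t)>0$ almost surely, hence $\hat q(t)\ge0$ and the ansatz $Y^{\gamma_{i}}$ with non-integer exponent $1-\gamma_{i}$ is well-defined along the trajectory. Strict positivity of $g^{\gamma_{i}}$ on $[0,T]$, together with the hypothesis of local solvability of \eqref{eq60} extended to the whole interval, bounds $\varpi(t,\Gamma)$ away from $0$ and $\infty$, so the coefficients of the SDE for $Z$ are bounded; this provides $Z\in\mathbb{S}^{2}_{\mathcal{F}}(0,T;\mathbb{R})$ via a standard moment estimate, and reconstructing $X^{\hat u}=Z-\beta M_{1}^{\hat u}$ together with Lemma \ref{lemma3.2} gives $X^{\hat u}\in\mathbb{S}^{2}_{\mathcal{F}}(0,T;\mathbb{R})$ and Definition \ref{definition3.1}(ii).

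The hardest part will be verifying Definition \ref{definition3.1}(iii) and condition $(A1)$ for $U$, $Y^{\gamma_{i}}$ and $H$ simultaneously. Each of these is a product of a bounded deterministic function of $t$ with a polynomial in $Z$ of degree at most $\max\{1,1-\gamma_{i}\}\le 1$; the assumption $\gamma_{i}\in(0,\epsilon_{2}]$ with $\epsilon_{2}<1$ keeps the relevant exponents strictly positive and bounded above by $1$, so moderate-order moment bounds on $\sup_{t\in[0,T]}Z(t)$ suffice. These bounds follow from It\^o's formula applied to $Z^{q}$ for appropriate $q$, the boundedness of the drift and diffusion coefficients in the SDE for $Z$, and Gr\"onwall's inequality. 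Once this is in place, all hypotheses of Theorem \ref{theorem3.1} are met and the equilibrium strategy \eqref{eq66} together with the stated value function follows.
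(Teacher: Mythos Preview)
Your outline matches the paper's strategy closely—items (a), (b), (c) are exactly how the paper sets things up, and the derivation of the geometric SDE for $Z=X^{\hat u}+\beta M_1^{\hat u}$ under $(A2)$ and $\eta=0$ is also what the paper does. The one genuine gap is in your step (d), the reconstruction ``$X^{\hat u}=Z-\beta M_1^{\hat u}$ together with Lemma \ref{lemma3.2} gives $X^{\hat u}\in\mathbb{S}^{2}_{\mathcal{F}}$''. This is circular: Lemma \ref{lemma3.2} takes $X^{\hat u}\in\mathbb{S}^{2}_{\mathcal{F}}$ as \emph{hypothesis} and outputs moment bounds on $M_1^{\hat u},M_2^{\hat u}$, not the other way around. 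Knowing $Z\in\mathbb{S}^{2}_{\mathcal{F}}$ alone does not give you $M_1^{\hat u}\in\mathbb{S}^{2}_{\mathcal{F}}$, because $M_1^{\hat u}$ is defined through $X^{\hat u}$, which is what you are trying to control; you are back to a delay system for $(X^{\hat u},M_1^{\hat u})$. Relatedly, Definition \ref{definition3.1}(i) asks for a unique strong solution of the SDDE \eqref{eq11} itself, and the autonomous SDE for $Z$ does not by itself deliver that.

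The paper's fix is simple and you should adopt it: substitute $\hat u$ from \eqref{eq66} into \eqref{eq11} to obtain an SDDE for $X^{\hat u}$ whose drift and diffusion are \emph{linear} in $(X^{\hat u},M_1^{\hat u},M_2^{\hat u})$ with bounded deterministic coefficients (since $\varpi(t,\Gamma)$ is continuous and bounded away from $0$). A standard SDDE existence/uniqueness and moment result (the paper cites Proposition 2.1 in \cite{Meng2025}) then gives directly $X^{\hat u}\in\mathbb{S}^{p}_{\mathcal{F}}(0,T;\mathbb{R})$ for all $p\ge 2$, settling Definition \ref{definition3.1}(i); Lemma \ref{lemma3.2} (and its obvious $\mathbb{S}^{2}$ analogue) then yields $M_1^{\hat u},M_2^{\hat u}\in\mathbb{S}^{2}_{\mathcal{F}}$. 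The geometric SDE for $Z$ is used by the paper only for what it does uniquely well: showing $Z(t)>0$ a.s., hence $\hat q(t)>0$ and well-definedness of $(x+\beta m_1)^{1-\gamma_i}$ along the trajectory. After this reordering, your sketch for Definition \ref{definition3.1}(ii)--(iii) and condition $(A1)$ (powers of $Z$ of order at most $1$ controlled by $\sup_t|X^{\hat u}|^2$) goes through exactly as the paper carries it out.
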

\begin{proof}
See Appendix G.
\end{proof}

\begin{remark}
In order to ensure the existence of equilibrium reinsurance and investment strategy, we assume that the equation \eqref{eq60} for $g^{\gamma_{i}}(t)$ has a local solution in Theorem \ref{theorem4.2}, which seems difficult to be verified theoretically. Thus, similar to the work \cite{Desmettre2023}, we will conduct an analysis by numerical experiments in Section 5. On the other hand, as pointed out in \cite{Desmettre2023}, if the solution to the equation \eqref{eq60} explodes, then one can set a minor constraint that the reinsurance and investment horizon is shorter than the explosion time of the local solution.
\end{remark}

\begin{corollary}
If $\alpha=h=B=C=\beta=0$, which indicates that there is no inclusion of any delay within the model, then the equilibrium reinsurance and investment strategy becomes
$
\left(\hat{q}(t),\hat{\pi}(t)\right)=\left(\frac{a \eta_{2} x}
{b^{2}\varpi(t,\Gamma)},\frac{(\mu- r) }
{\sigma^{2}\varpi(t,\Gamma)}\right)
$
and the corresponding equilibrium value function is characterized by
$V(t,x)=x\sum\limits_{i=1}^{n}(g^{\gamma_{i}}(t))^{\frac{\gamma_{i}}{1-\gamma_{i}}}p_{i},$
where $\varpi(t,\Gamma)$ is  expressed by the equation \eqref{eq59} with $g^{\gamma_{i}}(t)$ satisfying
\begin{align*}
  \frac{\partial g^{\gamma_{i}}(t)}{\partial t}=\frac{1-\gamma_{i}}{\gamma_{i}}\left[-r-\frac{(\mu- r)^{2}}
{\sigma^{2}\varpi(t,\Gamma)}-\frac{a^{2} \eta^{2}_{2}}{b^{2}\varpi(t,\Gamma)}+\frac{0.5\gamma_{i} a^{2} \eta^{2}_{2}}
{b^{2}(\varpi(t,\Gamma))^{2}}+\frac{0.5\gamma_{i}(\mu- r)^{2}}
{\sigma^{2}(\varpi(t,\Gamma))^{2}}\right] g^{\gamma_{i}}(t).
\end{align*}
\end{corollary}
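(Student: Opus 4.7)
The plan is to obtain this corollary as a direct specialization of Theorem \ref{theorem4.2}: check that every hypothesis survives when $\alpha=h=B=C=\beta=0$, and then substitute these values into the formulas already derived there.

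First I would verify condition $(A2)$ by direct inspection. With $\beta=C=0$ the identity $C=\beta e^{-\alpha h}$ collapses to $0=0$, and $Be^{-\alpha h}=(\alpha+A+\beta)C$ collapses to $0=0$ as well, so both parts hold trivially. The additional assumption $\eta=\eta_{1}-\eta_{2}=0$ demanded by Theorem \ref{theorem4.2} is a structural requirement on the (re)insurance market, independent of the delay parameters, and is therefore inherited unchanged. Since $B=C=0$, the constant $A=r-B-C$ becomes $r$, so $A+\beta=r$; since $\beta=0$, the drivers $Bm_{1}+Cm_{2}$ disappear from the SDDE \eqref{eq11}, which thus degenerates to a standard (delay-free) reinsurance-investment SDE admitting a unique strong solution by classical SDE theory, and the terminal payoff in \eqref{eq14} depends only on $X^{u}(T)$.

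Next I would substitute $\beta=0$ and $A+\beta=r$ into the conclusions of Theorem \ref{theorem4.2}. The replacement $(x+\beta m_{1})\mapsto x$ in \eqref{eq66} produces
\[
\hat{q}(t)=\frac{a\eta_{2}x}{b^{2}\varpi(t,\Gamma)},\qquad
\hat{\pi}(t)=\frac{(\mu-r)x}{\sigma^{2}x\,\varpi(t,\Gamma)}=\frac{\mu-r}{\sigma^{2}\varpi(t,\Gamma)}.
\]
The same replacement in $V(t,x,m_{1})=(x+\beta m_{1})\sum_{i=1}^{n}(g^{\gamma_{i}}(t))^{\gamma_{i}/(1-\gamma_{i})}p_{i}$ yields the claimed $V(t,x)$. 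The ODE \eqref{eq60} with $A+\beta$ replaced by $r$ immediately gives the ODE displayed in the corollary, with the terminal condition $g^{\gamma_{i}}(T)=1$ preserved, and $\varpi(t,\Gamma)$ retains the form \eqref{eq59}.

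The only point I would pause on is making sure the verification machinery still applies after the degeneracy: the ansatz $Y^{\gamma}(t,x,m_{1})=\tfrac{1}{1-\gamma}(g^{\gamma}(t))^{\gamma}(x+\beta m_{1})^{1-\gamma}$ becomes $Y^{\gamma}(t,x)=\tfrac{1}{1-\gamma}(g^{\gamma}(t))^{\gamma}x^{1-\gamma}$, and I would confirm it still solves \eqref{eq21}. This is automatic, because every $m_{1}$- and $m_{2}$-derivative appearing in $\mathcal{A}^{u}Y^{\gamma}$ is multiplied by a factor drawn from $\{\beta,B,C\}$, all of which now vanish, so the only surviving terms are exactly those that were used to produce the ODE for $g^{\gamma_{i}}$. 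Hence the hypotheses of Theorem \ref{theorem4.2} are met, the argument there applies verbatim, and the corollary follows without any new estimate.
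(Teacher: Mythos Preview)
Your proposal is correct and matches the paper's approach: the corollary is stated without proof there, being an immediate specialization of Theorem \ref{theorem4.2} obtained by setting $\alpha=h=B=C=\beta=0$, noting that $(A2)$ becomes vacuous and $A+\beta=r$, and substituting into \eqref{eq66}, the value function, and \eqref{eq60}. Your extra check that the verification machinery survives the degeneracy is more than the paper does but entirely sound.
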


\begin{remark}
 It follows from equations \eqref{eq59} and \eqref{eq60} that the solution to $g^{\gamma_{i}}(t)$ is influenced by the model parameters of both the risky asset and the insurance market. As a result, $\varpi(t,\Gamma)$ and $(\hat{q}(t),\hat{\pi}(t))$ are also affected by these parameters. This implies that under the Black-Scholes financial market, when the financial market and the insurance market are independent, the equilibrium investment strategy is influenced not only by the parameters of the risky asset but also by those of the insurance market. The same phenomenon exists for the equilibrium reinsurance strategy. However, these phenomena do not exist in the case of the exponential utility function with random risk aversion (see Theorem \ref{theorem4.1}). Furthermore, we would like to mention that in previous literature considering insurance and investment problems under constant risk aversion coefficients, when the financial market and the insurance market are independent, the investment strategy is not affected by the parameters of the insurance market and the reinsurance strategy is not influenced by the parameters of the risky asset (see, for instance, the equation \eqref{eq62} hereafter). Thus, our newly derived results suggest that taking into account the random risk aversion in reinsurance and investment problems is both meaningful and essential. On the other hand, we find that Li and Li \cite{LiY2013Optimal} studied a class of mean-variance reinsurance and investment problems under state dependent risk aversion, in which when the risk aversion function exhibits inverse proportionality to wealth, they obtained the semi-analytical equilibrium reinsurance and investment strategy. The expression of their strategy shows that when the financial market and the insurance market are independent, both reinsurance and investment strategies are influenced by parameters of risky assets and insurance market parameters. Although there are similarities between our conclusions and those of \cite{LiY2013Optimal}, we assumed that the risk aversion is a random variable satisfying a certain distribution and derived them under the power utility function by the approach of using expected certainty equivalent.
\end{remark}

\subsubsection{Special case with one possible risk aversion}
If the distribution of the risk aversion simplifies to a single outcome $\gamma$, then one can derive that $\frac{\partial g^{\gamma}(t)}{\partial t}=\frac{1-\gamma}{\gamma}\left[-A-\beta-\frac{0.5(\mu- r)^{2}}
{\sigma^{2}\gamma}-\frac{0.5a^{2} \eta^{2}_{2}}{b^{2}\gamma}\right] g^{\gamma}(t),$
which yields
$
  g^{\gamma}(t)=\exp\left\{\frac{1-\gamma}{\gamma}\left(A+\beta+\frac{0.5(\mu- r)^{2}}
{\sigma^{2}\gamma}+\frac{0.5a^{2} \eta^{2}_{2}}{b^{2}\gamma}\right)(T-t)\right\}.
$
Moreover, the equation \eqref{eq66} can be rewritten as
\begin{eqnarray}\label{eq62}
\hat{q}(t)=\frac{a \eta_{2} (x+\beta m_{1})}{b^{2}\gamma},\quad
   \hat{\pi}(t)=\frac{(\mu- r) (x+\beta m_{1})}{\sigma^{2}x\gamma}
\end{eqnarray}
and the candidate equilibrium value function is captured by
\begin{equation}\label{eq63}
U(t,x,m_{1})=\exp\left\{\left(A+\beta+\frac{0.5(\mu- r)^{2}}
{\sigma^{2}\gamma}+\frac{0.5a^{2} \eta^{2}_{2}}{b^{2}\gamma}\right)(T-t)\right\}(x+\beta m_{1}).
\end{equation}

By the fact that $g^{\gamma}(t)$ possesses a unique global solution, one can entirely adhere to the approach used for $n$ possible risk aversions to validate the admissibility requirements and the assumptions in Theorem \ref{theorem3.1}, yet in a more straightforward manner. Consequently, the following results can be obtained.
\begin{theorem}\label{theorem4.3}
In the situation where there is only one possible risk aversion $\gamma\in(0,\epsilon_{2}]$, assume that the utility function $\varphi^{\gamma}$ is given by the equation \eqref{eq51}, $\eta=0$ and the condition $(A2)$ holds. Then the equilibrium reinsurance and investment strategy and the corresponding equilibrium value function are given by equations \eqref{eq62} and \eqref{eq63}, respectively.
\end{theorem}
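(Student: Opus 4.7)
The plan is to apply the verification theorem (Theorem~\ref{theorem3.1}), mirroring the proof of Theorem~\ref{theorem4.2} in Appendix~F but with the considerable simplification afforded by the fact that here $g^{\gamma}(t)$ has the explicit global solution given in the derivation of \eqref{eq63}, so no local-existence hypothesis on $g^{\gamma}$ is needed. I would take $Y^{\gamma}(t,x,m_{1})=\frac{1}{1-\gamma}(g^{\gamma}(t))^{\gamma}(x+\beta m_{1})^{1-\gamma}$, $U(t,x,m_{1})$ as in \eqref{eq63}, and $H=U$ (since $\Gamma$ is a point mass at $\gamma$). Direct substitution, which is essentially already performed in the paragraphs leading up to \eqref{eq62}, then shows that the pseudo HJB equations \eqref{eq20} and \eqref{eq21} hold together with the terminal conditions $Y^{\gamma}(T,\cdot)=\varphi^{\gamma}(x+\beta m_{1})$ and $U(T,\cdot)=x+\beta m_{1}$; the role of $\eta=0$ is to kill the constant drift term in the ODE for $g^{\gamma}$, while condition $(A2)$ is exactly what cancels the $m_{2}$ dependence so that the ansatz closes on $(t,x,m_{1})$.

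Next, I would verify the admissibility requirements of Definition~\ref{definition3.1} for $\hat u=(\hat q,\hat\pi)$ given by \eqref{eq62}. Substituting this candidate into \eqref{eq11} produces an SDDE that is linear in $(X^{\hat u},M_{1}^{\hat u},M_{2}^{\hat u})$ with constant deterministic coefficients (since $\gamma$ is fixed). Standard existence-uniqueness theory for such linear SDDEs, together with the arguments used in \cite{Chang2011, Bai2022}, then yields a unique strong solution with $X^{\hat u}\in\mathbb{S}^{2}_{\mathcal{F}}(0,T;\mathbb{R})$, whence Lemma~\ref{lemma3.2} supplies the $\mathbb{S}^{1}_{\mathcal{F}}$ and $\mathbb{L}^{1}_{\mathcal{F}}$ regularity of $M_{1}^{\hat u}$ and $M_{2}^{\hat u}$. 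Progressive measurability, continuity, and the square-integrability of $q^{2}+\pi^{2}X^{2}$ in (ii) are immediate from the explicit form of $\hat u$. The positivity $\hat q\ge 0$ and the finiteness of the integral in condition~(iii) both reduce to showing $Z(t):=X^{\hat u}(t)+\beta M_{1}^{\hat u}(t)>0$ on $[0,T]$; this counterpart of the positivity step in the proof of Theorem~\ref{theorem4.2} is obtained by verifying that $Z$ satisfies a linear homogeneous SDDE whose drift is proportional to $Z$ itself (condition~$(A2)$ and $\eta=0$ being precisely what produces this algebraic cancellation), and then invoking a Dol\'eans-Dade-type representation together with $Z(0)=x_{0}(1+\beta(1-e^{-\alpha h})/\alpha)>0$.

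Finally, condition $(A1)$ for the triple $(U,Y^{\gamma},H)$ follows from the algebraic structure of the candidates: $U_{x}$ is a deterministic function of $t$ only, $Y^{\gamma}_{x}$ is a power of $Z$, and both $\hat q$ and $\hat\pi X^{\hat u}$ are linear in $Z$, so each of $bq\phi_{x}$, $\pi x\sigma\phi_{x}$, and $\mathcal{A}^{\hat u}\phi$ for $\phi\in\{U,Y^{\gamma},H\}$ is a polynomial in $Z$ whose $\mathbb{L}^{1}_{\mathcal{F}}$ or $\mathbb{L}^{2}_{\mathcal{F}}$ norm on $[0,T]$ is controlled by $\mathbb{E}[\sup_{0\le t\le T}Z(t)^{2}]<\infty$, which follows from the $\mathbb{S}^{2}_{\mathcal{F}}$ bound on $X^{\hat u}$ and Lemma~\ref{lemma3.2}. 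With all hypotheses of Theorem~\ref{theorem3.1} verified, the identification $V=U$ and the equilibrium property of $\hat u$ are immediate. I expect the positivity argument for $Z$ to be the main obstacle: because $Z$ is governed by an SDDE rather than an SDE, the classical multiplicative representation is not directly available, and the estimate must be carried through either by a Picard iteration on consecutive intervals of length at most $h$ or by a careful adaptation of the corresponding step in the proof of Theorem~\ref{theorem4.2}.
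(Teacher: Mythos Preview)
Your proposal is correct and mirrors the paper's own argument, which simply refers back to the proof of Theorem~\ref{theorem4.2} (Appendix~F) and notes that the explicit global form of $g^{\gamma}$ removes the local-existence hypothesis. Your anticipated ``main obstacle'' does not in fact arise: under $(A2)$ the $M_{2}^{\hat u}$-term vanishes and $(A+\beta)X^{\hat u}+(B-\alpha\beta)M_{1}^{\hat u}=(A+\beta)Z$, so $Z=X^{\hat u}+\beta M_{1}^{\hat u}$ satisfies a genuine linear SDE (not an SDDE) with drift and diffusion both proportional to $Z$, and the Dol\'eans--Dade exponential applies directly---exactly as in equation~\eqref{eq68} and the display following it in Appendix~F.
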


\begin{corollary}
If $\alpha=h=B=C=\beta=0$, which suggests that the model does not incorporate any delay, then the equilibrium reinsurance and investment strategy becomes
$
\left(\hat{q}(t),\hat{\pi}(t)\right)=\left(\frac{a \eta_{2}x}{b^{2}\gamma},\frac{(\mu- r)}{\sigma^{2}\gamma}\right)
$
and the corresponding equilibrium value function is $V(t,x)=x\exp\left\{\left(r+\frac{0.5(\mu- r)^{2}}
{\sigma^{2}\gamma}+\frac{0.5a^{2} \eta^{2}_{2}}{b^{2}\gamma}\right)(T-t)\right\}$.
\end{corollary}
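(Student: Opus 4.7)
The plan is to deduce this corollary as a direct specialization of Theorem \ref{theorem4.3}, by substituting the no-delay parameter values $\alpha=h=B=C=\beta=0$ into the formulas \eqref{eq62} and \eqref{eq63}. Before substituting, I would first check that the hypotheses of Theorem \ref{theorem4.3} remain valid. The hypothesis $\eta=\eta_1-\eta_2=0$ and the admissibility constraint $\gamma\in(0,\epsilon_2]$ with $\epsilon_2<1$ are independent of the delay parameters and are assumed to persist. The condition $(A2)$, namely $C=\beta e^{-\alpha h}$ and $Be^{-\alpha h}=(\alpha+A+\beta)C$, becomes $0=0$ and $0=0$ once $B=C=\beta=0$, so it is trivially satisfied regardless of the values of $\alpha$ and $h$.

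Next, I would observe the identity $A=r-B-C$, which reduces to $A=r$ upon setting $B=C=0$, and therefore $A+\beta=r$. Because $\beta=0$, the state variable $m_1$ disappears from both the equilibrium strategy and the equilibrium value function: every occurrence of the combination $x+\beta m_1$ collapses to $x$. At this point the claimed reinsurance strategy $\hat{q}(t)=a\eta_2 x/(b^2\gamma)$ and investment strategy $\hat{\pi}(t)=(\mu-r)/(\sigma^2\gamma)$ follow immediately from \eqref{eq62}, while substituting $A+\beta=r$ and $\beta m_1=0$ into \eqref{eq63} produces the desired closed-form expression for $V(t,x)$.

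Since the admissibility arguments and the verification of the conditions of Theorem \ref{theorem3.1} in the proof of Theorem \ref{theorem4.3} rely on the explicit (and in the no-delay case globally defined) solution of the ODE for $g^\gamma(t)$, no additional integrability or existence analysis is required here; the degenerate regime only simplifies those checks. There is no significant obstacle in this corollary: it is a routine specialization, and I would expect the write-up to consist mainly of confirming $(A2)$ and performing the parameter substitution.
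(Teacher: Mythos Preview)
Your proposal is correct and matches the paper's approach: the corollary is stated without proof in the paper, being an immediate specialization of Theorem \ref{theorem4.3} obtained by substituting $\alpha=h=B=C=\beta=0$ (so that $A+\beta=r$ and $x+\beta m_1=x$) into \eqref{eq62} and \eqref{eq63}, with condition $(A2)$ holding trivially.
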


\section{Numerical illustrations}
In this section, we will present numerical experiments to analyze the impacts of delay parameters and risk aversion parameters on the equilibrium reinsurance and investment strategies under the exponential utility function and power utility function, respectively. Meanwhile, under the Black-Scholes model, we will analyze the influences of parameters in the insurance and financial markets on the equilibrium reinsurance and investment strategy under the power utility function. In the sequel, the sensitivity of the equilibrium reinsurance and investment strategy will be investigated by varying one parameter at a time in each figure and without loss of generality, all investigations will be performed at the initial time $t=0$ for convenience.

In the numerical experiments, unless stated otherwise, the essential parameters of the insurance and financial markets and the delay parameters are sourced from \cite{Bi2019} and \cite{Yuan2023} separately, as presented in Table \ref{tab1}. Furthermore, we follow \cite{Bi2019} to set $x_{0}=0.6$ and $T=2(years)$ and then we have $m_{10}=\frac{x_{0}(1-e^{-\alpha h})}{\alpha}=1.2(1-e^{-1})$. In addition, analogous to \cite{Desmettre2023}, we characterize the random risk aversion by a two point-distribution, with parameters detailed for two specific cases: (I) $\gamma_{1}=0.5$, $\gamma_{2}=0.9$ and $p_{1}=p_{2}=0.5$; (II) $\gamma_{1}=0.5$, $\gamma_{2}=0.9$, $p_{1}=0.8$ and $p_{2}=0.2$.

\begin{table}[H]
 \caption{Values of insurance and financial market parameters and delay parameters.}
 \label{tab1}
 \small
  \centering
   \begin{tabular}{>{\hfil}p{1cm}<{\hfil} >{\hfil}p{1.1cm}<{\hfil} >{\hfil}p{1cm}<{\hfil} >{\hfil}p{1cm}<{\hfil} >{\hfil}p{1.2cm}<{\hfil}
   >{\hfil}p{1cm}<{\hfil} >{\hfil}p{1cm}<{\hfil} >{\hfil}p{1.1cm}<{\hfil} >{\hfil}p{1.1cm}<{\hfil} >{\hfil}p{1cm}<{\hfil}>{\hfil}p{0.6cm}<{\hfil}  }
     \hline
{$\eta_{1}$}& {$\eta_{2}$}& 	{$\lambda_{1}$} & {$\mu_{1}$} & {$\mu_{2}$} & {$r$} & {$\mu$}&{$\sigma$} &{$\alpha$}&{$\beta$} & {$h$}   \\\hline
0.3 & 0.5 &   1& 0.1 &0.2 & 0.1 & $0.2$ & 0.6 &$0.5$& 0.05 & 2 \\
     \hline
   \end{tabular}
\end{table}

\subsection{The case of the exponential utility}
\begin{figure}
    \centering
    \begin{minipage}{0.32\textwidth}
     \centering
      \includegraphics[width=\linewidth]{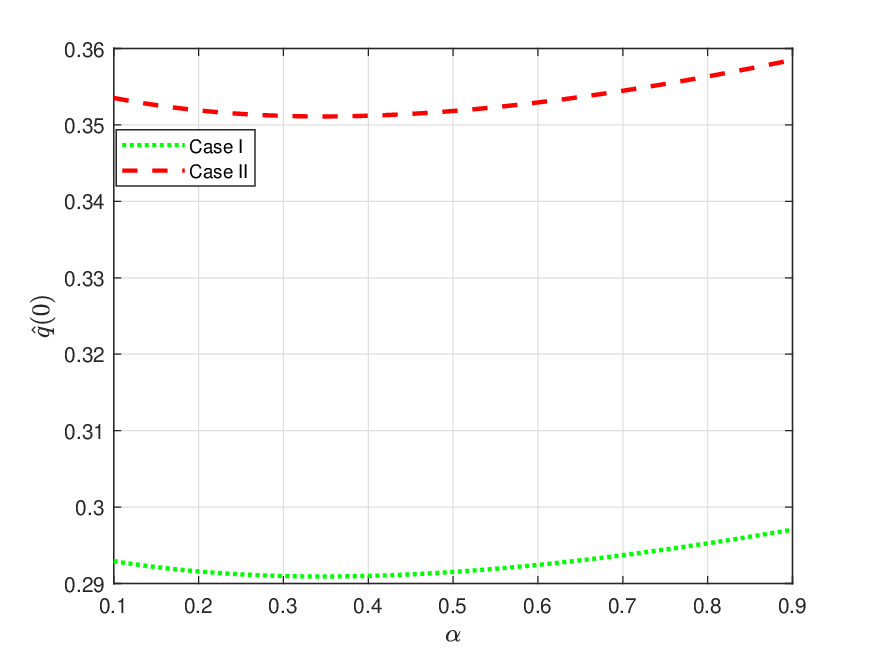}
      \caption*{(a) The impact of $\alpha$ on $\hat{q}(0)$ }
    \end{minipage}
     \begin{minipage}{0.32\textwidth}
     \centering
      \includegraphics[width=\linewidth]{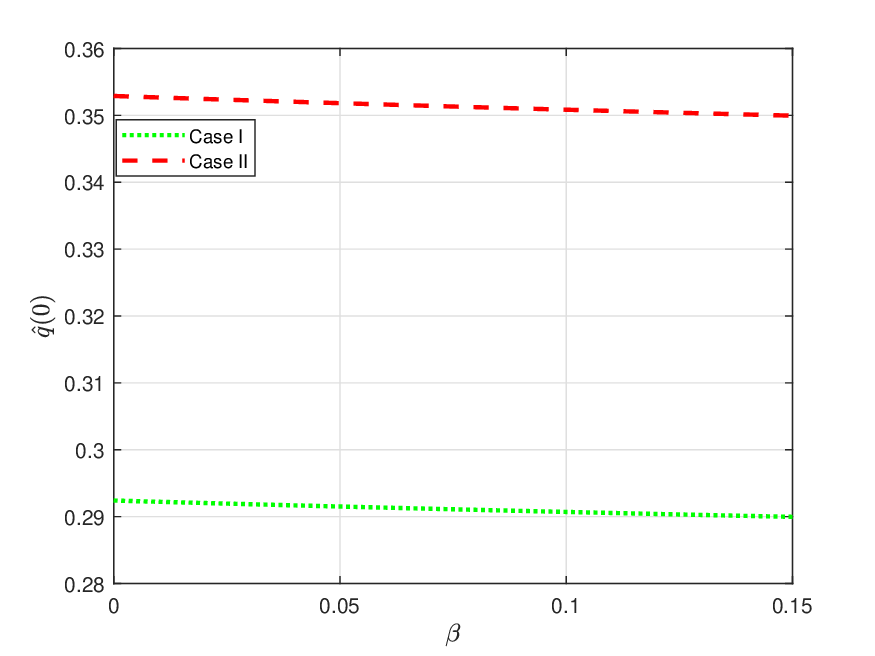}
      \caption*{(b) The impact of $\beta$ on $\hat{q}(0)$ }
    \end{minipage}
    \begin{minipage}{0.32\textwidth}
    \centering
      \includegraphics[width=\linewidth]{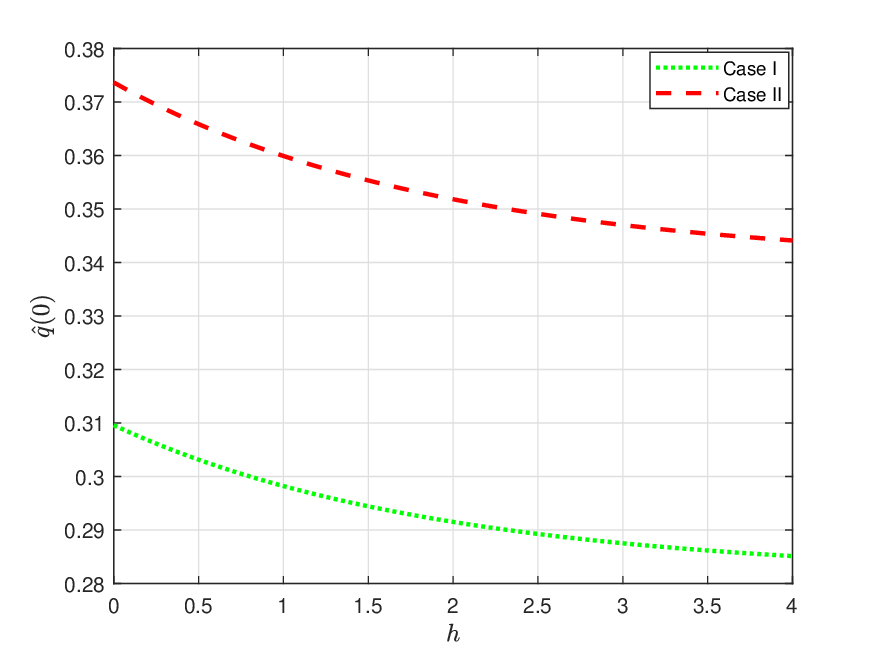}
      \caption*{(c) The impact of $h$ on $\hat{q}(0)$}
    \end{minipage}
    \caption{The impacts of $\alpha$, $\beta$ and $h$ on the equilibrium reinsurance strategy.}
\label{fig1}
\end{figure}

\begin{figure}
    \centering
    \begin{minipage}{0.32\textwidth}
     \centering
      \includegraphics[width=\linewidth]{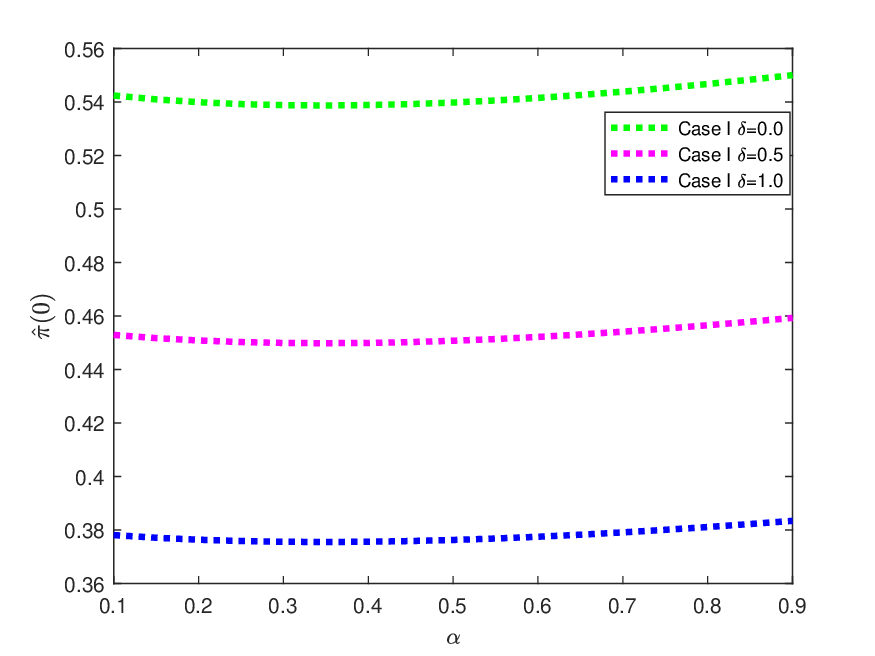}
      \caption*{(a) The impact of $\alpha$ on $\hat{\pi}(0)$ }
    \end{minipage}
     \begin{minipage}{0.32\textwidth}
     \centering
      \includegraphics[width=\linewidth]{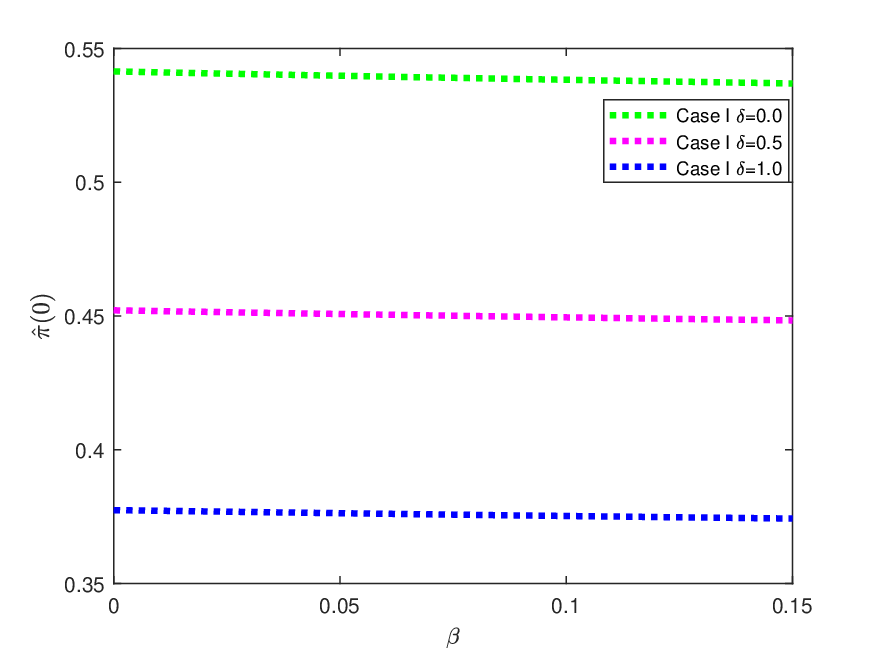}
      \caption*{(b) The impact of $\beta$ on $\hat{\pi}(0)$ }
    \end{minipage}
    \begin{minipage}{0.32\textwidth}
    \centering
      \includegraphics[width=\linewidth]{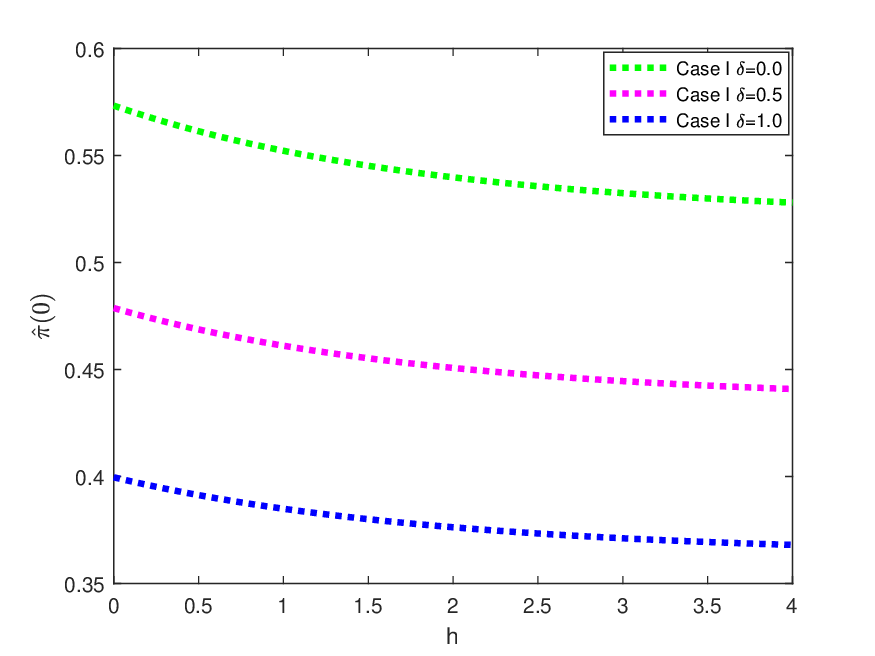}
      \caption*{(c) The impact of $h$ on $\hat{\pi}(0)$}
    \end{minipage}
        \begin{minipage}{0.32\textwidth}
     \centering
      \includegraphics[width=\linewidth]{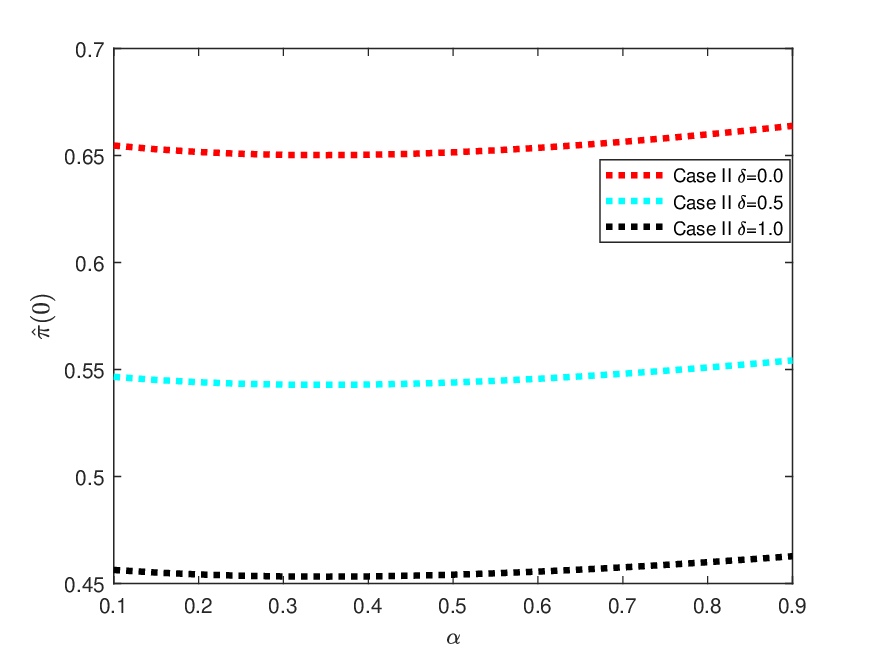}
      \caption*{(d) The impact of $\alpha$ on $\hat{\pi}(0)$ }
    \end{minipage}
     \begin{minipage}{0.32\textwidth}
     \centering
      \includegraphics[width=\linewidth]{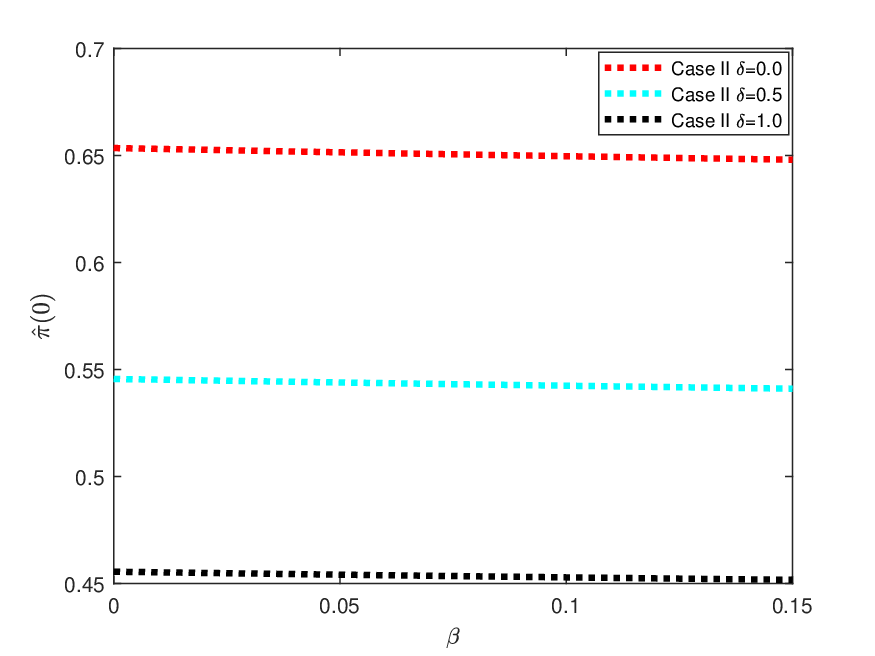}
      \caption*{(e) The impact of $\beta$ on $\hat{\pi}(0)$ }
    \end{minipage}
    \begin{minipage}{0.32\textwidth}
    \centering
      \includegraphics[width=\linewidth]{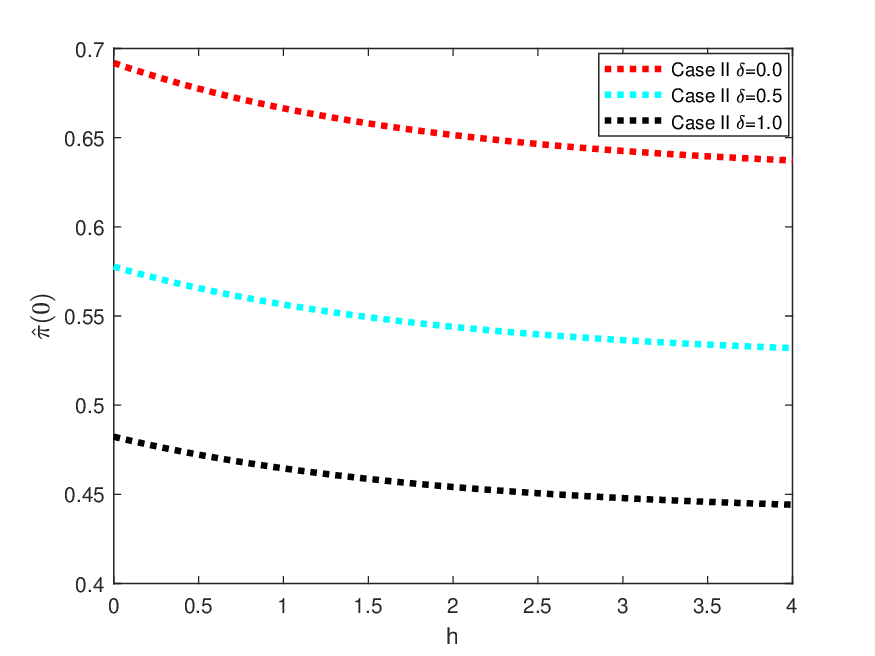}
      \caption*{(f) The impact of $h$ on $\hat{\pi}(0)$}
    \end{minipage}
    \caption{The impacts of $\alpha$, $\beta$ and $h$ on the equilibrium investment strategy.}
\label{fig7}
\end{figure}

In this subsection, we adopt $\bar{s}_{1}=1.2$ and analyze the impacts of $\alpha$, $\beta$ and $h$ on the equilibrium strategy for reinsurance and investment under the exponential utility by Figures \ref{fig1} and \ref{fig7}. As seen in subgraphs \ref{fig1} $(a)$ and subgraphs \ref{fig7} $(a)$ and $(d)$,  both $\hat{q}(0)$ and $\hat{\pi}(0)$ first decrease monotonically as $\alpha$ increases and then increase monotonically. This shows that prior to the extreme point, a higher average parameter corresponds to a smaller reinsurance strategy and a smaller investment proportion in the risky asset, which means the insurer adopts a more conservative reinsurance and investment strategy. Subsequent to the extreme point, a higher average parameter is associated with a greater reinsurance strategy and a greater investment proportion in the risky asset, which implies the insurer uses a more aggressive reinsurance and investment strategy. On the whole, the influence exerted by the average parameter $\alpha$ on the equilibrium reinsurance and investment strategy undergoes a transformation contingent upon the magnitude of the parameter. Moreover, subgraphs \ref{fig1} $(b)$ and subgraphs \ref{fig7} $(b)$ and $(e)$ disclose that both $\hat{q}(0)$ and $\hat{\pi}(0)$ exhibit a decreasing trend concomitant with the increase of $\beta$. With the increment of $\beta$, the cumulative delayed information related to the insurer's wealth assumes a greater weight whereas the current wealth carries a smaller proportion. This suggests that the more emphasis the insurer places on the cumulative delayed information of the insurer's wealth under the exponential utility, the smaller the reinsurance and the proportion to the risky asset will be. Furthermore, subgraphs \ref{fig1} $(c)$ and subgraphs \ref{fig7} $(c)$ and $(f)$ present that the quantities $\hat{q}(0)$ and $\hat{\pi}(0)$ undergo a reduction in tandem with the growth of $h$. As $h$ increases, the averaging horizon extends, which suggests the insurer adopting the exponential utility accepts to reduce the proportion allocated to the risky asset and lower the reinsurance ratio. All aforementioned behaviors correspond to the results in equations \eqref{eq70} and \eqref{eq71}. What is more, Figure \ref{fig7} shows that $\hat{\pi}(0)$ decreases with the increase of $\delta$. Additionally, it is readily apparent that a higher expected risk aversion degree for the insurer corresponds to a more cautious reinsurance and investment strategy. Specifically, the insurer with greater expected risk aversion is inclined to acquire additional reinsurance and allocate a smaller proportion to the risky asset. It is also intriguing to note that this phenomenon persists in the subsequent Figures \ref{fig2}-\ref{fig6} under varying parameter configurations.

\subsection{The case of the power utility}
Next, we take $\delta=0$ and  $\eta_{1}=\eta_{2}=0.3$ and carry out numerical test to exhibit how delay parameters and insurance-financial market parameters influence $(\hat{q}(0), \hat{\pi}(0))$ under the power utility through Figures \ref{fig2}-\ref{fig6}.

\begin{figure}
    \centering
    \begin{minipage}{6.5cm}
      \includegraphics[width=6cm]{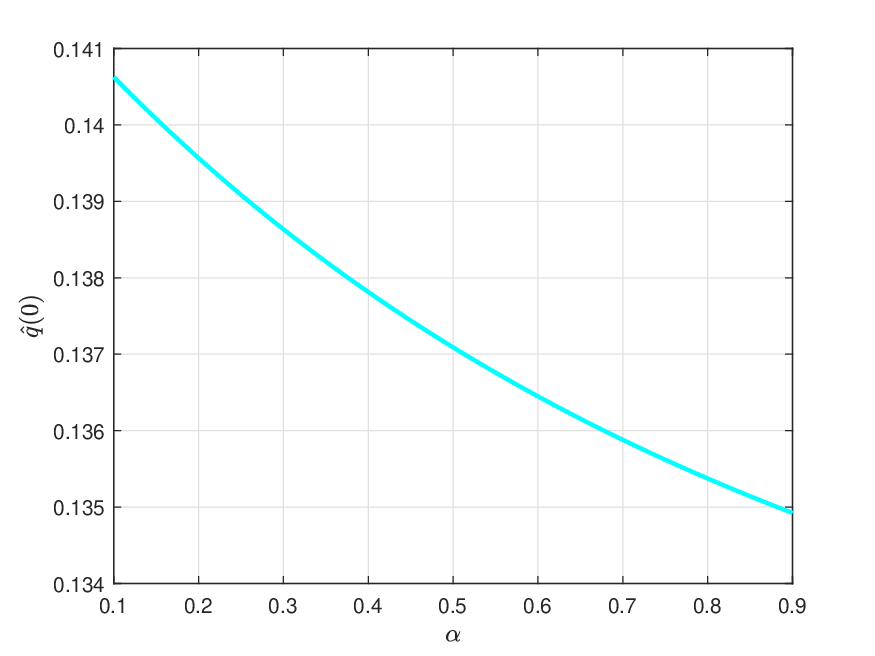}
      \caption*{(a) The impact of $\alpha$ on $\hat{q}(0)$ in case (I)}
    \end{minipage}
     \begin{minipage}{6.5cm}
      \includegraphics[width=6cm]{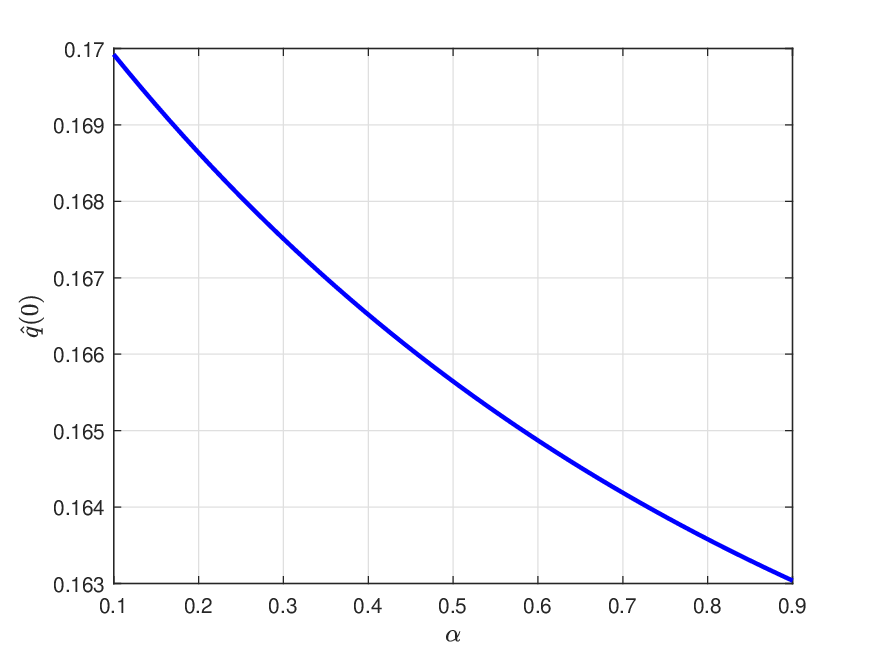}
      \caption*{(b) The impact of $\alpha$ on $\hat{q}(0)$ in case (II)}
    \end{minipage}
    \begin{minipage}{6.5cm}
      \includegraphics[width=6cm]{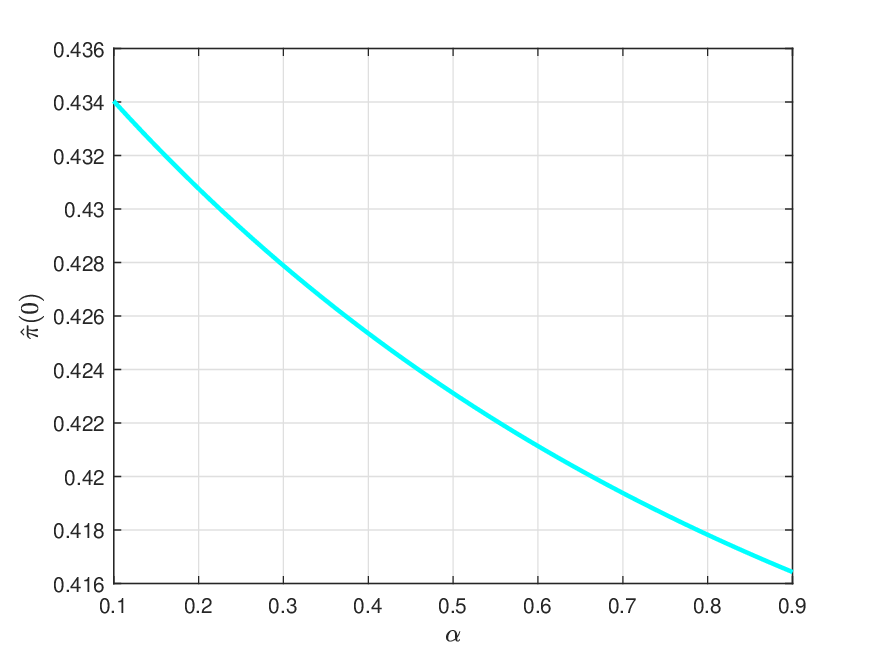}
      \caption*{(c) The impact of $\alpha$ on $\hat{\pi}(0)$ in case (I)}
    \end{minipage}
    \begin{minipage}{6.5cm}
      \includegraphics[width=6cm]{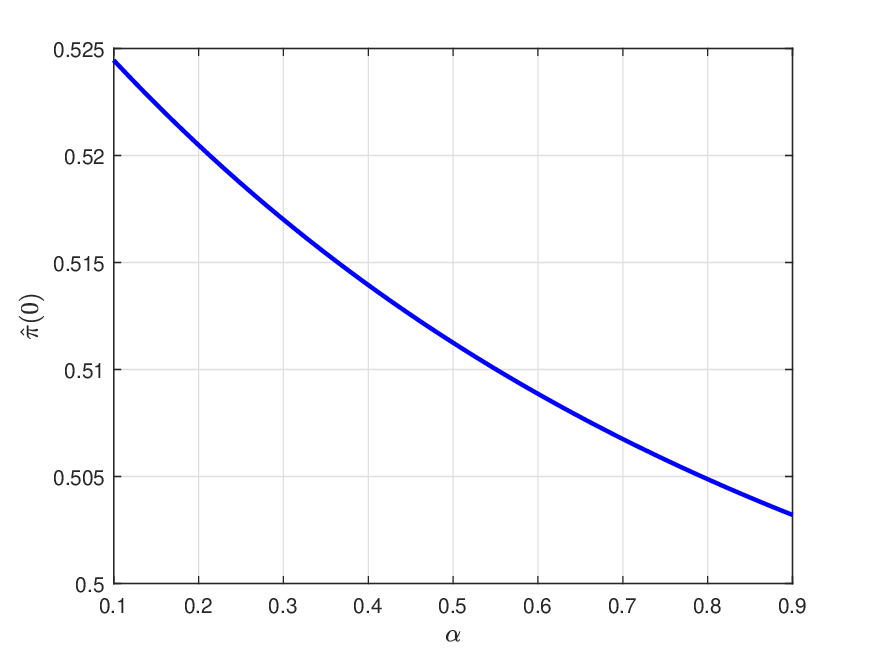}
      \caption*{(d) The impact of $\alpha$ on $\hat{\pi}(0)$ in case (II)}
    \end{minipage}
    \caption{The impacts of $\alpha$ on the equilibrium reinsurance and investment strategy.}
\label{fig2}
\end{figure}

Figures \ref{fig2} and \ref{fig3} demonstrate the impacts of delay parameters $\alpha$, $\beta$ and $h$ on $(\hat{q}(0), \hat{\pi}(0))$. Specifically, both $\hat{q}(0)$ and $\hat{\pi}(0)$ decrease as $\alpha$ increases, and increase as $\beta$ or $h$ increases. First of all, we can see from the expression of $M_{1}^{u}(t)$ that with an increase in $\alpha$, the weight of earlier wealth in the insurer's average delayed wealth diminishes, which reflects the insurer attach more importance to wealth nearer to the present time as $\alpha$ grows. Under the power utility function, the insurer would like to procure additional reinsurance and decrease the share of the risky asset in her/his portfolios. What is more, in accordance with the definition of $X^{u}(T)+\beta M_{1}^{u}(T)$, as the delay weight $\beta$ rises, the cumulative delayed information of the insurer's wealth carries more weight, while the current wealth's weight is reduced proportionally. Then, the insurer employing the power utility function is more inclined to reduce her/his demand for reinsurance and increase the proportion allocated to the risky asset in portfolios. Additionally, since the delay time $h$ grows, the span of the averaging period lengthens, which in turn stabilizes the cumulative delayed information regarding the insurer's wealth. This may motivate the insurer who uses the power utility function to decrease the demand for reinsurance and raise the share of the risky asset in investments.

\begin{figure}
    \centering
    \begin{minipage}{6.5cm}
      \includegraphics[width=6cm]{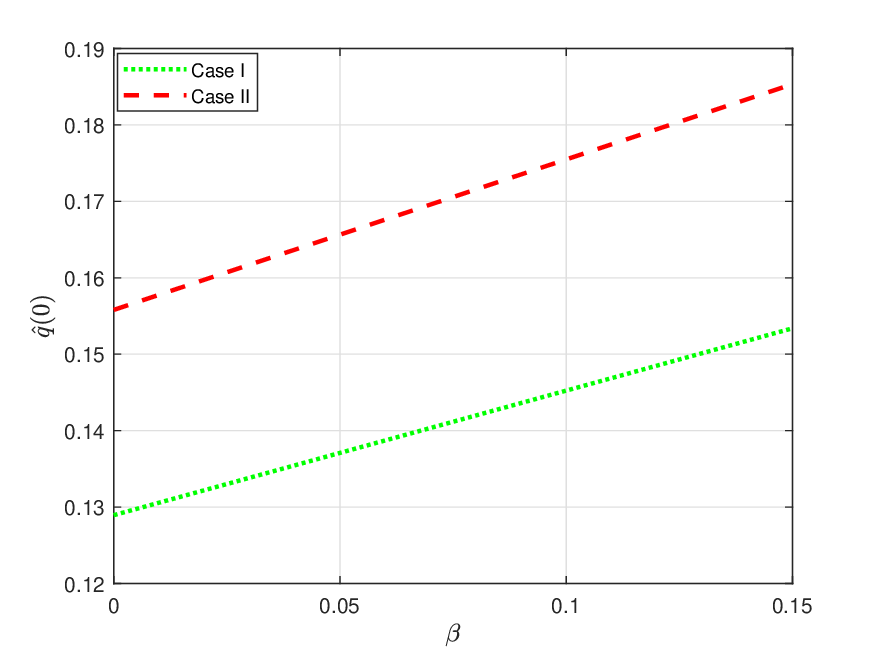}
      \caption*{(a) The impact of $\beta$ on $\hat{q}(0)$ }
    \end{minipage}
     \begin{minipage}{6.5cm}
      \includegraphics[width=6cm]{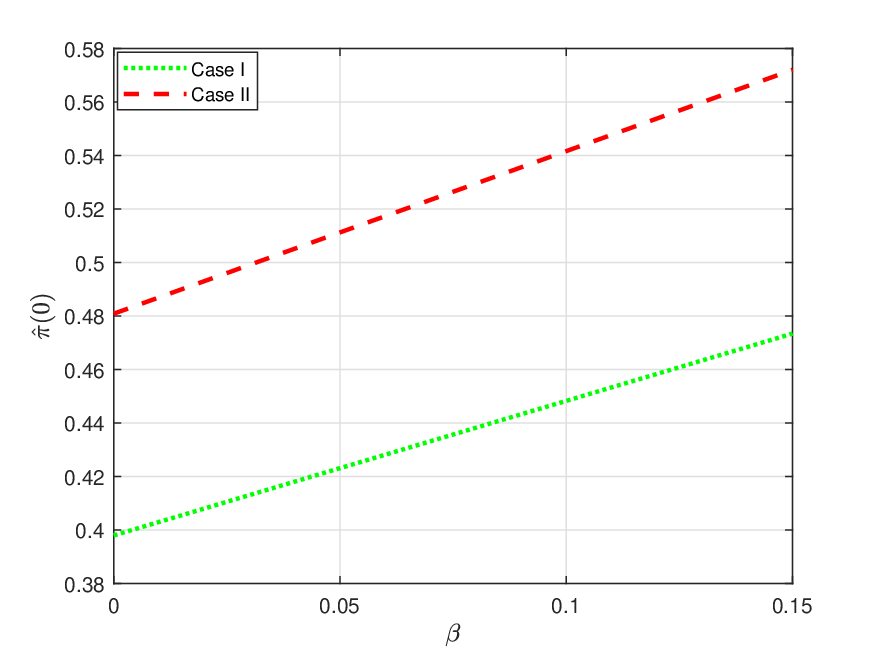}
      \caption*{(b) The impact of $\beta$ on $\hat{\pi}(0)$ }
    \end{minipage}
    \begin{minipage}{6.5cm}
      \includegraphics[width=6cm]{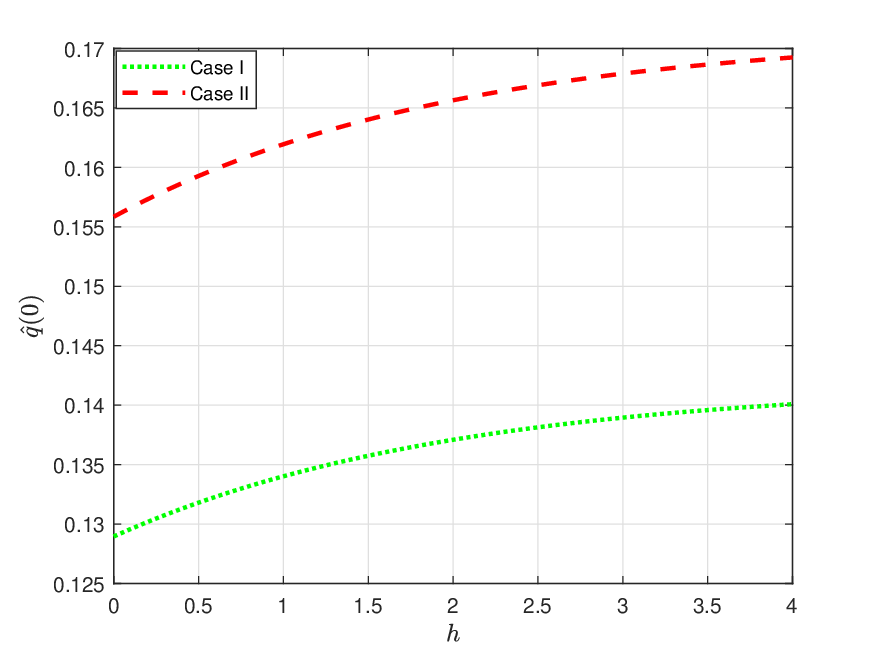}
      \caption*{(c) The impact of $h$ on $\hat{q}(0)$}
    \end{minipage}
    \begin{minipage}{6.5cm}
      \includegraphics[width=6cm]{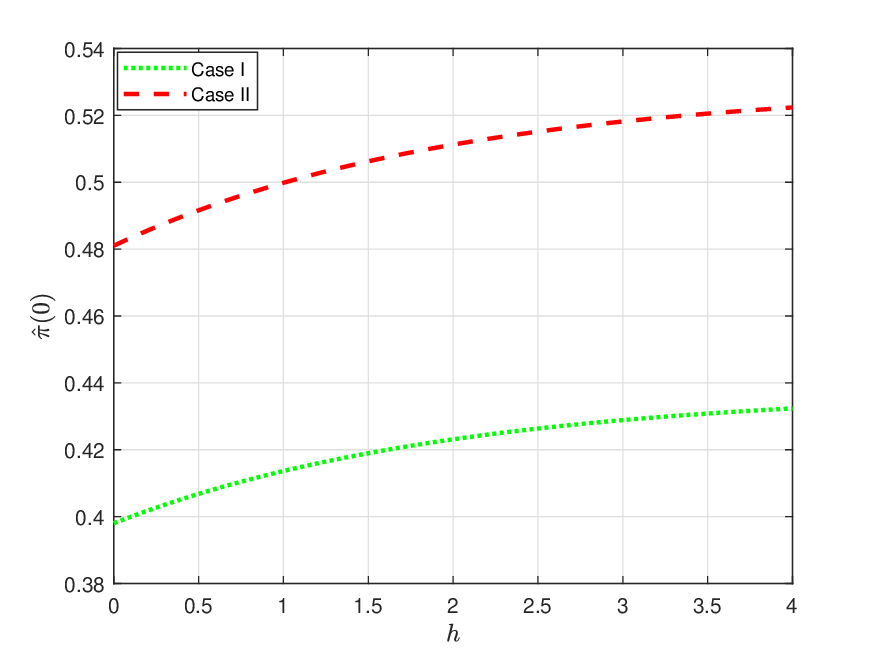}
      \caption*{(d) The impact of $h$ on $\hat{\pi}(0)$}
    \end{minipage}
    \caption{The impacts of $\beta$ and $h$ on the equilibrium reinsurance and investment strategy.}
\label{fig3}
\end{figure}

\begin{figure}
    \centering
    \begin{minipage}{0.32\textwidth}
     \centering
      \includegraphics[width=\linewidth]{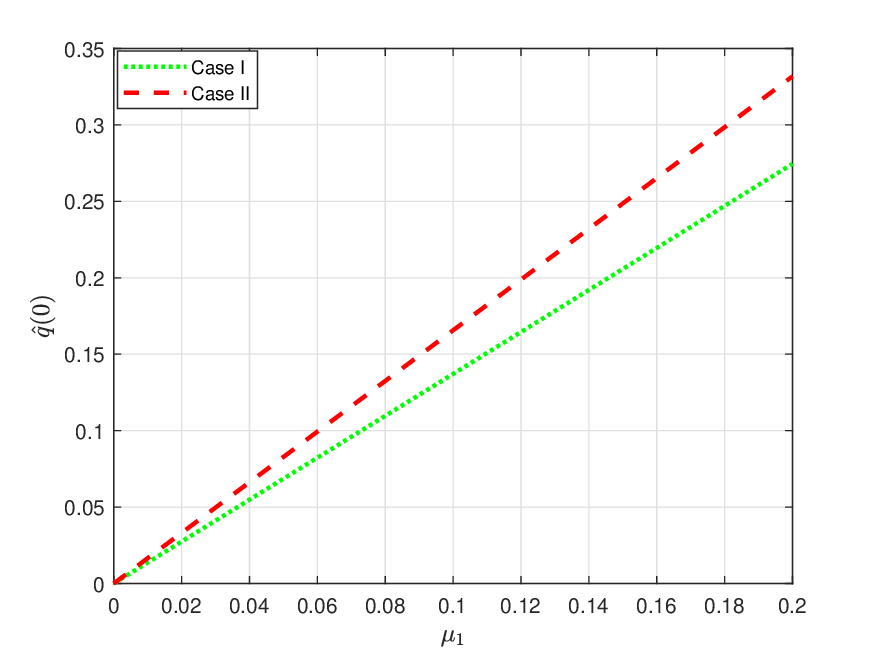}
      \caption*{(a) The impact of $\mu_{1}$ on $\hat{q}(0)$ }
    \end{minipage}
     \begin{minipage}{0.32\textwidth}
     \centering
      \includegraphics[width=\linewidth]{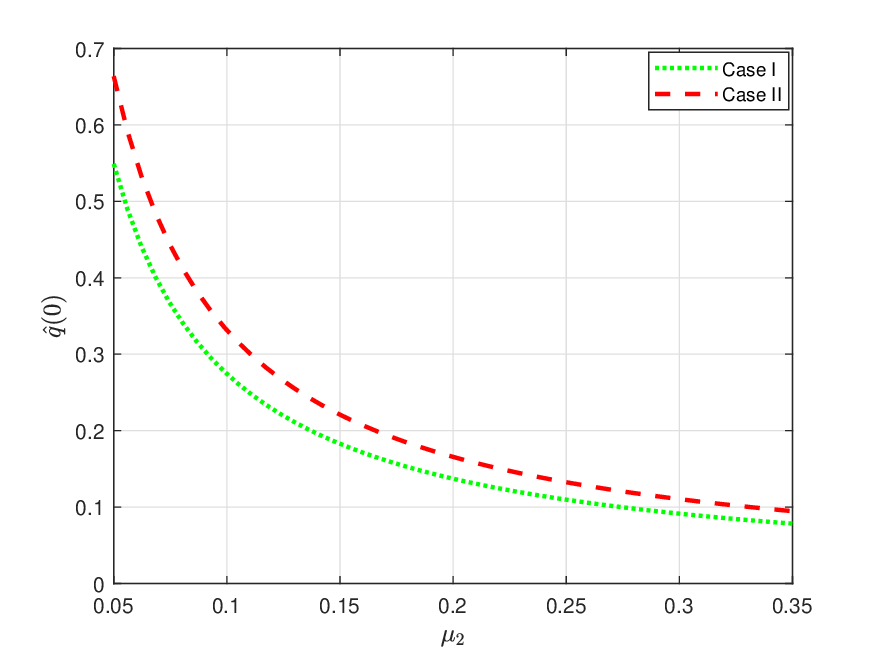}
      \caption*{(b) The impact of $\mu_{2}$ on $\hat{q}(0)$ }
    \end{minipage}
    \begin{minipage}{0.32\textwidth}
    \centering
      \includegraphics[width=\linewidth]{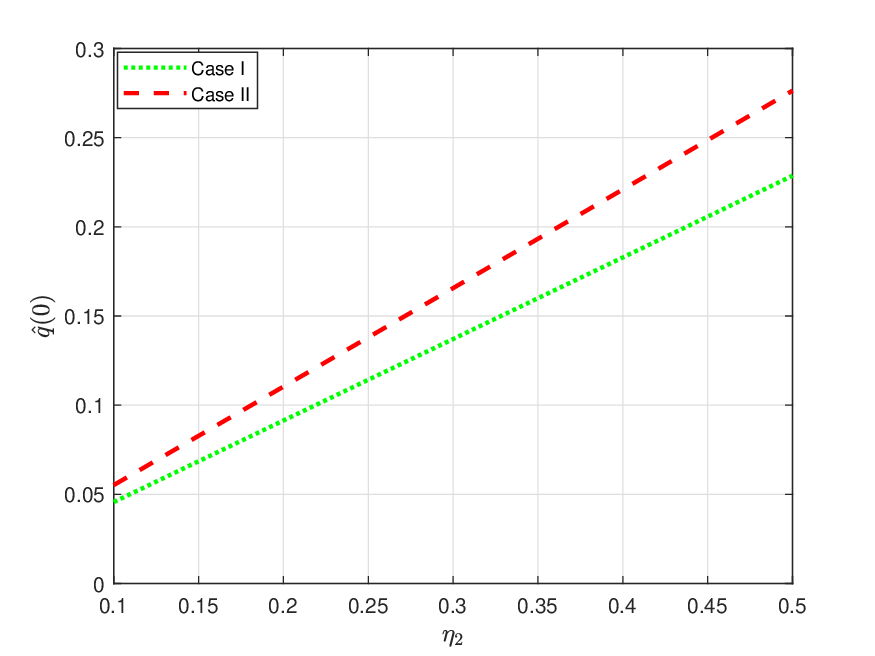}
      \caption*{(c) The impact of $\eta_{2}$ on $\hat{q}(0)$}
    \end{minipage}
    \caption{The impacts of $\mu_{1}$, $\mu_{2}$ and $\eta_{2}$ on the equilibrium reinsurance strategy.}
\label{fig4}
\end{figure}

A comparison of Figures \ref{fig1}-\ref{fig3} reveals that under the exponential utility and power utility, respectively, the same delay parameter changes affect the equilibrium reinsurance and investment strategy differently. This implies that the effects of delay on reinsurance and investment decisions are diverse, thereby reflecting the importance of considering delays.

Figures \ref{fig4} and \ref{fig5} display the effects of insurance market parameters $\mu_{1}$, $\mu_{2}$ and $\eta_{2}$ on $(\hat{q}(0), \hat{\pi}(0))$. Subgraph \ref{fig4} $(a)$ and subgraphs \ref{fig5} $(a)$ and $(d)$ illustrate that both $\hat{q}(0)$ and $\hat{\pi}(0)$ increase monotonically with $\mu_{1}$. We know that $\mu_{1}$ represents the first moment of the claim size. If $\mu_{1}$ increases, then it suggests that the insurer is required to pay a higher premium to the reinsurer, thus encouraging she/he to assume more insurance risks independently and correspondingly reduce the purchase of reinsurance. Under the premise that the insurer is willing to assume more risks, she/he also tends to direct a larger share of investments toward the risky asset to pursue higher returns. Moreover, subgraph \ref{fig4} $(b)$ and subgraphs \ref{fig5} $(b)$ and $(e)$ show that both $\hat{q}(0)$ and $\hat{\pi}(0)$ decrease monotonically with $\mu_{2}$. Since $\mu_{2}$ represents the second moment of the claim size, an increase in $\mu_{2}$ implies higher uncertainty within the insurance market. Consequently, the insurer faces higher risks in the insurance market, leading her/him to prefer purchasing more reinsurance to mitigate her/his own risks. Under the premise that the insurer seeks to avoid more risks, she/he is also willing to allocate a smaller proportion of investments to the risky asset. In addition, it can be seen from subgraph \ref{fig4} $(c)$ and subgraphs \ref{fig5} $(c)$ and $(f)$ that both $\hat{q}(0)$ and $\hat{\pi}(0)$ increase alongside a rise in $\eta_{2}$. If the reinsurer's safety loading $\eta_{2}$ increases, then it implies that the insurer needs to bear higher reinsurance costs. In this case, the insurer becomes more disposed to assume risks by her/his own accord and then reduces the purchase of reinsurance. When the insurer opts to shoulder more risks, she/he also exhibits a tendency to allocate a larger share of investment to the risky asset proportionally.

\begin{figure}
    \centering
    \begin{minipage}{0.32\textwidth}
     \centering
      \includegraphics[width=\linewidth]{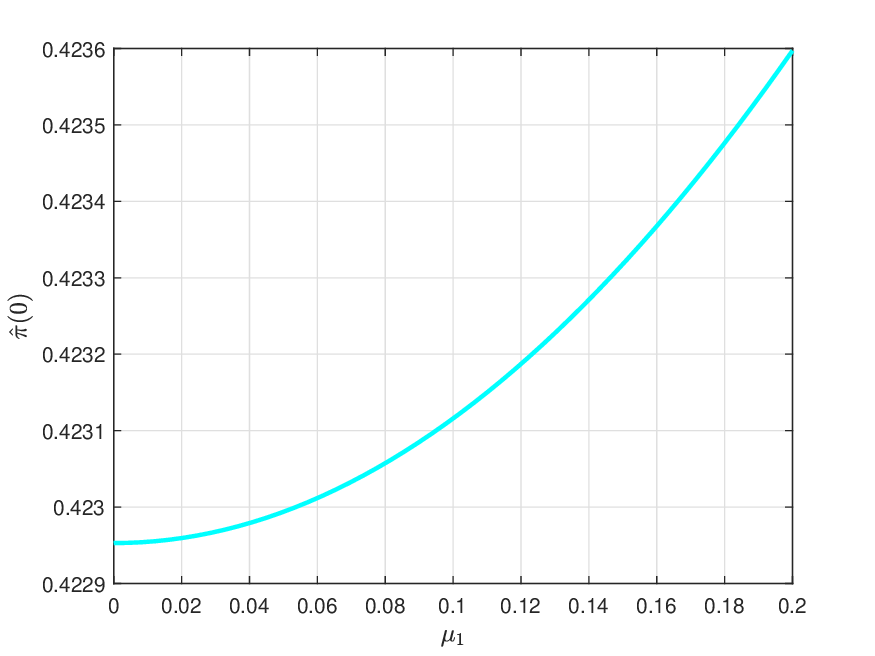}
      \caption*{(a) The impact of $\mu_{1}$ on $\hat{\pi}(0)$ in case (I)}
    \end{minipage}
     \begin{minipage}{0.32\textwidth}
     \centering
      \includegraphics[width=\linewidth]{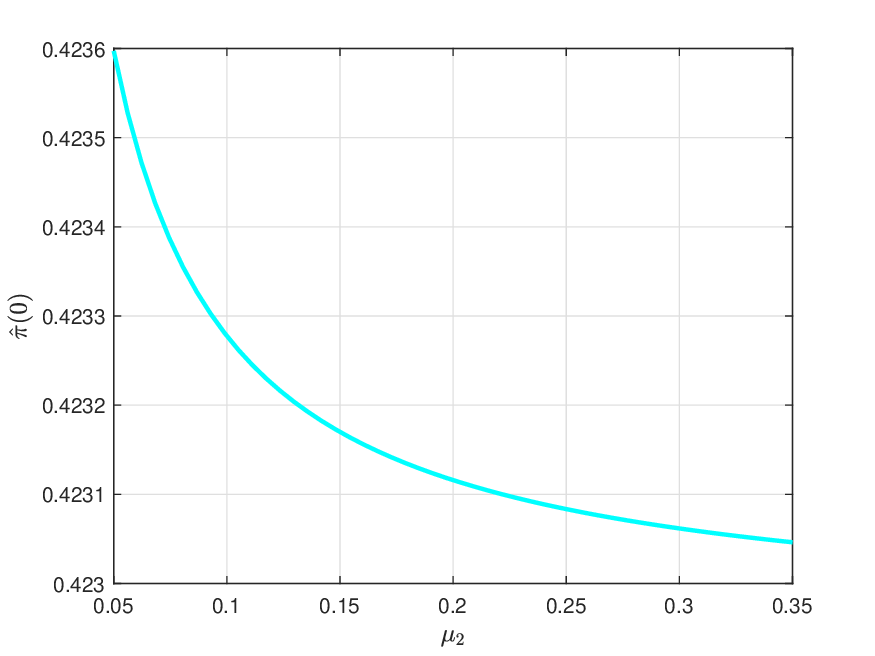}
      \caption*{(b) The impact of $\mu_{2}$ on $\hat{\pi}(0)$ in case (I)}
    \end{minipage}
    \begin{minipage}{0.32\textwidth}
    \centering
      \includegraphics[width=\linewidth]{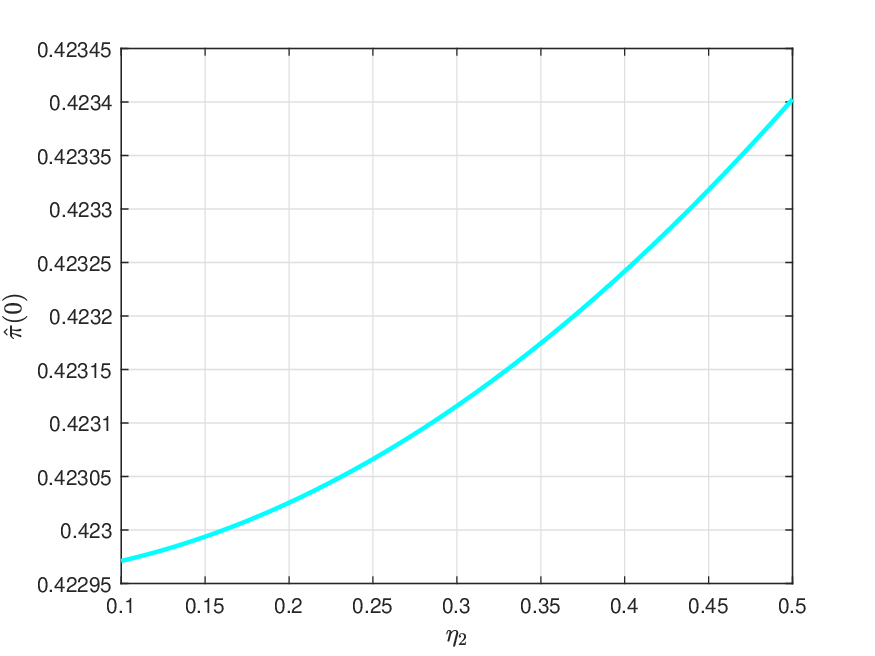}
      \caption*{(c) The impact of $\eta_{2}$ on $\hat{\pi}(0)$ in case (I)}
    \end{minipage}
    \begin{minipage}{0.32\textwidth}
     \centering
      \includegraphics[width=\linewidth]{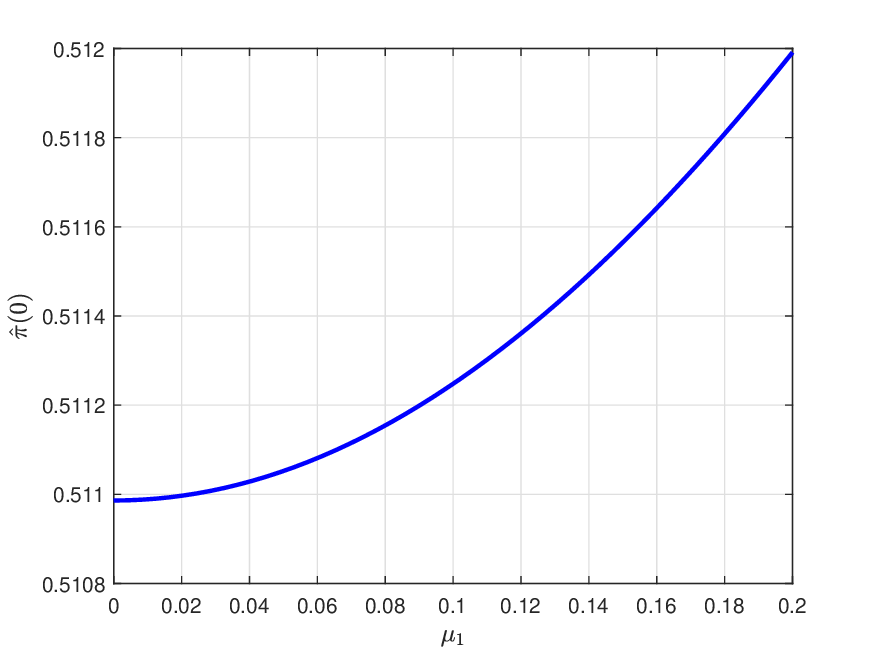}
      \caption*{(d) The impact of $\mu_{1}$ on $\hat{\pi}(0)$ in case (II)}
    \end{minipage}
     \begin{minipage}{0.32\textwidth}
     \centering
      \includegraphics[width=\linewidth]{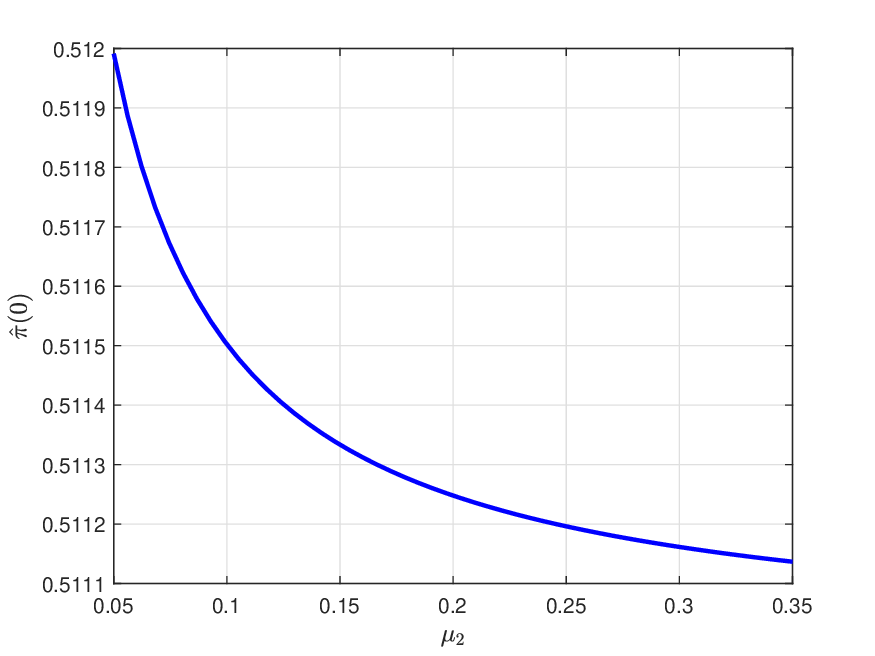}
      \caption*{(e) The impact of $\mu_{2}$ on $\hat{\pi}(0)$ in case (II)}
    \end{minipage}
    \begin{minipage}{0.32\textwidth}
    \centering
      \includegraphics[width=\linewidth]{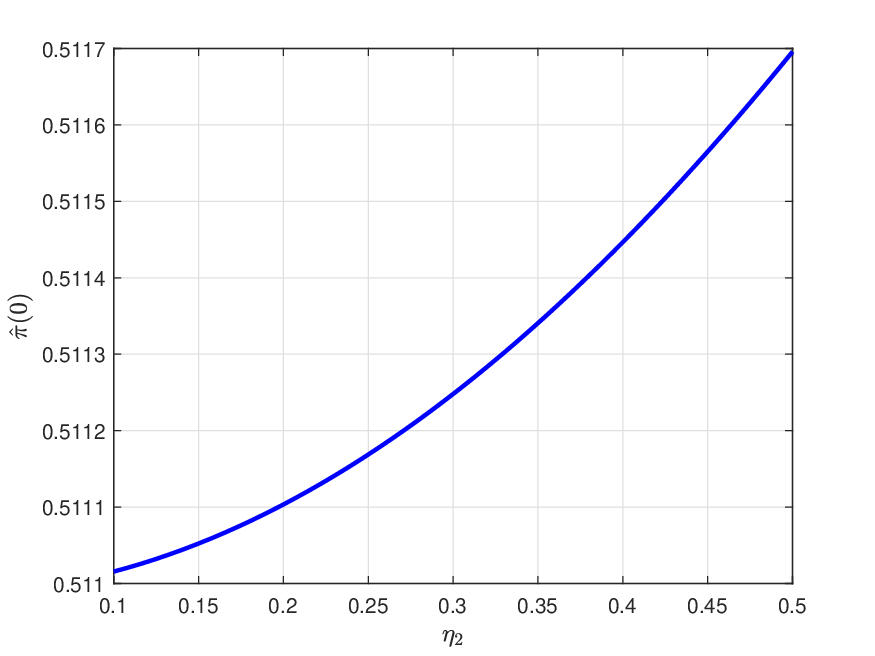}
      \caption*{(f) The impact of $\eta_{2}$ on $\hat{\pi}(0)$ in case (II)}
    \end{minipage}
    \caption{The impacts of $\mu_{1}$, $\mu_{2}$ and $\eta_{2}$ on the equilibrium investment strategy.}
\label{fig5}
\end{figure}

\begin{figure}
    \centering
    \begin{minipage}{0.32\textwidth}
     \centering
      \includegraphics[width=\linewidth]{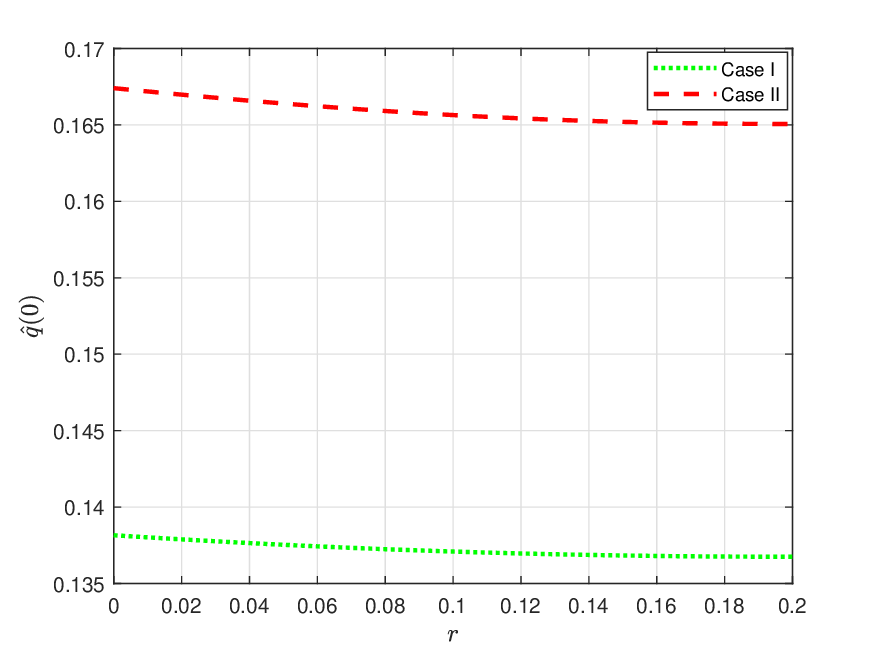}
      \caption*{(a) The impact of $r$ on $\hat{q}(0)$ }
    \end{minipage}
     \begin{minipage}{0.32\textwidth}
     \centering
      \includegraphics[width=\linewidth]{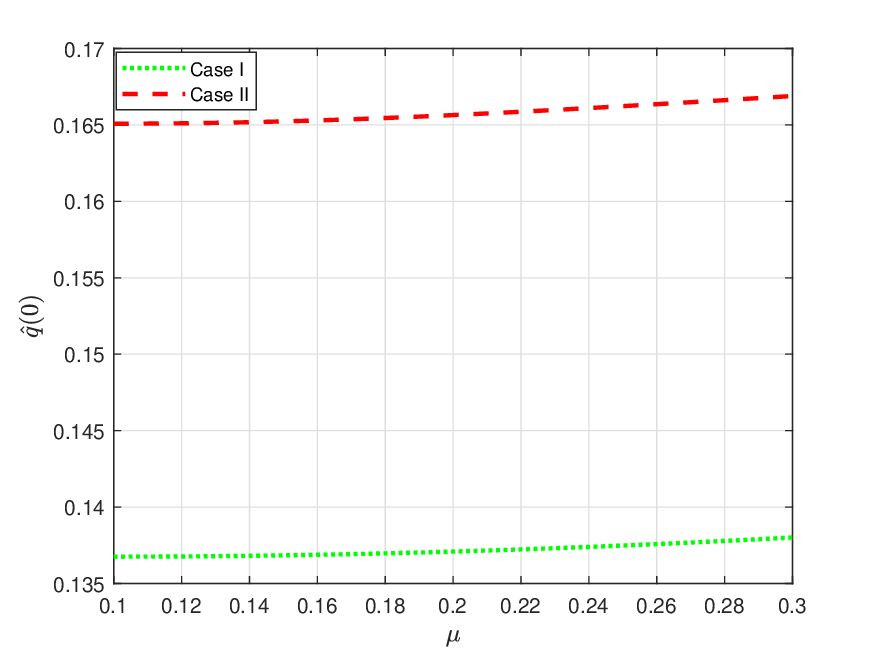}
      \caption*{(b) The impact of $\mu$ on $\hat{q}(0)$ }
    \end{minipage}
    \begin{minipage}{0.32\textwidth}
    \centering
      \includegraphics[width=\linewidth]{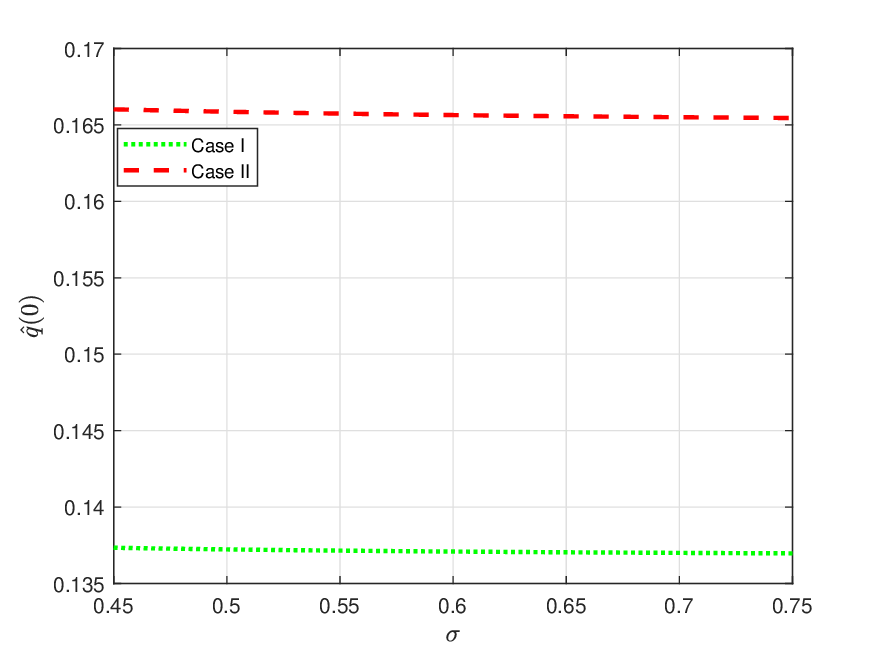}
      \caption*{(c) The impact of $\sigma$ on $\hat{q}(0)$}
    \end{minipage}
    \begin{minipage}{0.32\textwidth}
     \centering
      \includegraphics[width=\linewidth]{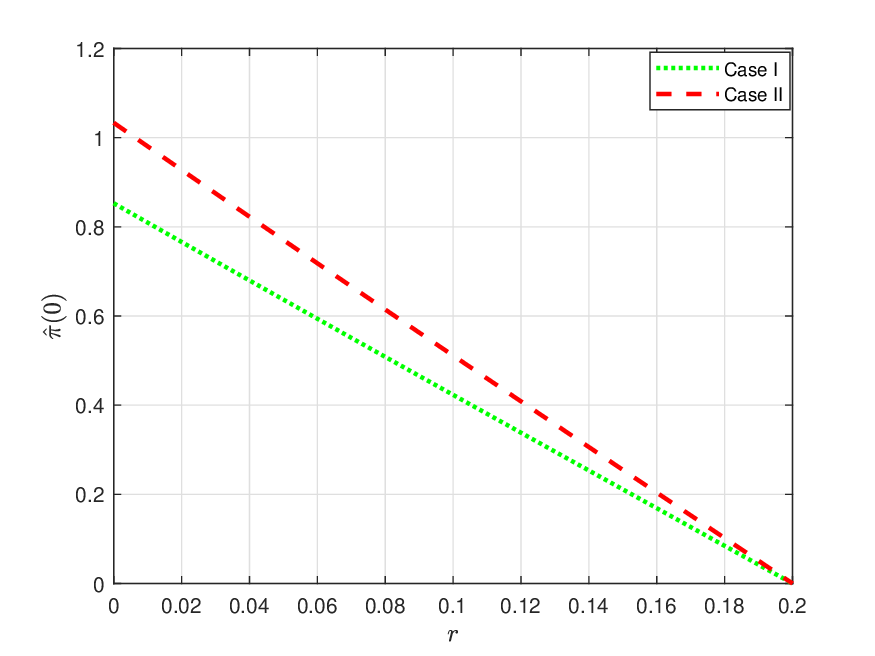}
      \caption*{(d) The impact of $r$ on $\hat{\pi}(0)$ }
    \end{minipage}
     \begin{minipage}{0.32\textwidth}
     \centering
      \includegraphics[width=\linewidth]{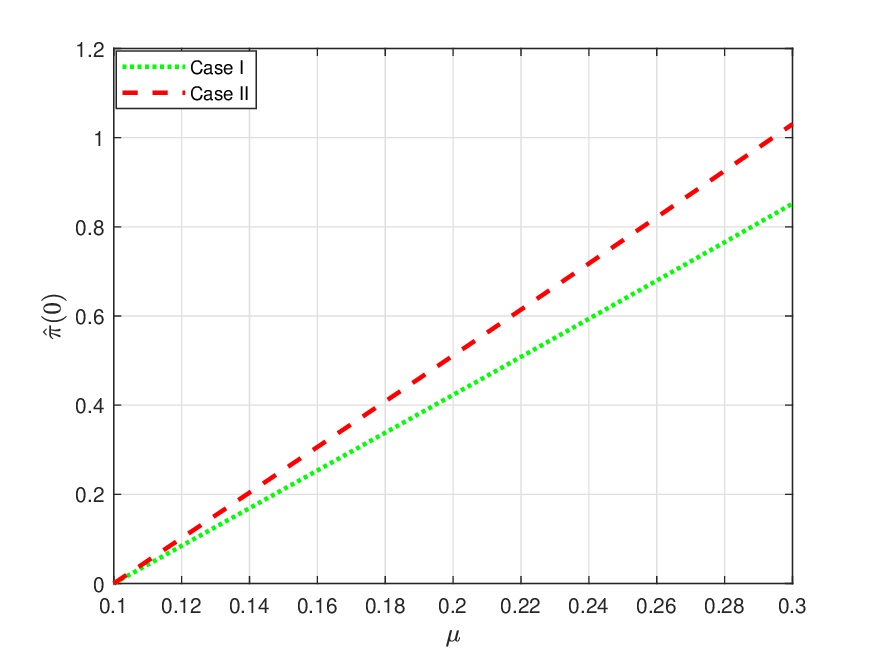}
      \caption*{(e) The impact of $\mu$ on $\hat{\pi}(0)$ }
    \end{minipage}
    \begin{minipage}{0.32\textwidth}
    \centering
      \includegraphics[width=\linewidth]{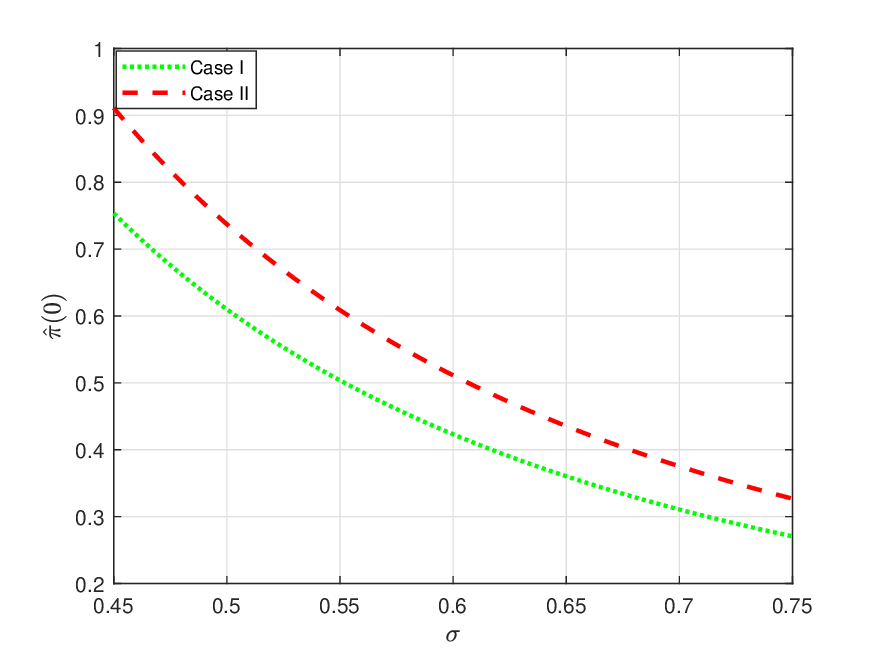}
      \caption*{(f) The impact of $\sigma$ on $\hat{\pi}(0)$}
    \end{minipage}
    \caption{The impacts of $r$, $\mu$ and $\sigma$ on the equilibrium reinsurance and investment strategy.}
\label{fig6}
\end{figure}

Figure \ref{fig6} characterizes how financial market parameters $r$, $\mu$ and $\sigma$ affect $(\hat{q}(0), \hat{\pi}(0))$. It follows from subgraph \ref{fig6} $(a)$ that an increase in $r$ leads to a reduction in $\hat{q}(0)$, suggesting that the insurer needs to secure more reinsurance when $r$ rises. Furthermore, subgraph \ref{fig6} $(d)$ demonstrates that $r$ has a similar influence pattern on $\hat{\pi}(0)$. When the risk-free interest rate $r$ increases, returns from risk-free asset investments increase, thereby attracting the insurer to increase the investment proportion in such a asset. Correspondingly, the proportion directed toward the risky asset will decrease. In addition, subgraphs \ref{fig6} $(b)$ and $(e)$ reveal that both $\hat{q}(0)$ and $\hat{\pi}(0)$ rise as $\mu$ increases. As $\mu$ signifies the expected return of the risky asset, an upward movement in $\mu$ indicates higher expected payoffs. Therefore, the insurer is prepared to accept greater risks to expand the share of the risky asset in her/his investments, seeking to gain higher returns. In such a case where the insurer opts to take on more risks, it tends to reduce the reinsurance purchases. What is more, subgraphs \ref{fig6} $(c)$ and $(f)$ show that both $\hat{q}(0)$ and $\hat{\pi}(0)$ decrease monotonically with $\sigma$. A rise in the volatility $\sigma$ of the risky asset leads to significant price fluctuations and instability. To avoid risks, insurers will cut the proportion of her/his investments in such a asset. Then, given that the insurer wants to avoid risks, she/he will also increase the purchase of reinsurance.

By comparing Figures \ref{fig4}, \ref{fig5} and \ref{fig6}, we find that parameters $\mu_{1}$, $\mu_{2}$ and $\eta_{2}$ have more significant impacts on $\hat{q}(0)$, while their impacts on $\hat{\pi}(0)$ are relatively smaller. On the other hand, it can also be seen that parameters $r$, $\mu$ and $\sigma$ have more pronounced effects on $\hat{\pi}(0)$, whereas their effects on $\hat{q}(0)$ are comparatively minor.

\section{Conclusions}
This paper focused on the reinsurance and investment optimization issue with delay for an insurer adopting random risk aversion. Specifically, based on a diffusion approximation model for the insurer's surplus evolution, the insurer can transfer risks via proportional reinsurance, grow the business through new policy acquisitions, and engage in investments within the financial market captured by the CEV model. To begin with, we posed the delayed optimal reinsurance and investment problem with random risk aversion by expected certainty equivalent and then formulated the verification theorem to describe the equilibrium reinsurance and investment strategy and the associated equilibrium value function under the game theory framework. Next, we obtained (semi-)analytical equilibrium reinsurance and investment strategies and equilibrium value functions under the CEV model for the exponential utility and under the Black-Scholes model for both exponential and power utilities. Finally, we performed numerical simulations to examine the effects of delay parameters and insurance-financial market parameters on the equilibrium reinsurance and investment strategy. The primary findings are outlined below. (I) A higher expected risk aversion level for the insurer leads to a tendency to buy more reinsurance and assign a smaller investment proportion to the risky asset. (II) The same magnitude of delay parameter changes yields different effects on  equilibrium reinsurance and investment strategies for exponential utility versus power utility. To be specific, in the context of exponential utility, both $\hat{q}(0)$ and $\hat{\pi}(0)$ first decrease and then increase when $\alpha$ increases, and decrease when $\beta$ or $h$ increases. In the case of power utility, both $\hat{q}(0)$ and $\hat{\pi}(0)$ decrease when $\alpha$ increases, and increase when $\beta$ or $h$ increases. (III) Under the exponential utility, the proportion of investment in the risky asset will decrease with the increase of the elasticity parameter $\delta$ if the initial value of the stock price is greater than $1$. (IV) Under the power utility, parameters in insurance and financial markets demonstrate analogous influence patterns on the variations of equilibrium reinsurance and investment strategies. In particular, both $\hat{q}(0)$ and $\hat{\pi}(0)$ decrease when $\mu_{2}$, $r$ or $\sigma$ increases, and increase when $\mu_{1}$, $\eta_{2}$ or $\mu$ increases.

Several compelling research directions are worthy of additional study. The contract optimizing an insurer's objective function generally fails to simultaneously maximize the reinsurer's value functional. Hence, the bilateral optimization of reinsurance and investment strategies has evolved into a canonical application of non-cooperative game theory in actuarial science \cite{Bai2022}. Moreover, for the purpose of protecting insurers against highly risky market positions, implementing risk measures, particularly Value-at-Risk (VaR) and Conditional VaR (CVaR), to control market risk exposures is of assistance \cite{Bi2019}. Therefore, it would be meaningful and practical to consider the delayed optimal reinsurance and investment game with VaR (or CVaR) constraints under random risk aversion. We intend to address these issues in subsequent research.

\section*{Appendices}

\appendix
\renewcommand{\appendixname}{Appendix~\Alph{section}}

\section{Proof of Lemma \ref{lemma3.2}}
\begin{proof}
If $X^{u}(\cdot)\in\mathbb{S}^{2}_{\mathcal{F}}(0,T;\mathbb{R})$, then one has
$$
\mathbb{E}\left[\sup_{t\in[0,T]}|X^{u}(t)|\right]\leq\left(\mathbb{E}\left[\left(\sup_{t\in[0,T]}|X^{u}(t)|\right)^{2}\right]\right)^{\frac{1}{2}}
=\left(\mathbb{E}\left[\sup_{t\in[0,T]}|X^{u}(t)|^{2}\right]\right)^{\frac{1}{2}}<\infty,
$$
which implies that $X^{u}(\cdot)\in\mathbb{S}^{1}_{\mathcal{F}}(0,T;\mathbb{R})$. Since $\mathbb{E}\left[\int^{T}_{0}| X^{u}(t)|\mathrm{d}t\right]\leq T\mathbb{E}\left[\sup\limits_{t\in[0,T]}|X^{u}(t)|\right]<\infty$, one can derive that $X^{u}(\cdot)\in\mathbb{L}^{1}_{\mathcal{F}}(0,T;\mathbb{R})$. In addition, we have
\begin{align*}
\mathbb{E}\left[\sup_{t\in[0,T]}|M_{2}^{u}(t)|\right]=\mathbb{E}\left[\sup_{t\in[0,T]}|X^{u}(t-h)|\right]
\leq\mathbb{E}\left[\sup_{t\in[0,T]}|X^{u}(t)|\right]+x_{0}<\infty
\end{align*}
and
\begin{align*}
\mathbb{E}\left[\sup_{t\in[0,T]}|M_{1}^{u}(t)|\right]&=\mathbb{E}\left[\sup_{t\in[0,T]}\left|\int^{0}_{-h}e^{\alpha s}X^{u}(t+s)\mathrm{d}s\right|\right]
\leq K\mathbb{E}\left[\sup_{t\in[0,T]}\sup_{s\in[-h,0]}|X^{u}(t+s)|\right]\\
&\leq K\mathbb{E}\left[\sup_{t\in[0,T]}|X^{u}(t)|\right]+Kx_{0}<\infty,
\end{align*}
where $K$ is a positive constant that will be used thereafter when there is no ambiguity, which can take different values in different lines. Thus, $M_{1}^{u}(\cdot),M_{2}^{u}(\cdot)\in\mathbb{S}^{1}_{\mathcal{F}}(0,T;\mathbb{R})$ and then $M_{1}^{u}(\cdot),M_{2}^{u}(\cdot)\in\mathbb{L}^{1}_{\mathcal{F}}(0,T;\mathbb{R})$.
\end{proof}

\section{Proof of Lemma \ref{lemma3.1}}
\begin{proof}
The result can be obtained similarly to the proof of Lemma 2.1 in \cite{Elsanosi2000}, and so we omit details here.
\end{proof}

\section{Proof of Theorem \ref{theorem3.1}}
\begin{proof}

Our proof adapts the techniques developed in Theorem 15.1 of \cite{Bjork2021}, Theorem 3.5 of \cite{Desmettre2023} and Theorem 3.1 of \cite{Kang2026}. To maintain the paper's completeness, we present the argument in three sequential steps.

Step 1. Given the assumed condition $U, Y^{\gamma}, H\in C^{1,2,2,1}(\mathcal{D})$ for all $\gamma$, it follows that $\hat{u}$ is continuous, which consequently ensures the continuity of both the drift and diffusion coefficients in the SDE governing $X^{\hat{u}}$. Furthermore, given that $Y^{\gamma}$ meets the condition $(A1)$ for all $\gamma$, one can use the Feynman-Kac therorem \cite{Yong1999} and equations \eqref{eq15} and \eqref{eq21} to demonstrate that
\begin{equation}\label{eq24}
Y^{\gamma}(t,x,s_{1},m_{1})=\mathbb{E}_{t}\left[\varphi^{\gamma}\left(X^{\hat{u}}(T)+\beta M_{1}^{\hat{u}}(T)\right)\right]= y^{\hat{u},\gamma}(t,x,s_{1},m_{1},m_{2})\; \text{for all}\;\gamma.
\end{equation}

Step 2. The substitution of $u=\hat{u}$ in the pseudo HJB equation \eqref{eq20} leads to
\begin{align*}
\mathcal{A}^{\hat{u}}U(t,x,s_{1},m_{1})-\mathcal{A}^{\hat{u}}H(t,x,s_{1},m_{1})
+\int\iota^{\gamma}(Y^{\gamma}(t,x,s_{1},m_{1}))\mathcal{A}^{\hat{u}}Y^{\gamma}(t,x,s_{1},m_{1})\mathrm{d}\Gamma(\gamma)=0.
\end{align*}
Plugging in the equation \eqref{eq21} then gives
\begin{align}\label{eq25}
\mathcal{A}^{\hat{u}}U(t,x,s_{1},m_{1})-\mathcal{A}^{\hat{u}}H(t,x,s_{1},m_{1})=0.
\end{align}
When $U\in C^{1,2,2,1}(\mathcal{D})$ and $U(t,X^{u}(t),S_{1}(t),M_{1}^{u}(t))$ meets the condition $(A1)$, one applies Lemma \ref{lemma3.1} to get
\begin{align}\label{eq26}
\mathbb{E}_{t}\left[U(T,X^{\hat{u}}(T),S_{1}(T),M_{1}^{\hat{u}}(T))\right]=&U(t,x,s_{1},m_{1})+\mathbb{E}_{t}\left[\int^{T}_{t}\bigg(U_{s}(s,X^{\hat{u}}(s),S_{1}(s),M_{1}^{\hat{u}}(s))
+U_{x}(s,X^{\hat{u}}(s),S_{1}(s),M_{1}^{\hat{u}}(s))\right.\nonumber\\
&\times \left[X^{\hat{u}}(s)\left(A+\hat{\pi}(s)(\mu-r)\right)+BM_{1}^{\hat{u}}(s)+CM_{2}^{\hat{u}}(s)+a\eta+a\eta_{2}\hat{q}(s)\right]\nonumber\\
&+0.5\left(b^{2}\hat{q}^{2}(s)+\hat{\pi}^{2}(s)(X^{\hat{u}}(s))^{2}\sigma^{2}S^{2 \delta}_{1}(s)\right)U_{xx}(s,X^{\hat{u}}(s),S_{1}(s),M_{1}^{\hat{u}}(s))\nonumber\\
&+\mu S_{1}(s)U_{s_{1}}(s,X^{\hat{u}}(s),S_{1}(s),M_{1}^{\hat{u}}(s))+0.5\sigma^{2} S^{2\delta+2}_{1}(s)U_{s_{1}s_{1}}(s,X^{\hat{u}}(s),S_{1}(s),M_{1}^{\hat{u}}(s))\nonumber\\
&+\left(X^{\hat{u}}(s)-\alpha M_{1}^{\hat{u}}(s)-e^{-\alpha h }M_{2}^{\hat{u}}(s)\right)U_{m_{1}}(s,X^{\hat{u}}(s),S_{1}(s),M_{1}^{\hat{u}}(s))\nonumber\\
&+\hat{\pi}(s)X^{\hat{u}}(s)\sigma^{2}S^{2 \delta+1}_{1}(s)U_{xs_{1}}(s,X^{\hat{u}}(s),S_{1}(s),M_{1}^{\hat{u}}(s))\bigg)\mathrm{d}s\bigg].
\end{align}
Similarly, for $H\in C^{1,2,2,1}(\mathcal{D})$ with $H(t,X^{u}(t),S_{1}(t),M_{1}^{u}(t))$ satisfying the condition $(A1)$, we derive
\begin{align}\label{eq27}
\mathbb{E}_{t}\left[H(T,X^{\hat{u}}(T),S_{1}(T),M_{1}^{\hat{u}}(T))\right]=&H(t,x,s_{1},m_{1})+\mathbb{E}_{t}\left[\int^{T}_{t}\bigg(H_{s}(s,X^{\hat{u}}(s),S_{1}(s),M_{1}^{\hat{u}}(s))
+H_{x}(s,X^{\hat{u}}(s),S_{1}(s),M_{1}^{\hat{u}}(s))\right.\nonumber\\
&\times \left[X^{\hat{u}}(s)\left(A+\hat{\pi}(s)(\mu-r)\right)+BM_{1}^{\hat{u}}(s)+CM_{2}^{\hat{u}}(s)+a\eta+a\eta_{2}\hat{q}(s)\right]\nonumber\\
&+0.5\left(b^{2}\hat{q}^{2}(s)+\hat{\pi}^{2}(s)(X^{\hat{u}}(s))^{2}\sigma^{2}S^{2 \delta}_{1}(s)\right)H_{xx}(s,X^{\hat{u}}(s),S_{1}(s),M_{1}^{\hat{u}}(s))\nonumber\\
&+\mu S_{1}(s)H_{s_{1}}(s,X^{\hat{u}}(s),S_{1}(s),M_{1}^{\hat{u}}(s))+0.5\sigma^{2} S^{2\delta+2}_{1}(s)H_{s_{1}s_{1}}(s,X^{\hat{u}}(s),S_{1}(s),M_{1}^{\hat{u}}(s))\nonumber\\
&+\left(X^{\hat{u}}(s)-\alpha M_{1}^{\hat{u}}(s)-e^{-\alpha h }M_{2}^{\hat{u}}(s)\right)H_{m_{1}}(s,X^{\hat{u}}(s),S_{1}(s),M_{1}^{\hat{u}}(s))\nonumber\\
&+\hat{\pi}(s)X^{\hat{u}}(s)\sigma^{2}S^{2 \delta+1}_{1}(s)H_{xs_{1}}(s,X^{\hat{u}}(s),S_{1}(s),M_{1}^{\hat{u}}(s))\bigg)\mathrm{d}s\bigg].
\end{align}
We emphasize that equations \eqref{eq26} and \eqref{eq27} represent general results valid for all $U, H\in C^{1,2,2,1}(\mathcal{D})$ satisfying the condition $(A1)$, independent of both equations \eqref{eq20} and \eqref{eq21}. The terminal condition $U(T,x,s_{1},m_{1})=H(T,x,s_{1},m_{1})$ together with equations \eqref{eq17}, \eqref{eq22} and \eqref{eq24}-\eqref{eq27} collectively yield
\begin{align}\label{eq28}
U(t,x,s_{1},m_{1})&=H(t,x,s_{1},m_{1})=\int(\varphi^{\gamma})^{-1}\left(Y^{\gamma}(t,x,s_{1},m_{1})\right)\mathrm{d}\Gamma(\gamma)\nonumber\\
&=\int(\varphi^{\gamma})^{-1}\left(y^{\hat{u},\gamma}(t,x,s_{1},m_{1},m_{2})\right)\mathrm{d}\Gamma(\gamma)
=J^{\hat{u}}(t,x,s_{1},m_{1},m_{2}).
\end{align}

Step 3. Analogous to Lemma 3.8 of \cite{Bjork2014}, one establishes that
\begin{align*}
J^{u_{\varepsilon}}(t,x,s_{1},m_{1},m_{2})=&\mathbb{E}_{t}\left[J^{u_{\varepsilon}}\left(t+\varepsilon,X^{u_{\varepsilon}}(t+\varepsilon),S_{1}(t+\varepsilon),M_{1}^{u_{\varepsilon}}(t+\varepsilon),M_{2}^{u_{\varepsilon}}(t+\varepsilon)\right)\right]\nonumber\\
&-\mathbb{E}_{t}\left[\int(\varphi^{\gamma})^{-1}\left(y^{u_{\varepsilon},\gamma}(t+\varepsilon,X^{u_{\varepsilon}}(t+\varepsilon),S_{1}(t+\varepsilon),M_{1}^{u_{\varepsilon}}(t+\varepsilon),M_{2}^{u_{\varepsilon}}(t+\varepsilon))\right)\mathrm{d}\Gamma(\gamma)\right]\nonumber\\
&+\int(\varphi^{\gamma})^{-1}\left(\mathbb{E}_{t}\left[y^{u_{\varepsilon},\gamma}(t+\varepsilon,X^{u_{\varepsilon}}(t+\varepsilon),S_{1}(t+\varepsilon),M_{1}^{u_{\varepsilon}}(t+\varepsilon),M_{2}^{u_{\varepsilon}}(t+\varepsilon))\right]\right)\mathrm{d}\Gamma(\gamma).
\end{align*}
Given that $u_{\varepsilon}=u$ on the interval $[t,t+\varepsilon)$, the continuity of $X^{u_{\varepsilon}}(s)$ for $s\leq t$ implies $X^{u_{\varepsilon}}(t+\varepsilon)=X^{u}(t+\varepsilon)$, $M_{1}^{u_{\varepsilon}}(t+\varepsilon)=M_{1}^{u}(t+\varepsilon)$ and $M_{2}^{u_{\varepsilon}}(t+\varepsilon)=M_{2}^{u}(t+\varepsilon)$. Furthermore, given the equality $u_{\varepsilon}=\hat{u}$ on $[t+\varepsilon, T]$, the equation \eqref{eq28} consequently implies
$$
J^{u_{\varepsilon}}\left(t+\varepsilon,X^{u_{\varepsilon}}(t+\varepsilon),S_{1}(t+\varepsilon),M_{1}^{u_{\varepsilon}}(t+\varepsilon),M_{2}^{u_{\varepsilon}}(t+\varepsilon)\right)
=U\left(t+\varepsilon,X^{u}(t+\varepsilon),S_{1}(t+\varepsilon),M_{1}^{u}(t+\varepsilon)\right).
$$
Moreover, one has
$$
y^{u_{\varepsilon},\gamma}\left(t+\varepsilon,X^{u_{\varepsilon}}(t+\varepsilon),S_{1}(t+\varepsilon),M_{1}^{u_{\varepsilon}}(t+\varepsilon),M_{2}^{u_{\varepsilon}}(t+\varepsilon)\right)
=y^{\hat{u},\gamma}\left(t+\varepsilon,X^{u}(t+\varepsilon),S_{1}(t+\varepsilon),M_{1}^{u}(t+\varepsilon),M_{2}^{u}(t+\varepsilon)\right).
$$
Consequently, we obtain the following result
\begin{align}\label{eq29}
J^{u_{\varepsilon}}(t,x,s_{1},m_{1},m_{2})=&\mathbb{E}_{t}\left[U(t+\varepsilon,X^{u}(t+\varepsilon),S_{1}(t+\varepsilon),M_{1}^{u}(t+\varepsilon))\right]\nonumber\\
&-\mathbb{E}_{t}\left[\int(\varphi^{\gamma})^{-1}\left(y^{\hat{u},\gamma}(t+\varepsilon,X^{u}(t+\varepsilon),S_{1}(t+\varepsilon),M_{1}^{u}(t+\varepsilon),M_{2}^{u}(t+\varepsilon))\right)\mathrm{d}\Gamma(\gamma)\right]\nonumber\\
&+\int(\varphi^{\gamma})^{-1}\left(\mathbb{E}_{t}\left[y^{\hat{u},\gamma}(t+\varepsilon,X^{u}(t+\varepsilon),S_{1}(t+\varepsilon),M_{1}^{u}(t+\varepsilon),M_{2}^{u}(t+\varepsilon))\right]\right)\mathrm{d}\Gamma(\gamma).
\end{align}
In addition, combining the pseudo HJB \eqref{eq20} and the equation \eqref{eq24} yields
\begin{align}\label{eq30}
\mathcal{A}^{u}U(t,x,s_{1},m_{1})-\mathcal{A}^{u}H(t,x,s_{1},m_{1})
+\int\iota^{\gamma}\left(y^{\hat{u},\gamma}(t,x,s_{1},m_{1},m_{2})\right)\mathcal{A}^{u}y^{\hat{u},\gamma}(t,x,s_{1},m_{1},m_{2})\mathrm{d}\Gamma(\gamma)\leq0.
\end{align}
Given the continuity of the admissible reinsurance and investment strategy $u(t,x,s_{1},m_{1},m_{2})$, we first apply Lemma \ref{lemma3.1} for arbitrary $\varepsilon>0$ and then take the limit $\varepsilon\rightarrow0$ to establish the following identities
\begin{align}\label{eq31}
&\mathbb{E}_{t}\left[U\left(t+\varepsilon,X^{u}(t+\varepsilon),S_{1}(t+\varepsilon),M_{1}^{u}(t+\varepsilon)\right)\right]-U(t,x,s_{1},m_{1})\nonumber\\
=&\varepsilon\left\{\left[(A+\pi(t,x,s_{1},m_{1},m_{2})(\mu-r))x+Bm_{1}+Cm_{2}+a\eta+a\eta_{2}q(t,x,s_{1},m_{1},m_{2})\right]
U_{x}(t,x,s_{1},m_{1})\right.\nonumber\\
&+U_{t}(t,x,s_{1},m_{1})+0.5\left(b^{2}q^{2}(t,x,s_{1},m_{1},m_{2})+\pi^{2}(t,x,s_{1},m_{1},m_{2})x^{2}\sigma^{2}s^{2\delta}_{1}\right)U_{xx}(t,x,s_{1},m_{1})\nonumber\\
&+\mu s_{1}U_{s_{1}}(t,x,s_{1},m_{1})+0.5\sigma^{2}s^{2\delta+2}_{1}U_{s_{1}s_{1}}(t,x,s_{1},m_{1})+\pi(t,x,s_{1},m_{1},m_{2})x \sigma^{2}s^{2\delta+1}_{1}\nonumber\\
&\left.\times U_{xs_{1}}(t,x,s_{1},m_{1})+\left(x-\alpha m_{1}-e^{-\alpha h}m_{2}\right)U_{m_{1}}(t,x,s_{1},m_{1})\right\}+o(\varepsilon)
\end{align}
and
\begin{align}\label{eq32}
&\mathbb{E}_{t}\left[H\left(t+\varepsilon,X^{u}(t+\varepsilon),S_{1}(t+\varepsilon),M_{1}^{u}(t+\varepsilon)\right)\right]-H(t,x,s_{1},m_{1})\nonumber\\
=&\varepsilon\left\{\left[(A+\pi(t,x,s_{1},m_{1},m_{2})(\mu-r))x+Bm_{1}+Cm_{2}+a\eta+a\eta_{2}q(t,x,s_{1},m_{1},m_{2})\right]
H_{x}(t,x,s_{1},m_{1})\right.\nonumber\\
&+H_{t}(t,x,s_{1},m_{1})+0.5\left(b^{2}q^{2}(t,x,s_{1},m_{1},m_{2})+\pi^{2}(t,x,s_{1},m_{1},m_{2})x^{2}\sigma^{2}s^{2\delta}_{1}\right)H_{xx}(t,x,s_{1},m_{1})\nonumber\\
&+\mu s_{1}H_{s_{1}}(t,x,s_{1},m_{1})+0.5\sigma^{2}s^{2\delta+2}_{1}H_{s_{1}s_{1}}(t,x,s_{1},m_{1})+\pi(t,x,s_{1},m_{1},m_{2})x \sigma^{2}s^{2\delta+1}_{1}\nonumber\\
&\left.\times H_{xs_{1}}(t,x,s_{1},m_{1})+\left(x-\alpha m_{1}-e^{-\alpha h}m_{2}\right)H_{m_{1}}(t,x,s_{1},m_{1})\right\}+o(\varepsilon)
\end{align}
as well as
\begin{align}\label{eq33}
&\mathbb{E}_{t}\left[y^{\hat{u},\gamma}\left(t+\varepsilon,X^{u}(t+\varepsilon),S_{1}(t+\varepsilon),M_{1}^{u}(t+\varepsilon),M_{2}^{u}(t+\varepsilon)\right)\right]
-y^{\hat{u},\gamma}(t,x,s_{1},m_{1},m_{2})\nonumber\\
=&\varepsilon\left\{\left[(A+\pi(t,x,s_{1},m_{1},m_{2})(\mu-r))x+Bm_{1}+Cm_{2}+a\eta+a\eta_{2}q(t,x,s_{1},m_{1},m_{2})\right]
y^{\hat{u},\gamma}_{x}(t,x,s_{1},m_{1},m_{2})\right.\nonumber\\
&+y^{\hat{u},\gamma}_{t}(t,x,s_{1},m_{1},m_{2})+0.5\left(b^{2}q^{2}(t,x,s_{1},m_{1},m_{2})+\pi^{2}(t,x,s_{1},m_{1},m_{2})x^{2}\sigma^{2}s^{2\delta}_{1}\right)
y^{\hat{u},\gamma}_{xx}(t,x,s_{1},m_{1},m_{2})\nonumber\\
&+\mu s_{1}y^{\hat{u},\gamma}_{s_{1}}(t,x,s_{1},m_{1},m_{2})+0.5\sigma^{2}s^{2\delta+2}_{1}y^{\hat{u},\gamma}_{s_{1}s_{1}}(t,x,s_{1},m_{1},m_{2})+\pi(t,x,s_{1},m_{1},m_{2})x \sigma^{2}s^{2\delta+1}_{1}\nonumber\\
&\left.\times y^{\hat{u},\gamma}_{xs_{1}}(t,x,s_{1},m_{1},m_{2})+\left(x-\alpha m_{1}-e^{-\alpha h}m_{2}\right)y^{\hat{u},\gamma}_{m_{1}}(t,x,s_{1},m_{1},m_{2})\right\}+o(\varepsilon),
\end{align}
where $o(\varepsilon)$ represents a higher-order infinitesimal and we use the fact derived from the equation \eqref{eq24} that $y^{\hat{u},\gamma}(t,x,s_{1},m_{1},m_{2})$ is independent of $m_{2}$. Then, from the equation \eqref{eq33}, one has
\begin{align}\label{eq34}
&\int(\varphi^{\gamma})^{-1}\left(\mathbb{E}_{t}\left[y^{\hat{u},\gamma}\left(t+\varepsilon,X^{u}(t+\varepsilon),S_{1}(t+\varepsilon),M_{1}^{u}(t+\varepsilon),M_{2}^{u}(t+\varepsilon)\right)\right]\right)\mathrm{d}\Gamma(\gamma)\nonumber\\
&\quad-\int(\varphi^{\gamma})^{-1}\left(y^{\hat{u},\gamma}(t,x,s_{1},m_{1},m_{2})\right)\mathrm{d}\Gamma(\gamma)
\nonumber\\
=&\varepsilon\left\{\int\iota^{\gamma}\left(y^{\hat{u},\gamma}(t,x,s_{1},m_{1},m_{2})\right)\bigg(y^{\hat{u},\gamma}_{t}(t,x,s_{1},m_{1},m_{2})+\big[(A+\pi(t,x,s_{1},m_{1},m_{2})(\mu-r))x\right.\nonumber\\
&+Bm_{1}+Cm_{2}+a\eta+a\eta_{2}q(t,x,s_{1},m_{1},m_{2})\big]
y^{\hat{u},\gamma}_{x}(t,x,s_{1},m_{1},m_{2})\nonumber\\
&+0.5\left(b^{2}q^{2}(t,x,s_{1},m_{1},m_{2})+\pi^{2}(t,x,s_{1},m_{1},m_{2})x^{2}\sigma^{2}s^{2\delta}_{1}\right)y^{\hat{u},\gamma}_{xx}(t,x,s_{1},m_{1},m_{2})\nonumber\\
&+\mu s_{1}y^{\hat{u},\gamma}_{s_{1}}(t,x,s_{1},m_{1},m_{2})+0.5\sigma^{2}s^{2\delta+2}_{1}y^{\hat{u},\gamma}_{s_{1}s_{1}}(t,x,s_{1},m_{1},m_{2})+\pi(t,x,s_{1},m_{1},m_{2})x \sigma^{2}s^{2\delta+1}_{1}\nonumber\\
&\times y^{\hat{u},\gamma}_{xs_{1}}(t,x,s_{1},m_{1},m_{2})+\left(x-\alpha m_{1}-e^{-\alpha h}m_{2}\right)y^{\hat{u},\gamma}_{m_{1}}(t,x,s_{1},m_{1},m_{2})\bigg)\mathrm{d}\Gamma(\gamma)\bigg\}+o(\varepsilon).
\end{align}
Combining equations \eqref{eq31}, \eqref{eq32} and \eqref{eq34} with the equation \eqref{eq30} yields
\begin{align*}
&\mathbb{E}_{t}\left[U(t+\varepsilon,X^{u}(t+\varepsilon),S_{1}(t+\varepsilon),M_{1}^{u}(t+\varepsilon))\right]-U(t,x,s_{1},m_{1})-\mathbb{E}_{t}\left[H(t+\varepsilon,X^{u}(t+\varepsilon),S_{1}(t+\varepsilon),M_{1}^{u}(t+\varepsilon))\right]\\
&\quad+H(t,x,s_{1},m_{1})+\int(\varphi^{\gamma})^{-1}\left(\mathbb{E}_{t}\left[y^{\hat{u},\gamma}(t+\varepsilon,X^{u}(t+\varepsilon),S_{1}(t+\varepsilon),M_{1}^{u}(t+\varepsilon),M_{2}^{u}(t+\varepsilon))\right]\right)\mathrm{d}\Gamma(\gamma)\\
&\quad-\int(\varphi^{\gamma})^{-1}\left(y^{\hat{u},\gamma}(t,x,s_{1},m_{1},m_{1})\right)\mathrm{d}\Gamma(\gamma)\leq  o(\varepsilon).
\end{align*}
Consequently, equations \eqref{eq22}, \eqref{eq28} and \eqref{eq29} collectively imply that
\begin{align*}
U(t,x,s_{1},m_{1})\geq&\mathbb{E}_{t}\left[U(t+\varepsilon,X^{u}(t+\varepsilon),S_{1}(t+\varepsilon),M_{1}^{u}(t+\varepsilon))\right]-\mathbb{E}_{t}\left[H(t+\varepsilon,X^{u}(t+\varepsilon),S_{1}(t+\varepsilon),M_{1}^{u}(t+\varepsilon))\right]\\
&+\int(\varphi^{\gamma})^{-1}\left(\mathbb{E}_{t}\left[y^{\hat{u},\gamma}(t+\varepsilon,X^{u}(t+\varepsilon),S_{1}(t+\varepsilon),M_{1}^{u}(t+\varepsilon),M_{2}^{u}(t+\varepsilon))\right]\right)\mathrm{d}\Gamma(\gamma)
+o(\varepsilon)\\
=&J^{u_{\varepsilon}}(t,x,s_{1},m_{1},m_{2})+o(\varepsilon)
\end{align*}
and so $J^{\hat{u}}(t,x,s_{1},m_{1},m_{2})\geq J^{u_{\varepsilon}}(t,x,s_{1},m_{1},m_{2})+o(\varepsilon)$.
Thus,
$
\underset{\varepsilon\rightarrow0}\liminf\frac{J^{\hat{u}}(t,x,s_{1},m_{1},m_{2})-J^{u_{\varepsilon}}(t,x,s_{1},m_{1},m_{2})}{\varepsilon}\geq0,
$
which establishes $\hat{u}$ as an equilibrium reinsurance and investment strategy. Therefore, the identity $V(t,x,s_{1},m_{1},m_{2})=U(t,x,s_{1},m_{1})$ is established, thereby completing the proof.
\end{proof}

\section{Proof of Theorem \ref{theorem4.4}}
\begin{proof}
We aim to confirm that the candidate equilibrium reinsurance and investment strategy \eqref{eq80} is admissible and that the assumptions of Theorem \ref{theorem3.1} are met.

Setting $\widetilde{Z}(t)=\frac{1}{4\delta^{2}}S^{-2\delta}_{1}(t)$, we derive that $\mathrm{d}\widetilde{Z}(t)=\widetilde{\kappa}(\widetilde{\theta}-\widetilde{Z}(t))\mathrm{d}t+\widetilde{\sigma}\sqrt{\widetilde{Z}(t)}\mathrm{d}W_{2}(t)$, where $\widetilde{\kappa}=2\mu \delta$, $\widetilde{\theta}=\frac{2\delta+1}{8\mu \delta^{2}}\sigma^{2}$ and $\widetilde{\sigma}=-\sigma$. It is easy to see that $2\widetilde{\kappa}\widetilde{\theta}>\widetilde{\sigma}^{2}$, which implies that $\widetilde{Z}(t)$ is almost surely non-negative. Moreover, by the proof process of Lemma 1 in \cite{Li2012}, one has $\mathbb{E}_{t}\big[\sup\limits_{s\in[t,T]}|\widetilde{Z}(s)|^{p}\big]<\infty$ for $p\in[1,\infty)$.

Now, when the expression \eqref{eq80} is substituted into the SDDE \eqref{eq11}, it leads to
\begin{equation}\label{eq83}
   \left\{ \begin{aligned}
   \mathrm{d}X^{\hat{u}}(t)=&\left[AX^{\hat{u}}(t)+\frac{e^{-(A+\beta)(T-t)}}{\sum\limits_{i=1}^{n}\gamma_{i}p_{i}}\left(\left(\frac{\mu-r}{\sigma^{2}}-2\delta\sum\limits_{i=1}^{n}\widehat{g}_{6}^{\gamma_{i}}(t)p_{i}\right)
   \frac{\mu-r}{S^{2\delta}_{1}(t)}+\frac{a^{2} \eta^{2}_{2}}{b^{2}}\right)+BM_{1}^{\hat{u}}(t)+CM_{2}^{\hat{u}}(t)\right.\\
  &+a\eta\bigg]\mathrm{d}t+\frac{e^{-(A+\beta)(T-t)}}{\sum\limits_{i=1}^{n}\gamma_{i}p_{i}}\left[\frac{a \eta_{2}}{b}\mathrm{d}W_{1}(t)+\left(\frac{\mu-r}{\sigma^{2}}-2\delta\sum\limits_{i=1}^{n}\widehat{g}_{6}^{\gamma_{i}}(t)p_{i}\right)
   \frac{\sigma}{S^{\delta}_{1}(t)}\mathrm{d}W_{2}(t)\right],\; t\in[\tau,T],\\
   X^{\hat{u}}(t)=&\psi(t-\tau), \;t\in[\tau-h,\tau].
  \end{aligned}\right.
\end{equation}
Since $S^{-2\delta}_{1}(\cdot)\in\mathbb{S}^{1}_{\mathcal{F}}(0,T;\mathbb{R})$, it follows from Proposition 2.1 in \cite{Meng2025} that there exists a unique strong solution $X^{\hat{u}}(\cdot)\in\mathbb{S}^{p}_{\mathcal{F}}(0,T;\mathbb{R})$ with $p\geq2$ for the above SDDE. This result confirms that the condition (i) in Definition \ref{definition3.1} holds true. Furthermore, taking into account that $a=\lambda_{1}\mu_{1}$ and $\eta_{2}$ are positive constants and $\gamma>0$, we derive from the equation \eqref{eq80} the results that $\hat{q}(t)>0$ and $\{(\hat{q}(t),\hat{\pi}(t))\}_{t\in[0,T]}$ is $\mathcal{F}_{t}$-progressively measurable and continuous. By using the expression \eqref{eq80} and the fact that $S^{-2\delta}_{1}(\cdot)\in\mathbb{S}^{1}_{\mathcal{F}}(0,T;\mathbb{R})$, we have
\begin{align*}
\mathbb{E}\left[\int^{T}_{0}\left(\hat{q}^{2}(t)+\hat{\pi}^{2}(t)(X^{\hat{u}}(t))^{2}(S^{\delta}_{1}(t))^{2}\right)\mathrm{d}t\right]
\leq K\mathbb{E}\left[\sup_{t\in[0,T]}S^{-2\delta}_{1}(t)\right]+K
<\infty.
\end{align*}
Thus, the condition (ii) in Definition \ref{definition3.1} is satisfied. In addition, because $\widehat{g}_{1}^{\gamma_{i}}(t)$, $\widehat{g}_{2}^{\gamma_{i}}(t)$, $\widehat{g}_{5}^{\gamma_{i}}(t)$ and $\widehat{g}_{6}^{\gamma_{i}}(t)$ are deterministic and continuous on $[0,T]$, by the equation \eqref{eq24}, one has
\begin{align}\label{eq86}
&\int\left|(\varphi^{\gamma})^{-1}\left(\mathbb{E}_{t}\left[\varphi^{\gamma}(X^{\hat{u}}(T)+\beta M_{1}^{\hat{u}}(T))\right]\right)\right|\mathrm{d}\Gamma(\gamma)
=
\int\left|(\varphi^{\gamma})^{-1}\left(Y^{\gamma}(t,x,s_{1},m_{1})\right)\right|\mathrm{d}\Gamma(\gamma)\nonumber\\
=&\sum\limits_{i=1}^{n}\frac{1}{\gamma_{i}}\left|\widehat{g}_{1}^{\gamma_{i}}(t)(x+\beta m_{1})+\widehat{g}_{2}^{\gamma_{i}}(t)+\widehat{g}_{5}^{\gamma_{i}}(t)+\widehat{g}_{6}^{\gamma_{i}}(t)s^{-2\delta}_{1}\right|p_{i}
<\infty,
\end{align}
which shows that the condition (iii) in Definition \ref{definition3.1} is met.

Next, since $Y^{\gamma_{i}}(t,x,s_{1},m_{1})=-\frac{1}{\gamma_{i}}e^{\widehat{g}_{1}^{\gamma_{i}}(t)(x+\beta m_{1})+\widehat{g}_{2}^{\gamma_{i}}(t)+\widehat{g}_{5}^{\gamma_{i}}(t)+\widehat{g}_{6}^{\gamma_{i}}(t)s_{1}^{-2 \delta}}$, where $\gamma_{i} \in[\epsilon_{1},\infty)$, $\widehat{g}_{1}^{\gamma_{i}}(t)=-\gamma_{i} e^{(A+\beta)(T-t)}$, $\widehat{g}_{2}^{\gamma_{i}}(t)=\frac{\gamma_{i} a \eta}{A+\beta}\left[1-e^{(A+\beta)(T-t)}\right]+\frac{a^{2}\eta^{2}_{2}}{b^{2}}(0.5a^{2}_{3i}-a_{3i})(T-t)$, $\widehat{g}_{5}^{\gamma_{i}}(t)$ satisfies $\frac{\partial \widehat{g}_{5}^{\gamma_{i}}(t)}{\partial t}+\delta(2\delta+1)\sigma^{2}\widehat{g}_{6}^{\gamma_{i}}(t)=0$ and $\widehat{g}_{6}^{\gamma_{i}}(t)$ meets the equation \eqref{eq79}, we can obtain that $Y^{\gamma_{i}}\in C^{1,2,2,1}(\mathcal{D})$. By using the condition $(A2)$ and equations \eqref{eq9} and \eqref{eq83}, we have
\begin{align*}
   \mathrm{d}\left[X^{\hat{u}}(t)+\beta M_{1}^{\hat{u}}(t)\right]=&\left[(A+\beta)(X^{\hat{u}}(t)+\beta M_{1}^{\hat{u}}(t))+
   X^{\hat{u}}(t)\hat{\pi}(t)(\mu-r)+a\eta+a\eta_{2}\hat{q}(t)\right]\mathrm{d}t\\
   &+b\hat{q}(t)\mathrm{d}W_{1}(t)+\hat{\pi}(t)X^{\hat{u}}(t)\sigma S^{\delta}_{1}(t)\mathrm{d}W_{2}(t)
\end{align*}
for $t\in[\tau,T]$. Then, one has
\begin{align*}
\left|Y^{\gamma_{i}}(t,X^{\hat{u}}(t),S_{1}(t),M_{1}^{\hat{u}}(t))\right|^{4}=&\left|\frac{1}{\gamma_{i}^{4}}e^{4\widehat{g}_{1}^{\gamma_{i}}(t)\left(X^{\hat{u}}(t)+\beta M_{1}^{\hat{u}}(t)\right)+4\widehat{g}_{2}^{\gamma_{i}}(t)+4\widehat{g}_{5}^{\gamma_{i}}(t)+4\widehat{g}_{6}^{\gamma_{i}}(t)S^{-2\delta}_{1}(t)}\right|\nonumber\\
\leq&Ke^{4\widehat{g}_{1}^{\gamma_{i}}(t)\left(X^{\hat{u}}(t)+\beta M_{1}^{\hat{u}}(t)\right)}\nonumber\\
=&Ke^{4\widehat{g}_{1}^{\gamma_{i}}(t)\left((x_{0}+\beta m_{10})e^{(A+\beta)t}+\int^{t}_{0}e^{(A+\beta)(t-s)}\left[X^{\hat{u}}(s)\hat{\pi}(s)(\mu-r)+a\eta+a\eta_{2}\hat{q}(s)\right]\mathrm{d}s
   \right)}\nonumber\\
&\times e^{4\widehat{g}_{1}^{\gamma_{i}}(t)\int^{t}_{0}e^{(A+\beta)(t-s)}\left[b\hat{q}(s)\mathrm{d}W_{1}(s)+\hat{\pi}(s)X^{\hat{u}}(s)\sigma S^{\delta}_{1}(s)\mathrm{d}W_{2}(s)\right]}\nonumber\\
   \leq&Ke^{
   -4\gamma_{i}\int^{t}_{0}\left[(\mu-r)\widetilde{\pi}(s)S^{-2\delta}_{1}(s)\mathrm{d}s+b\widetilde{q}\mathrm{d}W_{1}(s)+\widetilde{\pi}(s)\sigma S^{-\delta}_{1}(s)\mathrm{d}W_{2}(s)\right]},
\end{align*}
where the first inequality holds because $\widehat{g}_{2}^{\gamma_{i}}(t)$, $\widehat{g}_{5}^{\gamma_{i}}(t)$ and $\widehat{g}_{6}^{\gamma_{i}}(t)\leq 0$ are bounded and $S^{-2\delta}_{1}(t)$ is almost surely non-negative, $m_{10}=\frac{x_{0}(1-e^{-\alpha h})}{\alpha}$ and
\begin{equation}\label{eq84}
  \widetilde{q}=\frac{a \eta_{2}}{b^{2} \sum\limits_{i=1}^{n}\gamma_{i}p_{i}},\;\;
  \widetilde{\pi}(s)=\frac{1}{\sum\limits_{i=1}^{n}\gamma_{i}p_{i}}(\frac{\mu-r}{\sigma^{2}}-2\delta\sum\limits_{i=1}^{n}\widehat{g}_{6}^{\gamma_{i}}(s)p_{i}).
\end{equation}

Letting $\widetilde{M}_{1}(t)=e^{-4\gamma_{i}\int^{t}_{0}b\widetilde{q}\mathrm{d}W_{1}(s)}$ and noting that $\widetilde{q}$ is a constant, we can deduce that
$$
\widetilde{M}_{1}(T)=\underbrace{e^{\int^{T}_{0}8\gamma_{i}^{2}b^{2}\widetilde{q}^{2}\mathrm{d}s}}_{constant}\times
\underbrace{e^{-\int^{T}_{0}8\gamma_{i}^{2}b^{2}\widetilde{q}^{2}\mathrm{d}s-4\gamma_{i}\int^{T}_{0}b\widetilde{q}\mathrm{d}W_{1}(s)}}_{martingale}.
$$
Therefore, $\mathbb{E}\left[\widetilde{M}_{1}(T)\right]<\infty$. Then, we seek to derive an estimate for
$e^{-4\gamma_{i}\int^{T}_{0}\left[(\mu-r)\widetilde{\pi}(s)S^{-2\delta}_{1}(s)\mathrm{d}s+\widetilde{\pi}(s)\sigma S^{-\delta}_{1}(s)\mathrm{d}W_{2}(s)\right]}$.
Since $\widetilde{Z}(t)=\frac{1}{4\delta^{2}}S^{-2\delta}_{1}(t)$, we note that
\begin{align*}
e^{-4\gamma_{i}\int^{t}_{0}\left[(\mu-r)\widetilde{\pi}(s)S^{-2\delta}_{1}(s)\mathrm{d}s+\widetilde{\pi}(s)\sigma S^{-\delta}_{1}(s)\mathrm{d}W_{2}(s)\right]}
=&\underbrace{e^{\int^{t}_{0}\left(-16\gamma_{i}(\mu-r)\delta^{2}\widetilde{\pi}(s)+64\gamma_{i}^{2}\delta^{2}\sigma^{2}\widetilde{\pi}^{2}(s)\right)\widetilde{Z}(s)\mathrm{d}s}}_{\widetilde{M}_{2}(t)}\\
&\times\underbrace{e^{-\int^{t}_{0}64\gamma_{i}^{2}\delta^{2}\sigma^{2}\widetilde{\pi}^{2}(s)\widetilde{Z}(s)\mathrm{d}s
  -\int^{t}_{0}8\gamma_{i}\delta\sigma\widetilde{\pi}(s)\sqrt{\widetilde{Z}(s)}\mathrm{d}W_{2}(s)}}_{\widetilde{M}_{3}(t)}.
\end{align*}
For the term $\widetilde{M}_{3}(t)$, because $\gamma_{i}\delta\sigma\widetilde{\pi}(s)$ is deterministic and bounded on $[0,T]$ and $\widetilde{Z}(s)$ satisfies the Cox-Ingersoll-Ross Model, one can use Lemma 4.3 in \cite{ZengX2013} to derive that for $t\in[0,T]$,
\begin{equation*}
  \mathbb{E}[(\widetilde{M}_{3}(t))^{2}]=\mathbb{E}\left[e^{-\int^{t}_{0}128\gamma_{i}^{2}\delta^{2}\sigma^{2}\widetilde{\pi}^{2}(s)\widetilde{Z}(s)\mathrm{d}s
  -\int^{t}_{0}16\gamma_{i}\delta\sigma\widetilde{\pi}(s)\sqrt{\widetilde{Z}(s)}\mathrm{d}W_{2}(s)}\right]<\infty.
\end{equation*}
For the term $\widetilde{M}_{2}(t)$, we know that
$
  \mathbb{E}[(\widetilde{M}_{2}(T))^{2}]=\mathbb{E}\left[e^{\int^{T}_{0}\left(-32\gamma_{i}(\mu-r)\delta^{2}\widetilde{\pi}(s)
  +128\gamma_{i}^{2}\delta^{2}\sigma^{2}\widetilde{\pi}^{2}(s)\right)\widetilde{Z}(s)\mathrm{d}s}\right]
$. It follows from Theorem 5.1 in \cite{ZengX2013} that if the condition $-32\gamma_{i}(\mu-r)\delta^{2}\widetilde{\pi}(s)
  +128\gamma_{i}^{2}\delta^{2}\sigma^{2}\widetilde{\pi}^{2}(s)\leq\frac{\kappa^{2}}{2\sigma^{2}}$ is satisfied for $s\in[0,T]$, then $\mathbb{E}[(\widetilde{M}_{2}(T))^{2}]<\infty$. Thus, based on the above analysis, by using Burkh\"{o}lder-Davis-Gundy inequality (see, for example, Theorem 1.1.6 in \cite{Pham2009}) and Cauchy-Schwarz inequality, one can obtain that
\begin{align}\label{eq85}
  \mathbb{E}\left[\sup_{t\in[0,T]}\left|Y^{\gamma_{i}}(t,X^{\hat{u}}(t),S_{1}(t),M_{1}^{\hat{u}}(t))\right|^{4}\right]&\leq K\mathbb{E}\left[\left|Y^{\gamma_{i}}(T,X^{\hat{u}}(T),S_{1}(T),M_{1}^{\hat{u}}(T))\right|^{4}\right]\nonumber\\
  &\leq Ke^{-4\gamma_{i}\int^{T}_{0}\left[(\mu-r)\widetilde{\pi}(s)S^{-2\delta}_{1}(s)\mathrm{d}s+b\widetilde{q}\mathrm{d}W_{1}(s)+\widetilde{\pi}(s)\sigma S^{-\delta}_{1}(s)\mathrm{d}W_{2}(s)\right]}\\
  &\leq K \mathbb{E}\left[\widetilde{M}_{2}(T)\widetilde{M}_{3}(T)\right] \leq K \left(\mathbb{E}\left[(\widetilde{M}_{2}(T))^{2}\right] \mathbb{E}\left[(\widetilde{M}_{3}(T))^{2}\right]\right)^{\frac{1}{2}}<\infty,\nonumber
\end{align}
where the independence of $W_{1}$ and $W_{2}$ is applied in the third inequality. Then, similar to the proof of Lemma \ref{lemma3.2}, one has $Y^{\gamma_{i}}(\cdot,X^{\hat{u}}(\cdot),S_{1}(\cdot),M_{1}^{\hat{u}}(\cdot))\in\mathbb{S}^{2}_{\mathcal{F}}(0,T;\mathbb{R})$.

Subsequently, the assumption that $Y^{\gamma_{i}}(\cdot,X^{\hat{u}}(\cdot),S_{1}(\cdot),M_{1}^{\hat{u}}(\cdot))$ satisfies the condition $(A1)$ will be verified. In view of the expression of $Y^{\gamma_{i}}$, we have
\begin{align*}
&\mathbb{E}\left[\int^{T}_{0}\left|Y^{\gamma_{i}}_{t}(t,X^{\hat{u}}(t),S_{1}(t),M_{1}^{\hat{u}}(t))\right|\mathrm{d}t\right]\\
=&\mathbb{E}\left[\int^{T}_{0}\left|Y^{\gamma_{i}}(t,X^{\hat{u}}(t),S_{1}(t),M_{1}^{\hat{u}}(t))\left[\frac{\partial \widehat{g}_{1}^{\gamma_{i}}(t)}{\partial t}(X^{\hat{u}}(t)+\beta M_{1}^{\hat{u}}(t))+\frac{\partial \widehat{g}_{2}^{\gamma_{i}}(t)}{\partial t}+\frac{\partial \widehat{g}_{5}^{\gamma_{i}}(t)}{\partial t}+\frac{\partial \widehat{g}_{6}^{\gamma_{i}}(t)}{\partial t}S^{-2\delta}_{1}(t)\right]\right|\mathrm{d}t\right]\\
\leq &K\mathbb{E}\left[\sup_{t\in[0,T]}\left|Y^{\gamma_{i}}(t,X^{\hat{u}}(t),S_{1}(t),M_{1}^{\hat{u}}(t))\right|\times \left(\sup_{t\in[0,T]}\left|X^{\hat{u}}(t)\right|+\sup_{t\in[0,T]}\left|M_{1}^{\hat{u}}(t)\right|+\sup_{t\in[0,T]}\left|S^{-2\delta}_{1}(t)\right|+K\right)\right],
\end{align*}
where we use the fact that $\frac{\partial \widehat{g}_{1}^{\gamma_{i}}(t)}{\partial t}$, $\frac{\partial \widehat{g}_{2}^{\gamma_{i}}(t)}{\partial t}$, $\frac{\partial \widehat{g}_{5}^{\gamma_{i}}(t)}{\partial t}$ and $\frac{\partial \widehat{g}_{6}^{\gamma_{i}}(t)}{\partial t}$ are deterministic and bounded in the inequality. According to $Y^{\gamma_{i}}(\cdot,X^{\hat{u}}(\cdot),S_{1}(\cdot),M_{1}^{\hat{u}}(\cdot))\in\mathbb{S}^{2}_{\mathcal{F}}(0,T;\mathbb{R})$ and $X^{\hat{u}}(\cdot)\in\mathbb{S}^{p}_{\mathcal{F}}(0,T;\mathbb{R})$ with $p\geq2$, we can deduce
\begin{align*}
&\mathbb{E}\left[\sup_{t\in[0,T]}\left|Y^{\gamma_{i}}(t,X^{\hat{u}}(t),S_{1}(t),M_{1}^{\hat{u}}(t))\right|\times \sup_{t\in[0,T]}\left|X^{\hat{u}}(t)\right|\right]\\
\leq&
\left(\mathbb{E}\left[\left(\sup_{t\in[0,T]}\left|Y^{\gamma_{i}}(t,X^{\hat{u}}(t),S_{1}(t),M_{1}^{\hat{u}}(t))\right|\right)^{2}\right]\times
\mathbb{E}\left[\left(\sup_{t\in[0,T]}\left|X^{\hat{u}}(t)\right|\right)^{2}\right]\right)^{\frac{1}{2}}\\
\leq&
\left(\mathbb{E}\left[\sup_{t\in[0,T]}\left|Y^{\gamma_{i}}(t,X^{\hat{u}}(t),S_{1}(t),M_{1}^{\hat{u}}(t))\right|^{2}\right]\times
\mathbb{E}\left[\sup_{t\in[0,T]}\left|X^{\hat{u}}(t)\right|^{2}\right]\right)^{\frac{1}{2}}
<\infty.
\end{align*}
Because
$
|M_{1}^{\hat{u}}(t)|=\left|\int^{0}_{-h}e^{\alpha s}X^{\hat{u}}(t+s)\mathrm{d}s\right|
\leq K\max\limits_{s\in[-h,0]}|X^{\hat{u}}(t+s)|
\leq K \|X^{\hat{u}}_{t}\|
$, one has $$
\mathbb{E}\left[\sup_{t\in[0,T]}\left|M_{1}^{\hat{u}}(t)\right|^{2}\right]\leq K\mathbb{E}\left[\sup_{t\in[0,T]}\|X^{\hat{u}}_{t}\|^{2}\right]
\leq K\mathbb{E}\left[\sup_{t\in[0,T]}\left|X^{\hat{u}}(t)\right|^{2}+x^{2}_{0}\right]
<\infty,
$$
which implies $M_{1}^{\hat{u}}(\cdot)\in\mathbb{S}^{2}_{\mathcal{F}}(0,T;\mathbb{R})$. Moreover, since $S^{-2\delta}_{1}(\cdot)\in\mathbb{S}^{2}_{\mathcal{F}}(0,T;\mathbb{R})$, one can similarly derive
\begin{align*}
&\mathbb{E}\left[\sup_{t\in[0,T]}\left|Y^{\gamma_{i}}(t,X^{\hat{u}}(t),S_{1}(t),M_{1}^{\hat{u}}(t))\right|\times \sup_{t\in[0,T]}\left|M_{1}^{\hat{u}}(t)\right|\right]<\infty,\\
&\mathbb{E}\left[\sup_{t\in[0,T]}\left|Y^{\gamma_{i}}(t,X^{\hat{u}}(t),S_{1}(t),M_{1}^{\hat{u}}(t))\right|\times \sup_{t\in[0,T]}\left|S^{-2\delta}_{1}(t)\right|\right]<\infty.
\end{align*}
Similar to the proof of Lemma \ref{lemma3.2}, one can also show $\mathbb{E}\left[\sup\limits_{t\in[0,T]}\left|Y^{\gamma_{i}}(t,X^{\hat{u}}(t),S_{1}(t),M_{1}^{\hat{u}}(t))\right|\right]
<\infty$. Thus, $\mathbb{E}\left[\int^{T}_{0}\left|Y^{\gamma_{i}}_{t}(t,X^{\hat{u}}(t),S_{1}(t),M_{1}^{\hat{u}}(t))\right|\mathrm{d}t\right]<\infty$, which means $Y^{\gamma_{i}}_{t}(t,X^{\hat{u}}(t),S_{1}(t),M_{1}^{\hat{u}}(t))\in\mathbb{L}^{1}_{\mathcal{F}}(0,T;\mathbb{R})$. Proceeding analogously, we can fully verify that
$\mathcal{A}^{u}Y^{\gamma_{i}}(t,X^{\hat{u}}(t),S_{1}(t),M_{1}^{\hat{u}}(t))\in\mathbb{L}^{1}_{\mathcal{F}}(0,T;\mathbb{R})$. In addition,
\begin{align*}
&\mathbb{E}\left[\int^{T}_{0}\left|\hat{\pi}(t)X^{\hat{u}}(t)\sigma S^{\delta}_{1}(t)Y^{\gamma_{i}}_{x}(t,X^{\hat{u}}(t),S_{1}(t),M_{1}^{\hat{u}}(t))\right|^{2}\mathrm{d}t\right]\\
\leq& K\mathbb{E}\left[\sup_{t\in[0,T]}\left|Y^{\gamma_{i}}(t,X^{\hat{u}}(t),S_{1}(t),M_{1}^{\hat{u}}(t))\right|^{2}\times \sup_{t\in[0,T]}\left|S^{-2\delta}_{1}(t)\right|\right]\\
\leq&K
\left(\mathbb{E}\left[\left(\sup_{t\in[0,T]}\left|Y^{\gamma_{i}}(t,X^{\hat{u}}(t),S_{1}(t),M_{1}^{\hat{u}}(t))\right|^{2}\right)^{2}\right]\times
\mathbb{E}\left[\left(\sup_{t\in[0,T]}\left|S^{-2\delta}_{1}(t)\right|\right)^{2}\right]\right)^{\frac{1}{2}}\\
\leq&K
\left(\mathbb{E}\left[\sup_{t\in[0,T]}\left|Y^{\gamma_{i}}(t,X^{\hat{u}}(t),S_{1}(t),M_{1}^{\hat{u}}(t))\right|^{4}\right]\times
\mathbb{E}\left[\sup_{t\in[0,T]}\left|S^{-2\delta}_{1}(t)\right|^{2}\right]\right)^{\frac{1}{2}}
<\infty,
\end{align*}
where the last inequality holds by virtue of the equation \eqref{eq85} and the fact that $S^{-2\delta}_{1}(\cdot)\in\mathbb{S}^{2}_{\mathcal{F}}(0,T;\mathbb{R})$. Thus, $\hat{\pi}(t)X^{\hat{u}}(t)\sigma S^{\delta}_{1}(t)Y^{\gamma_{i}}_{x}(t,X^{\hat{u}}(t),S_{1}(t),M_{1}^{\hat{u}}(t))\in\mathbb{L}^{2}_{\mathcal{F}}(0,T;\mathbb{R})$. Using a similar argument, we can demonstrate that $b\hat{q}(t)Y^{\gamma_{i}}_{x}(t,X^{\hat{u}}(t),S_{1}(t),M_{1}^{\hat{u}}(t)),\;\sigma S^{\delta+1}_{1}(t)Y^{\gamma_{i}}_{s_{1}}(t,X^{\hat{u}}(t),S_{1}(t),M_{1}^{\hat{u}}(t))\in\mathbb{L}^{2}_{\mathcal{F}}(0,T;\mathbb{R})$.

Finally, the assumption that $U(\cdot,X^{\hat{u}}(\cdot),S_{1}(\cdot),M_{1}^{\hat{u}}(\cdot))$ and $H(\cdot,X^{\hat{u}}(\cdot),S_{1}(\cdot),M_{1}^{\hat{u}}(\cdot))$ satisfy the condition $(A1)$ will be demonstrated. Due to the equations \eqref{eq24} and \eqref{eq86}, we can deduce that $U(t,x,s_{1},m_{1})=H(t,x,s_{1},m_{1})=-\sum\limits_{i=1}^{n}\frac{1}{\gamma_{i}}\left[\widehat{g}_{1}^{\gamma_{i}}(t)(x+\beta m_{1})+\widehat{g}_{2}^{\gamma_{i}}(t)+\widehat{g}_{5}^{\gamma_{i}}(t)+\widehat{g}_{6}^{\gamma_{i}}(t)s^{-2\delta}_{1}\right]p_{i}$, where $\widehat{g}_{1}^{\gamma_{i}}(t)$, $\widehat{g}_{2}^{\gamma_{i}}(t)$, $\widehat{g}_{5}^{\gamma_{i}}(t)$ and $\widehat{g}_{6}^{\gamma_{i}}(t)$ are deterministic and continuous functions on $[0,T]$, we know $U,\;H\in C^{1,2,2,1}(\mathcal{D})$. Furthermore, by using the fact $\frac{\partial \widehat{g}_{1}^{\gamma_{i}}(t)}{\partial t}$, $\frac{\partial \widehat{g}_{2}^{\gamma_{i}}(t)}{\partial t}$, $\frac{\partial \widehat{g}_{5}^{\gamma_{i}}(t)}{\partial t}$ and $\frac{\partial \widehat{g}_{6}^{\gamma_{i}}(t)}{\partial t}$ are deterministic and bounded, one can derive that
\begin{align*}
&\mathbb{E}\left[\int^{T}_{0}\left|U_{t}(t,X^{\hat{u}}(t),S_{1}(t),M_{1}^{\hat{u}}(t))\right|\mathrm{d}t\right]\\
=&\mathbb{E}\left[\int^{T}_{0}
\left|-\sum\limits_{i=1}^{n}\frac{1}{\gamma_{i}}\left[\frac{\partial \widehat{g}_{1}^{\gamma_{i}}(t)}{\partial t}(X^{\hat{u}}(t)+\beta M_{1}^{\hat{u}}(t))+\frac{\partial \widehat{g}_{2}^{\gamma_{i}}(t)}{\partial t}+\frac{\partial \widehat{g}_{5}^{\gamma_{i}}(t)}{\partial t}+\frac{\partial \widehat{g}_{6}^{\gamma_{i}}(t)}{\partial t}S^{-2\delta}_{1}(t)\right]\right|\mathrm{d}t\right]\\
\leq& K\mathbb{E}\left[\sup_{t\in[0,T]}\left|X^{\hat{u}}(t)\right|+\sup_{t\in[0,T]}\left|M_{1}^{\hat{u}}(t)\right|+\sup_{t\in[0,T]}\left|S^{-2\delta}_{1}(t)\right|\right]+K<\infty,
\end{align*}
which shows $U_{t}(t,X^{\hat{u}}(t),S_{1}(t),M_{1}^{\hat{u}}(t))\in\mathbb{L}^{1}_{\mathcal{F}}(0,T;\mathbb{R})$. Analogously, we are able to carry out a full verification that $\mathcal{A}^{u}U(t,X^{\hat{u}}(t),S_{1}(t),M_{1}^{\hat{u}}(t))\in\mathbb{L}^{1}_{\mathcal{F}}(0,T;\mathbb{R})$. Moreover, because $S^{-2\delta}_{1}(\cdot)\in\mathbb{S}^{1}_{\mathcal{F}}(0,T;\mathbb{R})$, one has
\begin{align*}
&\mathbb{E}\left[\int^{T}_{0}\left|\hat{\pi}(t)X^{\hat{u}}(t)\sigma S^{\delta}_{1}(t)U_{x}(t,X^{\hat{u}}(t),S_{1}(t),M_{1}^{\hat{u}}(t))\right|^{2}\mathrm{d}t\right]\\ \leq& K\mathbb{E}\left[\sup_{t\in[0,T]}\left|U_{x}(t,X^{\hat{u}}(t),S_{1}(t),M_{1}^{\hat{u}}(t))\right|^{2}\times \sup_{t\in[0,T]}\left|S^{-2\delta}_{1}(t)\right|\right]\\
\leq&K\mathbb{E}\left[\sup_{t\in[0,T]}\left|\sum\limits_{i=1}^{n}\frac{1}{\gamma_{i}}\widehat{g}_{1}^{\gamma_{i}}(t)p_{i}\right|^{2}\times \sup_{t\in[0,T]}\left|S^{-2\delta}_{1}(t)\right|\right]
\leq K\mathbb{E}\left[ \sup_{t\in[0,T]}\left|S^{-2\delta}_{1}(t)\right|\right]
<\infty,
\end{align*}
which indicates that $\hat{\pi}(t)X^{\hat{u}}(t)\sigma S^{\delta}_{1}(t)U_{x}(t,X^{\hat{u}}(t),S_{1}(t),M_{1}^{\hat{u}}(t))\in\mathbb{L}^{2}_{\mathcal{F}}(0,T;\mathbb{R})$. Likewise, one can verify that $b\hat{q}(t)U_{x}(t,X^{\hat{u}}(t),S_{1}(t),M_{1}^{\hat{u}}(t)),\;\sigma S^{\delta+1}_{1}(t)U_{s_{1}}(t,X^{\hat{u}}(t),S_{1}(t),M_{1}^{\hat{u}}(t))\in\mathbb{L}^{2}_{\mathcal{F}}(0,T;\mathbb{R})$. Hence, $U(\cdot,X^{\hat{u}}(\cdot),S_{1}(\cdot),M_{1}^{\hat{u}}(\cdot))$ satisfies the condition $(A1)$. Since $U=H$, $H(\cdot,X^{\hat{u}}(\cdot),S_{1}(\cdot),M_{1}^{\hat{u}}(\cdot))$ also meets the condition $(A1)$.
\end{proof}

\section{Proof of Proposition \ref{proposition4.1}}
\begin{proof}
Since $A=r-B-C$, $C=\beta e^{-\alpha h}$ and $B e^{-\alpha h}=(\alpha+A+\beta)C$, we can rewrite $\hat{q}(t)$ as
$
\hat{q}(t)=\frac{a \eta_{2}}{b^{2}\mathbb{E}[\gamma]}\exp\left\{\left[-r+\frac{\beta}{1+\beta}\left(r+\alpha+e^{-\alpha h}-1\right)\right](T-t)\right\}.
$
Therefore, we have $\frac{\partial \hat{q}(t)}{\partial \alpha}=\frac{\beta\hat{q}(t)}{1+\beta}\left(1-he^{-\alpha h}\right)(T-t)$, $\frac{\partial \hat{q}(t)}{\partial \beta}=\frac{\hat{q}(t)}{(1+\beta)^{2}}\left(r+\alpha+e^{-\alpha h}-1\right)(T-t)$ and $\frac{\partial \hat{q}(t)}{\partial h}=-\frac{\alpha\beta\hat{q}(t)}{1+\beta}e^{-\alpha h}(T-t)$. Because $\hat{q}(t)>0$, by the assumption $r+\alpha<1$, we can get equations \eqref{eq70} and \eqref{eq71} and $\frac{\partial \hat{q}(t)}{\partial h}<0$. Similarly, we can obtain the sensitivity analysis on the effects of $\alpha$, $\beta$ and $h$ on $\hat{\pi}(t)x$.
\end{proof}

\section{Proof of Theorem \ref{theorem4.1}}
\begin{proof}
We need to verify that the candidate equilibrium reinsurance and investment strategy \eqref{eq47} is admissible and the assumptions in Theorem \ref{theorem3.1} are satisfied.

Substituting the expression \eqref{eq47} into the SDDE \eqref{eq11} arrives at
\begin{equation}\label{eq86}
   \left\{ \begin{aligned}
   \mathrm{d}X^{\hat{u}}(t)=&\left[AX^{\hat{u}}(t)+\frac{(\mu-r)^{2}}{\sigma^{2}\mathbb{E}[\gamma]}e^{-(A+\beta)(T-t)}+BM_{1}^{\hat{u}}(t)+CM_{2}^{\hat{u}}(t)+a\eta+\frac{a^{2} \eta^{2}_{2}}{b^{2}\mathbb{E}[\gamma]}e^{-(A+\beta)(T-t)}\right]\mathrm{d}t\\
  &+\frac{a \eta_{2}}{b\mathbb{E}[\gamma]}e^{-(A+\beta)(T-t)}\mathrm{d}W_{1}(t)+\frac{\mu-r}{\sigma\mathbb{E}[\gamma]}e^{-(A+\beta)(T-t)}\mathrm{d}W_{2}(t),\; t\in[\tau,T],\\
   X^{\hat{u}}(t)=&\psi(t-\tau), \;t\in[\tau-h,\tau].
  \end{aligned}\right.
\end{equation}
Due to Proposition 2.1 in \cite{Meng2025}, the above SDDE possesses a unique strong solution $X^{\hat{u}}(\cdot)\in\mathbb{S}^{p}_{\mathcal{F}}(0,T;\mathbb{R})$ for $p\geq2$, which shows that the condition (i) in Definition \ref{definition3.1} is met. Since $a=\lambda_{1}\mu_{1}$ and $\eta_{2}$ are positive constants and $\gamma>0$, it follows from the equation \eqref{eq47} that $\hat{q}(t)>0$ and $\{(\hat{q}(t),\hat{\pi}(t))\}_{t\in[0,T]}$ is $\mathcal{F}_{t}$-progressively measurable and continuous. Moreover, it is easy to know that $\hat{q}(t)$ and $\hat{\pi}(t)X^{\hat{u}}(t)$ is deterministic and continuous on $[0,T]$. Therefore, one can obtain that $\mathbb{E}\left[\int^{T}_{0}\left(\hat{q}^{2}(t)+\hat{\pi}^{2}(t)(X^{\hat{u}}(t))^{2}\right)\mathrm{d}t\right]<\infty$, which implies the condition (ii) in Definition \ref{definition3.1} holds. In addition, by the equation \eqref{eq24}, one has
\begin{align*}
&\int\left|(\varphi^{\gamma})^{-1}\left(\mathbb{E}_{t}\left[\varphi^{\gamma}(X^{\hat{u}}(T)+\beta M_{1}^{\hat{u}}(T))\right]\right)\right|\mathrm{d}\Gamma(\gamma)
=\int\left|(\varphi^{\gamma})^{-1}\left(Y^{\gamma}(t,x,m_{1})\right)\right|\mathrm{d}\Gamma(\gamma)\\
=&\int\left|\frac{1}{\gamma}\left[g_{1}^{\gamma}(t)(x+\beta m_{1})+g_{2}^{\gamma}(t)\right]\right|\mathrm{d}\Gamma(\gamma)
\leq\left|x+\beta m_{1}\right|e^{(A+\beta)(T-t)}+\left|\frac{a \eta}{A+\beta}\left[1-e^{(A+\beta)(T-t)}\right]\right|\\
&\mbox{} +1.5\left[\frac{(\mu-r)^{2}}{\sigma^{2}\mathbb{E}[\gamma]}+\frac{a^{2} \eta^{2}_{2}}{b^{2}\mathbb{E}[\gamma]}\right](T-t)
<\infty,
\end{align*}
which means that the condition (iii) in Definition \ref{definition3.1} is satisfied.

Since $Y^{\gamma}(t,x,m_{1})=-\frac{1}{\gamma}e^{g_{1}^{\gamma}(t)(x+\beta m_{1})+g_{2}^{\gamma}(t)}$, where $\gamma \in[\epsilon_{1},\infty)$, $g_{1}^{\gamma}(t)=-\gamma e^{(A+\beta)(T-t)}$ and $g_{2}^{\gamma}(t)$ is given by the equation \eqref{eq48}, we can derive that $Y^{\gamma}\in C^{1,2,1}(\mathcal{\widetilde{D}})$. In view of equations \eqref{eq86} and \eqref{eq9}, we have
\begin{align*}
   \mathrm{d}\left[X^{\hat{u}}(t)+\beta M_{1}^{\hat{u}}(t)\right]=&\left[(A+\beta)(X^{\hat{u}}(t)+\beta M_{1}^{\hat{u}}(t))+\frac{(\mu-r)^{2}}{\sigma^{2}\mathbb{E}[\gamma]}e^{-(A+\beta)(T-t)}+a\eta+\frac{a^{2} \eta^{2}_{2}}{b^{2}\mathbb{E}[\gamma]}e^{-(A+\beta)(T-t)}\right]\mathrm{d}t\\
  &+\frac{a \eta_{2}}{b\mathbb{E}[\gamma]}e^{-(A+\beta)(T-t)}\mathrm{d}W_{1}(t)+\frac{\mu-r}{\sigma\mathbb{E}[\gamma]}e^{-(A+\beta)(T-t)}\mathrm{d}W_{2}(t).
\end{align*}
Then, based on the above SDDE, we can obtain that
\begin{align*}
\left|Y^{\gamma}(t,X^{\hat{u}}(t),M_{1}^{\hat{u}}(t))\right|^{4}=&\left|\frac{1}{\gamma^{4}}e^{4g_{1}^{\gamma}(t)\left(X^{\hat{u}}(t)+\beta M_{1}^{\hat{u}}(t)\right)+4g_{2}^{\gamma}(t)}\right|\nonumber\\
\leq&Ke^{4g_{1}^{\gamma}(t)\left(X^{\hat{u}}(t)+\beta M_{1}^{\hat{u}}(t)\right)}\nonumber\\
=&Ke^{4g_{1}^{\gamma}(t)\left((x_{0}+\beta m_{10})e^{(A+\beta)t}+\int^{t}_{0}e^{(A+\beta)(t-s)}\left[X^{\hat{u}}(s)\hat{\pi}(s)(\mu-r)+a\eta+a\eta_{2}\hat{q}(s)\right]\mathrm{d}s
   \right)}\nonumber\\
&\times e^{4g_{1}^{\gamma}(t)\int^{t}_{0}e^{(A+\beta)(t-s)}\left[b\hat{q}(s)\mathrm{d}W_{1}(s)+\hat{\pi}(s)X^{\hat{u}}(s)\sigma \mathrm{d}W_{2}(s)\right]}\nonumber\\
   \leq&Ke^{
   -4\gamma\int^{t}_{0}\left[b\bar{q}\mathrm{d}W_{1}(s)+\bar{\pi}\sigma\mathrm{d}W_{2}(s)\right]},
\end{align*}
where we employ the boundedness of $g_{2}^{\gamma}(t)$, $X^{\hat{u}}(t)\hat{\pi}(t)$ and $\hat{q}(t)$, $\bar{q}=\frac{a \eta_{2}}{b^{2}\mathbb{E}[\gamma]}$ and $\bar{\pi}=\frac{\mu-r}{\sigma^{2}\mathbb{E}[\gamma]}$. Define $\widetilde{M}_{4}(t)=e^{-4\gamma\int^{t}_{0}\left[b\bar{q}\mathrm{d}W_{1}(s)+\bar{\pi}\sigma\mathrm{d}W_{2}(s)\right]}$. Because $\bar{q}$ and $\bar{\pi}$ are constants and $W_{1}$ and $W_{2}$ are independent, we find that
$$
\widetilde{M}_{4}(T)=\underbrace{e^{8\gamma^{2}\int^{T}_{0}\left(b^{2}\bar{q}^{2}+\bar{\pi}^{2}\sigma^{2}\right)\mathrm{d}s}}_{constant}\times
\underbrace{e^{-8\gamma^{2}\int^{T}_{0}\left(b^{2}\bar{q}^{2}+\bar{\pi}^{2}\sigma^{2}\right)\mathrm{d}s-4\gamma\int^{T}_{0}\left[b\bar{q}\mathrm{d}W_{1}(s)+\bar{\pi}\sigma\mathrm{d}W_{2}(s)\right]}}_{martingale}.
$$
Hence, $\mathbb{E}\left[\widetilde{M}_{4}(T)\right]<\infty$. By Burkh\"{o}lder-Davis-Gundy inequality, we know that
\begin{align}\label{eq87}
  \mathbb{E}\left[\sup_{t\in[0,T]}\left|Y^{\gamma}(t,X^{\hat{u}}(t),M_{1}^{\hat{u}}(t))\right|^{4}\right]&\leq K\mathbb{E}\left[\left|Y^{\gamma}(T,X^{\hat{u}}(T),M_{1}^{\hat{u}}(T))\right|^{4}\right]\leq K\mathbb{E}\left[\widetilde{M}_{4}(T)\right]<\infty.
\end{align}
Proceeding as in the proof of Lemma \ref{lemma3.2}, one arrives at $Y^{\gamma}(\cdot,X^{\hat{u}}(\cdot),M_{1}^{\hat{u}}(\cdot))\in\mathbb{S}^{2}_{\mathcal{F}}(0,T;\mathbb{R})$.

We now proceed to verify that $Y^{\gamma}(\cdot,X^{\hat{u}}(\cdot),M_{1}^{\hat{u}}(\cdot))$ satisfies the condition $(A1)$. From the expression of $Y^{\gamma}$, it follows that
\begin{align*}
&\mathbb{E}\left[\int^{T}_{0}\left|Y^{\gamma}_{t}(t,X^{\hat{u}}(t),M_{1}^{\hat{u}}(t))\right|\mathrm{d}t\right]\\
=&\mathbb{E}\left[\int^{T}_{0}\left|Y^{\gamma}(t,X^{\hat{u}}(t),M_{1}^{\hat{u}}(t))\left[\frac{\partial g_{1}^{\gamma}(t)}{\partial t}\left(X^{\hat{u}}(t)+\beta M_{1}^{\hat{u}}(t)\right)+\frac{\partial g_{2}^{\gamma}(t)}{\partial t}\right]\right|\mathrm{d}t\right]\\
\leq& K\mathbb{E}\left[\sup_{t\in[0,T]}\left|Y^{\gamma}(t,X^{\hat{u}}(t),M_{1}^{\hat{u}}(t))\right|\times \left(\sup_{t\in[0,T]}\left|X^{\hat{u}}(t)+\beta M_{1}^{\hat{u}}(t)\right|+K\right)\right],
\end{align*}
in which the inequality holds because $\frac{\partial g_{1}^{\gamma}(t)}{\partial t}$ and $\frac{\partial g_{2}^{\gamma}(t)}{\partial t}$ are deterministic and bounded. Since $Y^{\gamma}(\cdot,X^{\hat{u}}(\cdot),M_{1}^{\hat{u}}(\cdot))\in\mathbb{S}^{2}_{\mathcal{F}}(0,T;\mathbb{R})$ and $X^{\hat{u}}(\cdot)\in\mathbb{S}^{2}_{\mathcal{F}}(0,T;\mathbb{R})$, we have
\begin{align*}
&\mathbb{E}\left[\sup_{t\in[0,T]}\left|Y^{\gamma}(t,X^{\hat{u}}(t),M_{1}^{\hat{u}}(t))\right|\times \sup_{t\in[0,T]}\left|X^{\hat{u}}(t)\right|\right]\\
\leq&
\left(\mathbb{E}\left[\left(\sup_{t\in[0,T]}\left|Y^{\gamma}(t,X^{\hat{u}}(t),M_{1}^{\hat{u}}(t))\right|\right)^{2}\right]\times
\mathbb{E}\left[\left(\sup_{t\in[0,T]}\left|X^{\hat{u}}(t)\right|\right)^{2}\right]\right)^{\frac{1}{2}}\\
=&
\left(\mathbb{E}\left[\sup_{t\in[0,T]}\left|Y^{\gamma}(t,X^{\hat{u}}(t),M_{1}^{\hat{u}}(t))\right|^{2}\right]\times
\mathbb{E}\left[\sup_{t\in[0,T]}\left|X^{\hat{u}}(t)\right|^{2}\right]\right)^{\frac{1}{2}}<\infty.
\end{align*}
Because $X^{\hat{u}}(\cdot)\in\mathbb{S}^{2}_{\mathcal{F}}(0,T;\mathbb{R})$, one has $M_{1}^{\hat{u}}(\cdot)\in\mathbb{S}^{2}_{\mathcal{F}}(0,T;\mathbb{R})$ by the proof process of Theorem \ref{theorem4.4}. Similarly, we can also obtain
$$\mathbb{E}\left[\sup_{t\in[0,T]}\left|Y^{\gamma}(t,X^{\hat{u}}(t),M_{1}^{\hat{u}}(t))\right|\times \sup_{t\in[0,T]}\left|M_{1}^{\hat{u}}(t)\right|\right]
<\infty, \quad \mathbb{E}\left[\sup_{t\in[0,T]}\left|Y^{\gamma}(t,X^{\hat{u}}(t),M_{1}^{\hat{u}}(t))\right|\right]
<\infty.$$
Thus, the following holds $\mathbb{E}\left[\int^{T}_{0}\left|Y^{\gamma}_{t}(t,X^{\hat{u}}(t),M_{1}^{\hat{u}}(t))\right|\mathrm{d}t\right]<\infty$, which shows $Y^{\gamma}_{t}(t,X^{\hat{u}}(t),M_{1}^{\hat{u}}(t))\in\mathbb{L}^{1}_{\mathcal{F}}(0,T;\mathbb{R})$. Mirroring this procedure, $\left[(A+\hat{\pi}(t)(\mu-r))X^{\hat{u}}(t)+BM_{1}^{\hat{u}}(t)+CM_{2}^{\hat{u}}(t)+a\eta+a\eta_{2}\hat{q}(t)\right]Y^{\gamma}_{x}(t,X^{\hat{u}}(t),M_{1}^{\hat{u}}(t))$, $(b^{2}\hat{q}^{2}(t)+\hat{\pi}^{2}(t)(X^{\hat{u}}(t))^{2}\sigma^{2})Y^{\gamma}_{xx}(t,X^{\hat{u}}(t),M_{1}^{\hat{u}}(t))$, $\left(X^{\hat{u}}(t)-\alpha M_{1}^{\hat{u}}(t)-e^{-\alpha h}M_{2}^{\hat{u}}(t)\right)Y^{\gamma}_{m_{1}}(t,X^{\hat{u}}(t),M_{1}^{\hat{u}}(t))\in\mathbb{L}^{1}_{\mathcal{F}}(0,T;\mathbb{R})$ can be also verified. Moreover, given that $\hat{q}(t)$ and $g_{1}^{\gamma}(t)$ are both deterministic and bounded, it follows that
\begin{align*}
\mathbb{E}\left[\int^{T}_{0}\left|b\hat{q}(t)Y^{\gamma}_{x}(t,X^{\hat{u}}(t),M_{1}^{\hat{u}}(t))\right|^{2}\mathrm{d}t\right]\leq& K\mathbb{E}\left[\sup_{t\in[0,T]}\left|Y^{\gamma}(t,X^{\hat{u}}(t),M_{1}^{\hat{u}}(t))\right|^{2}\right]
<\infty,
\end{align*}
which means that $b\hat{q}(t)Y^{\gamma}_{x}(t,X^{\hat{u}}(t),M_{1}^{\hat{u}}(t))\in\mathbb{L}^{2}_{\mathcal{F}}(0,T;\mathbb{R})$. Analogously, $\hat{\pi}(t)X^{\hat{u}}(t)\sigma Y^{\gamma}_{x}(t,X^{\hat{u}}(t),M_{1}^{\hat{u}}(t))\in\mathbb{L}^{2}_{\mathcal{F}}(0,T;\mathbb{R})$. Overall, $Y^{\gamma}(t,X^{\hat{u}}(t),M_{1}^{\hat{u}}(t))$ satisfies the condition $(A1)$.

By equations \eqref{eq28} and \eqref{eq50} and $X^{\hat{u}}(\cdot),M_{1}^{\hat{u}}(\cdot)\in\mathbb{S}^{1}_{\mathcal{F}}(0,T;\mathbb{R})$, we have $U, H\in C^{1,2,1}(\mathcal{\widetilde{D}})$ and
\begin{align*}
\mathbb{E}\left[\int^{T}_{0}\left|U_{t}(t,X^{\hat{u}}(t),M_{1}^{\hat{u}}(t))\right|\mathrm{d}t\right]
\leq K\mathbb{E}\left[\sup_{t\in[0,T]}\left|X^{\hat{u}}(t)\right|+\beta\sup_{t\in[0,T]}\left|M_{1}^{\hat{u}}(t)\right|\right]+K
<\infty.
\end{align*}
Then, we know that $U_{t}(t,X^{\hat{u}}(t),M_{1}^{\hat{u}}(t))\in\mathbb{L}^{1}_{\mathcal{F}}(0,T;\mathbb{R})$. Likewise, one can draw the following conclusion $\left[(A+\hat{\pi}(t)(\mu-r))X^{\hat{u}}(t)+BM_{1}^{\hat{u}}(t)+CM_{2}^{\hat{u}}(t)+a\eta+a\eta_{2}\hat{q}(t)\right]U_{x}(t,X^{\hat{u}}(t),M_{1}^{\hat{u}}(t))$, $(b^{2}\hat{q}^{2}(t)+\hat{\pi}^{2}(t)(X^{\hat{u}}(t))^{2}\sigma^{2})$ $U_{xx}(t,X^{\hat{u}}(t),M_{1}^{\hat{u}}(t))$, $\left(X^{\hat{u}}(t)-\alpha M_{1}^{\hat{u}}(t)-e^{-\alpha h}M_{2}^{\hat{u}}(t)\right)U_{m_{1}}(t,X^{\hat{u}}(t),M_{1}^{\hat{u}}(t))\in\mathbb{L}^{1}_{\mathcal{F}}(0,T;\mathbb{R})$. By using the equation \eqref{eq50}, one has
\begin{align*}
\mathbb{E}\left[\int^{T}_{0}\left|b\hat{q}(t)U_{x}(t,X^{\hat{u}}(t),M_{1}^{\hat{u}}(t))\right|^{2}\mathrm{d}t\right]=\mathbb{E}\left[\int^{T}_{0}\left|b\hat{q}(t)e^{(A+\beta)(T-t)}\right|^{2}\mathrm{d}t\right]
<\infty,
\end{align*}
which shows $b\hat{q}(t)U_{x}(t,X^{\hat{u}}(t),M_{1}^{\hat{u}}(t))\in\mathbb{L}^{2}_{\mathcal{F}}(0,T;\mathbb{R})$. Following analogous step, one can demonstrate that $\hat{\pi}(t)X^{\hat{u}}(t)\sigma U_{x}(t,X^{\hat{u}}(t),M_{1}^{\hat{u}}(t))\in\mathbb{L}^{2}_{\mathcal{F}}(0,T;\mathbb{R})$. Therefore, $U(t,X^{\hat{u}}(t),M_{1}^{\hat{u}}(t))$ meets the condition $(A1)$. For $H=U$, $H(t,X^{\hat{u}}(t),M_{1}^{\hat{u}}(t))$ also fulfills the condition $(A1)$.
\end{proof}

\section{Proof of Theorem \ref{theorem4.2}}
\begin{proof}
We need to verify the admissibility of candidate strategy \eqref{eq66} for reinsurance and investment, as well as the satisfaction of Theorem \ref{theorem3.1}'s assumptions, to ensure the validity of the equilibrium strategy.

If there exists at least a local solution to $g^{\gamma_{i}}(t)$ in the equation \eqref{eq60} for $i=1,\cdot\cdot\cdot,n$, then it follows from $g^{\gamma_{i}}(T)=1$ that $g^{\gamma_{i}}(t)$ is positive and continuous on $[0,T]$. Hence, $\varpi(t,\Gamma)$ is also positive and continuous. Substituting the expression \eqref{eq66} into the SDDE \eqref{eq11} yields
\begin{equation}\label{eq67}
   \left\{ \begin{aligned}
   \mathrm{d}X^{\hat{u}}(t)=&\left[AX^{\hat{u}}(t)+BM_{1}^{\hat{u}}(t)+CM_{2}^{\hat{u}}(t)+\left(\frac{(\mu- r)^{2} }
{\sigma^{2}\varpi(t,\Gamma)}+\frac{a^{2} \eta^{2}_{2}}
{b^{2}\varpi(t,\Gamma)}
  \right)(X^{\hat{u}}(t)+\beta M_{1}^{\hat{u}}(t))\right]\mathrm{d}t\\
  &+\frac{a \eta_{2}}{b\varpi(t,\Gamma)}(X^{\hat{u}}(t)+\beta M_{1}^{\hat{u}}(t))\mathrm{d}W_{1}(t)+\frac{\mu-r}{\sigma\varpi(t,\Gamma)}(X^{\hat{u}}(t)+\beta M_{1}^{\hat{u}}(t))\mathrm{d}W_{2}(t),\; t\in[\tau,T],\\
   X^{\hat{u}}(t)=&\psi(t-\tau), \;t\in[\tau-h,\tau].
  \end{aligned}\right.
\end{equation}
By Proposition 2.1 in \cite{Meng2025}, one can obtain that the SDDE \eqref{eq67} admits a unique strong solution $X^{\hat{u}}(\cdot)\in\mathbb{S}^{p}_{\mathcal{F}}(0,T;\mathbb{R})$ for $p\geq2$. This indicates that the condition (i) in Definition \ref{definition3.1} holds. Moreover, by using equations \eqref{eq9} and \eqref{eq67}, one has
\begin{align}\label{eq68}
\mathrm{d}\left[X^{\hat{u}}(t)+\beta M_{1}^{\hat{u}}(t)\right]=&\left[(A+\beta)X^{\hat{u}}(t)+(B-\alpha\beta)M_{1}^{\hat{u}}(t)+(C-e^{-\alpha h}\beta)M_{2}^{\hat{u}}(t)\right]\mathrm{d}t \nonumber\\
&+\left(X^{\hat{u}}(t)+\beta M_{1}^{\hat{u}}(t)\right)\left[\widetilde{M}_{5}(t)\mathrm{d}t+\widetilde{M}_{6}(t)\mathrm{d}W_{1}(t)+\widetilde{M}_{7}(t)\mathrm{d}W_{2}(t)\right],
\end{align}
where $\widetilde{M}_{5}(t)=\frac{(\mu- r)^{2} }{\sigma^{2}\varpi(t,\Gamma)}+\frac{a^{2} \eta^{2}_{2}}{b^{2}\varpi(t,\Gamma)}$, $\widetilde{M}_{6}(t)=\frac{a \eta_{2}}{b\varpi(t,\Gamma)}$ and $\widetilde{M}_{7}(t)=\frac{\mu-r}{\sigma\varpi(t,\Gamma)}$. Applying the condition $(A2)$, the equation \eqref{eq68} can be rewritten as
\begin{align*}
\mathrm{d}\left[X^{\hat{u}}(t)+\beta M_{1}^{\hat{u}}(t)\right]=\left(X^{\hat{u}}(t)+\beta M_{1}^{\hat{u}}(t)\right)\left[\left(A+\beta+\widetilde{M}_{5}(t)\right)\mathrm{d}t+\widetilde{M}_{6}(t)\mathrm{d}W_{1}(t)+\widetilde{M}_{7}(t)\mathrm{d}W_{2}(t)\right].
\end{align*}
Combining the initial conditions of $X^{\hat{u}}(t)$ and $M_{1}^{\hat{u}}(t)$ at $t=0$, we have
\begin{align*}
X^{\hat{u}}(t)+\beta M_{1}^{\hat{u}}(t)=&\left(x_{0}+\frac{\beta x_{0}(1-e^{-\alpha h})}{\alpha}\right)\exp\left\{\int^{t}_{0}\left(A+\beta+\widetilde{M}_{5}(s)-\frac{(\widetilde{M}_{6}(s))^{2}+(\widetilde{M}_{7}(s))^{2}}{2}\right)\mathrm{d}s\right\}\\
&\times\exp\left\{\int^{t}_{0}\left(\widetilde{M}_{6}(s)\mathrm{d}W_{1}(s)+\widetilde{M}_{7}(s)\mathrm{d}W_{2}(s)\right)\right\},
\end{align*}
which shows that $X^{\hat{u}}(t)+\beta M_{1}^{\hat{u}}(t)>0$ for $t\in[0,T]$ since $x_{0}$, $\beta$, $\alpha$ and $h$ are positive. Thus, it follows from the equation \eqref{eq66} that $\hat{q}(t)>0$ and $\{(\hat{q}(t),\hat{\pi}(t))\}_{t\in[0,T]}$ is $\mathcal{F}_{t}$-progressively measurable and continuous. In addition, since $X^{\hat{u}}(\cdot)\in\mathbb{S}^{2}_{\mathcal{F}}(0,T;\mathbb{R})$, one can derive that $M_{1}^{\hat{u}}(\cdot)\in\mathbb{S}^{2}_{\mathcal{F}}(0,T;\mathbb{R})$ by the proof process of Theorem \ref{theorem4.4}. Therefore, one has
\begin{align*}
\mathbb{E}\left[\int^{T}_{0}\left(\hat{q}^{2}(t)+\hat{\pi}^{2}(t)(X^{\hat{u}}(t))^{2}\right)\mathrm{d}t\right]
\leq &\mathbb{E}\left[\int^{T}_{0}\left(\frac{(\mu- r)^{2} }{\sigma^{4}\varpi^{2}(t,\Gamma)}+\frac{a^{2} \eta^{2}_{2}}
{b^{4}\varpi^{2}(t,\Gamma)}\right)(X^{\hat{u}}(t)+\beta M_{1}^{\hat{u}}(t))^{2}\mathrm{d}t\right]\\
\leq &K\mathbb{E}\left[\sup_{t\in[0,T]}\left(\left|X^{\hat{u}}(t)\right|^{2}
+\beta^{2}\left|M_{1}^{\hat{u}}(t)\right|^{2}\right)\right]
<\infty.
\end{align*}
Hence, the condition (ii) in Definition \ref{definition3.1} is satisfied. Furthermore, by using the equation \eqref{eq24}, we have
\begin{align*}
&\int\left|(\varphi^{\gamma})^{-1}\left(\mathbb{E}_{t}\left[\varphi^{\gamma}(X^{\hat{u}}(T)+\beta M_{1}^{\hat{u}}(T))\right]\right)\right|\mathrm{d}\Gamma(\gamma)
\\=&
\int\left|(\varphi^{\gamma})^{-1}\left(Y^{\gamma}(t,x,m_{1})\right)\right|\mathrm{d}\Gamma(\gamma)
=\left(x+\beta m_{1}\right)\sum\limits_{i=1}^{n}(g^{\gamma_{i}}(t))^{\frac{\gamma_{i}}{1-\gamma_{i}}}p_{i}
<\infty,
\end{align*}
which implies that the condition (iii) in Definition \ref{definition3.1} is met.

On the other hand, by the proof process of Theorem \ref{theorem4.4}, we find that $|M_{1}^{\hat{u}}(t)|\leq K \|X^{\hat{u}}_{t}\|$. Clearly, $|X^{\hat{u}}(t)|\leq \|X^{\hat{u}}_{t}\|$. Because $Y^{\gamma_{i}}(t,x,m_{1})=\frac{1}{1-\gamma_{i}}(g^{\gamma_{i}}(t))^{\gamma_{i}}(x+\beta m_{1})^{1-\gamma_{i}}$, where $g^{\gamma_{i}}(t)$ is captured by the equation \eqref{eq60} and bounded on $[0,T]$, we can derive that $Y^{\gamma_{i}}\in C^{1,2,1}(\mathcal{\widetilde{D}})$ and
\begin{align*}
\mathbb{E}\left[\int^{T}_{0}\left|Y^{\gamma_{i}}_{t}(t,X^{\hat{u}}(t),M_{1}^{\hat{u}}(t))\right|\mathrm{d}t\right]&=\mathbb{E}\left[\int^{T}_{0}\left|\frac{\gamma_{i}}{1-\gamma_{i}}(g^{\gamma_{i}}(t))^{\gamma_{i}-1}
(X^{\hat{u}}(t)+\beta M_{1}^{\hat{u}}(t))^{1-\gamma_{i}}\frac{\partial g^{\gamma_{i}}(t)}{\partial t}\right|\mathrm{d}t\right]\\
&\leq K\mathbb{E}\left[\sup_{t\in[0,T]}\left(X^{\hat{u}}(t)+\beta M_{1}^{\hat{u}}(t)\right)^{1-\gamma_{i}}\right]
\leq K\mathbb{E}\left[\|X^{\hat{u}}_{t}\|^{1-\gamma_{i}}\right],
\end{align*}
where the inequality follows from the fact that $\frac{\partial g^{\gamma_{i}}(t)}{\partial t}$ is deterministic and bounded as well as $\gamma_{i}\in(0,\epsilon_{2}]$ with $0<\epsilon_{2}<1$ for $i=1,\cdot\cdot\cdot,n$. Then, since $X^{\hat{u}}(\cdot)\in\mathbb{S}^{2}_{\mathcal{F}}(0,T;\mathbb{R})$, we can further obtain that
\begin{align*}
\left(\mathbb{E}\left[\|X^{\hat{u}}_{t}\|^{1-\gamma_{i}}\right]\right)^{2}\leq &K\mathbb{E}\left[\|X^{\hat{u}}_{t}\|^{2(1-\gamma_{i})}\right]\leq K\mathbb{E}\left[\left(1+\|X^{\hat{u}}_{t}\|\right)^{2(1-\gamma_{i})}\right]
\leq K\mathbb{E}\left[\left(1+\|X^{\hat{u}}_{t}\|\right)^{2}\right]\\
\leq& K\mathbb{E}\left[1+\|X^{\hat{u}}_{t}\|^{2}\right]
\leq K\mathbb{E}\left[1+\sup_{t\in[0,T]}(X^{\hat{u}}(t))^{2}+x^{2}_{0}\right]
<\infty.
\end{align*}
Hence, $Y^{\gamma_{i}}_{t}(t,X^{\hat{u}}(t),M_{1}^{\hat{u}}(t))\in\mathbb{L}^{1}_{\mathcal{F}}(0,T;\mathbb{R})$. Additionally, letting $\widetilde{M}_{8}(t)=AX^{\hat{u}}(t)+BM_{1}^{\hat{u}}(t)+CM_{2}^{\hat{u}}(t)$ and $\widetilde{M}_{9}(t)=X^{\hat{u}}(t)-\alpha M_{1}^{\hat{u}}(t)-e^{-\alpha h}M_{2}^{\hat{u}}(t)$, by the condition $(A2)$ and applying a similar proof procedure as for $Y^{\gamma_{i}}_{t}(t,X^{\hat{u}}(t),M_{1}^{\hat{u}}(t))\in\mathbb{L}^{1}_{\mathcal{F}}(0,T;\mathbb{R})$, one can get that
\begin{align*}
&\mathbb{E}\left[\int^{T}_{0}\left| \widetilde{M}_{8}(t)  Y^{\gamma_{i}}_{x}(t,X^{\hat{u}}(t),M_{1}^{\hat{u}}(t))
+ \widetilde{M}_{9}(t) Y^{\gamma_{i}}_{m_{1}}(t,X^{\hat{u}}(t),M_{1}^{\hat{u}}(t))\right|\mathrm{d}t\right]\\
=&\mathbb{E}\left[\int^{T}_{0}\left|\widetilde{M}_{8}(t)(g^{\gamma_{i}}(t))^{\gamma_{i}}\left(X^{\hat{u}}(t)+\beta M_{1}^{\hat{u}}(t)\right)^{-\gamma_{i}}
+\widetilde{M}_{9}(t)\beta(g^{\gamma_{i}}(t))^{\gamma_{i}}\left(X^{\hat{u}}(t)+\beta M_{1}^{\hat{u}}(t)\right)^{-\gamma_{i}}\right|\mathrm{d}t\right]\\
\leq&K\mathbb{E}\left[\int^{T}_{0}\left(X^{\hat{u}}(t)+\beta M_{1}^{\hat{u}}(t)\right)^{1-\gamma_{i}}\mathrm{d}t\right]
\leq K\mathbb{E}\left[\sup_{t\in[0,T]}\left(X^{\hat{u}}(t)+\beta M_{1}^{\hat{u}}(t)\right)^{1-\gamma_{i}}\right]
<\infty,
\end{align*}
which means that $\widetilde{M}_{8}(t)Y^{\gamma_{i}}_{x}(t,X^{\hat{u}}(t),M_{1}^{\hat{u}}(t))+\widetilde{M}_{9}(t)Y^{\gamma_{i}}_{m_{1}}(t,X^{\hat{u}}(t),M_{1}^{\hat{u}}(t)) \in\mathbb{L}^{1}_{\mathcal{F}}(0,T;\mathbb{R})$.  Analogously, it can be shown that $\widetilde{M}_{10}(t)Y^{\gamma_{i}}_{x}(t,X^{\hat{u}}(t),M_{1}^{\hat{u}}(t)) + \widetilde{M}_{11}(t)Y^{\gamma_{i}}_{xx}(t,X^{\hat{u}}(t),M_{1}^{\hat{u}}(t)) \in\mathbb{L}^{1}_{\mathcal{F}}(0,T;\mathbb{R})$ with $\widetilde{M}_{10}(t)=\hat{\pi}(t)(\mu-r)X^{\hat{u}}(t)+a\eta_{2}\hat{q}(t)
$ and $\widetilde{M}_{11}(t)=0.5\left(b^{2}\hat{q}^{2}(t)
+\hat{\pi}^{2}(t)(X^{\hat{u}}(t))^{2}\sigma^{2}\right)$. Moreover, by using a similar proof process as for $Y^{\gamma_{i}}_{t}(t,X^{\hat{u}}(t),M_{1}^{\hat{u}}(t))\in\mathbb{L}^{1}_{\mathcal{F}}(0,T;\mathbb{R})$, one can arrive at
\begin{align*}
&\mathbb{E}\left[\int^{T}_{0}\left|b\hat{q}(t)Y^{\gamma_{i}}_{x}(t,X^{\hat{u}}(t),M_{1}^{\hat{u}}(t))\right|^{2}\mathrm{d}t\right]=
\mathbb{E}\left[\int^{T}_{0}\left|\frac{a \eta_{2}}{b\varpi(t,\Gamma)}(g^{\gamma_{i}}(t))^{\gamma_{i}}
\left(X^{\hat{u}}(t)+\beta M_{1}^{\hat{u}}(t)\right)^{1-\gamma_{i}}\right|^{2}\mathrm{d}t\right]\\
\leq& K\mathbb{E}\left[\sup_{t\in[0,T]}\left(X^{\hat{u}}(t)+\beta M_{1}^{\hat{u}}(t)\right)^{2(1-\gamma_{i})}\right]\leq K\mathbb{E}\left[\|X^{\hat{u}}_{t}\|^{2(1-\gamma_{i})}\right]
<\infty,
\end{align*}
which implies that $b\hat{q}(t)Y^{\gamma_{i}}_{x}(t,X^{\hat{u}}(t),M_{1}^{\hat{u}}(t))\in\mathbb{L}^{2}_{\mathcal{F}}(0,T;\mathbb{R})$. Similarly, $\hat{\pi}(t)X^{\hat{u}}(t)\sigma Y^{\gamma_{i}}_{x}(t,X^{\hat{u}}(t),M_{1}^{\hat{u}}(t))\in\mathbb{L}^{2}_{\mathcal{F}}(0,T;\mathbb{R})$ can be also verified. Ultimately, $Y^{\gamma_{i}}(t,X^{\hat{u}}(t),M_{1}^{\hat{u}}(t))$  fulfils the condition $(A1)$.

What is more, by the equation \eqref{eq28}, one has
\begin{align*}
U(t,x,m_{1})=H(t,x,m_{1})=\int(\varphi^{\gamma})^{-1}\left(Y^{\gamma}(t,x,m_{1})\right)\mathrm{d}\Gamma(\gamma)=\left(x+\beta m_{1}\right)\sum\limits_{i=1}^{n}(g^{\gamma_{i}}(t))^{\frac{\gamma_{i}}{1-\gamma_{i}}}p_{i}.
\end{align*}
In light of $X^{\hat{u}}(\cdot)\in\mathbb{S}^{2}_{\mathcal{F}}(0,T;\mathbb{R})$ and Lemma \ref{lemma3.2}, one can also get that
$X^{\hat{u}}(\cdot),\;M_{1}^{\hat{u}}(\cdot)\in\mathbb{S}^{1}_{\mathcal{F}}(0,T;\mathbb{R})$. Since $g^{\gamma_{i}}(t)$ is deterministic and continuous on $[0,T]$ for $i=1,\cdot\cdot\cdot,n$, we have $U, H\in C^{1,2,1}(\mathcal{\widetilde{D}})$ and
\begin{align*}
\mathbb{E}\left[\int^{T}_{0}\left|U_{t}(t,X^{\hat{u}}(t),M_{1}^{\hat{u}}(t))\right|\mathrm{d}t\right]
\leq K\mathbb{E}\left[\sup_{t\in[0,T]}\left|X^{\hat{u}}(t)\right|+\beta\sup_{t\in[0,T]}\left|M_{1}^{\hat{u}}(t)\right|\right]
<\infty.
\end{align*}
In a similar manner, $\left[(A+\hat{\pi}(t)(\mu-r))X^{\hat{u}}(t)+BM_{1}^{\hat{u}}(t)+CM_{2}^{\hat{u}}(t)+a\eta_{2}\hat{q}(t)\right]U_{x}(t,X^{\hat{u}}(t),M_{1}^{\hat{u}}(t))$ $+0.5(b^{2}\hat{q}^{2}(t)+\hat{\pi}^{2}(t)(X^{\hat{u}}(t))^{2}\sigma^{2})U_{xx}(t,X^{\hat{u}}(t),M_{1}^{\hat{u}}(t))+\left(X^{\hat{u}}(t)-\alpha M_{1}^{\hat{u}}(t)-e^{-\alpha h}M_{2}^{\hat{u}}(t)\right)U_{m_{1}}(t,X^{\hat{u}}(t),M_{1}^{\hat{u}}(t))\in\mathbb{L}^{1}_{\mathcal{F}}(0,T;\mathbb{R})$ can be also proved. Furthermore, because $X^{\hat{u}}(\cdot),M_{1}^{\hat{u}}(\cdot)\in\mathbb{S}^{2}_{\mathcal{F}}(0,T;\mathbb{R})$, we have
\begin{align*}
&\mathbb{E}\left[\int^{T}_{0}\left|b\hat{q}(t)U_{x}(t,X^{\hat{u}}(t),M_{1}^{\hat{u}}(t))\right|^{2}\mathrm{d}t\right]=\mathbb{E}\left[\int^{T}_{0}\left|\frac{a \eta_{2} \left(X^{\hat{u}}(t)+\beta M_{1}^{\hat{u}}(t)\right)}{b\varpi(t,\Gamma)}\sum\limits_{i=1}^{n}(g^{\gamma_{i}}(t))^{\frac{\gamma_{i}}{1-\gamma_{i}}}p_{i}\right|^{2}\mathrm{d}t\right]\\
\leq& K\mathbb{E}\left[\sup_{t\in[0,T]}\left(X^{\hat{u}}(t)+\beta M_{1}^{\hat{u}}(t)\right)^{2}\right]
\leq K\mathbb{E}\left[\sup_{t\in[0,T]}\left|X^{\hat{u}}(t)\right|^{2}
+\beta^{2}\sup_{t\in[0,T]}\left|M_{1}^{\hat{u}}(t)\right|^{2}\right]
<\infty,
\end{align*}
which means that $b\hat{q}(t)U_{x}(t,X^{\hat{u}}(t),M_{1}^{\hat{u}}(t))\in\mathbb{L}^{2}_{\mathcal{F}}(0,T;\mathbb{R})$. Likewise, $\hat{\pi}(t)X^{\hat{u}}(t)\sigma U_{x}(t,X^{\hat{u}}(t),M_{1}^{\hat{u}}(t))\in\mathbb{L}^{2}_{\mathcal{F}}(0,T;\mathbb{R})$. Overall, $U(t,X^{\hat{u}}(t),M_{1}^{\hat{u}}(t))$ meets the condition $(A1)$. Since $U=H$, $H(t,X^{\hat{u}}(t),M_{1}^{\hat{u}}(t))$ also satisfies the condition $(A1)$.
\end{proof}

\end{document}